 \renewcommand{\epsilon}{\varepsilon}
\newtheorem{theorem}{Theorem}[section]
\newtheorem{lemma}{Lemma}[section]
\newtheorem{definition}{Definition}[section]
\newcommand{\bs}[1]{\boldsymbol{#1}}
\newcommand{\cl}[1]{\mathcal{#1}}
\newcommand{\al}{\alpha}
\newcommand{\eps}{\epsilon}
\newcommand{\la}{\lambda}
\newcommand{\Om}{\Omega}
\newcommand{\p}{\partial}
\numberwithin{equation}{section}
\newtheorem{remark}{Remark}[section]
\theoremstyle{remark}
\newcommand{\tr}{\mathrm{tr}\,}
\newcommand{\rar}{\rightarrow}
\newcommand{\bbN}{\mathbb N}
\newcommand{\bbR}{\mathbb R}
\newcommand{\tens}{\otimes}
\author[G. Granados]{Govanni Granados*} \address[G. Granados]{Department of Mathematics, University of North Carolina, Chapel Hill, NC, 27599}
\thanks{*Author to whom any correspondence should be addressed.}
\email{ggranad@unc.edu}
\author[J.\,L. Marzuola]{Jeremy L. Marzuola} \address[J.\,L. Marzuola]{Department of Mathematics, University of North Carolina, Chapel Hill, NC, 27599}
\email{marzuola@math.unc.edu}
\author[C. Rodriguez]{Casey Rodriguez} \address[C. Rodriguez]{Department of Mathematics, University of North Carolina, Chapel Hill, NC, 27599}
\email{crodrig@email.unc.edu}
\begin{document}
\title{Recovering elastic subdomains with strain-gradient elastic interfaces from force measurements: the antiplane shear setting}
\begin{abstract}
We introduce and study a new inverse problem for antiplane shear in elastic bodies with strain-gradient interfaces. The setting is a homogeneous isotropic elastic body containing an inclusion separated by a thin interface endowed with higher-order surface energy. Using displacement–stress measurements on the exterior boundary, expressed through a certain Dirichlet-to-Neumann map, we show uniqueness in recovering both the shear and interface parameters, as well as the shape of the inclusion. 

{\color{black}
To address the inverse shape problem, we adapt the factorization method to account for the complications introduced by the higher-order boundary operator and its nontrivial null space. The resulting characterization relies on pairs of sampling points rather than a single-point test used in classical factorization methods. After fixing an interior reference point, the reconstruction procedure reduces to a single-point sampling algorithm. Focusing on the latter stage, numerical experiments illustrate the feasibility of the proposed reconstruction method and suggest that the framework has potential for the nondestructive detection of interior inhomogeneities, including damaged subvolumes.}
\end{abstract}
\maketitle

\section{\bf Introduction}
In this work, we initiate the study of a novel inverse problem with fundamental applications to nondestructive testing:
\begin{itemize}
	\item \textbf{Problem}: Given a three-dimensional homogeneous, isotropic, linear elastic body $\mathcal O$ with known material properties that contains an unknown, three-dimensional homogeneous, isotropic, linear elastic subregion $\mathcal D \neq \varnothing$, separated from the surrounding medium by a thin interface $\mathcal I$, determine the shape of the inclusion $\mathcal D$ as well as the mechanical properties of both the inclusion and the interface.
\end{itemize}

The available data will consist of boundary measurements in the form of displacement-stress pairs, encoded mathematically by the Dirichlet-to-Neumann map associated with a modification of the standard equations of linearized elasticity, first derived in this work. This system involves a new set of governing partial differential equations and boundary conditions that are of independent mathematical interest. 

As in much of classical physics (see, e.g.,  \cite{christodoulou2000action}), the novel governing system of field equations are derived from a variational perspective wherein we prescribe a Lagrangian action (or total potential energy) and apply Hamilton's principle. Thus, the primitives of the theory are the forms of the stored energy for the domains $\mathcal O \backslash \mathcal D$ and $\mathcal D$ and the interface $\mathcal I$. In this work, we consider the idealized setting of antiplane shear where the energies simplify considerably in complexity. We now describe our mathematical framework in more detail including the form that the energies take. 

\subsection{Stored energy of a three-dimensional body}
Let $\mathcal{B} \subseteq \bbR^3$ be a bounded domain modeling the reference configuration of a homogeneous, isotropic, linear elastic body. Let $\boldsymbol u : \mathcal B \rightarrow \bbR^3$ be a displacement field of $\mathcal B$. It is well-known (see, e.g.,  \cite{GurtinBook81}) that the total stored energy associated to the displacement field is given by 
\begin{align}
	E_{\mathcal B}(\boldsymbol u) = \int_{\mathcal B} \frac{\la}{2} (\mathrm{tr}\, \boldsymbol \epsilon(\boldsymbol u))^2 + \mu |\boldsymbol \epsilon(\boldsymbol u)|^2 \, dV,   \label{eq:3dstored}
\end{align}
where $\lambda, \mu \in \bbR$ are the Lam\'e parameters of the body satisfying the strong ellipticity conditions $\lambda + 2\mu > 0, \mu > 0$ and $\boldsymbol \epsilon$ is the infinitesimal strain tensor 
\begin{gather}
	\boldsymbol \epsilon(\boldsymbol u) = \epsilon_{ij}(\boldsymbol u) \boldsymbol e^i \tens \boldsymbol e^j = \frac{1}{2}\bigl [\nabla \boldsymbol u + (\nabla \boldsymbol u)^T \bigr ], \quad \boldsymbol u = u_i \boldsymbol e^i, \quad \nabla \boldsymbol u = \p_j u_i \boldsymbol e^i \tens \boldsymbol e^j, \\ \mathrm{tr}\, \boldsymbol \epsilon(\boldsymbol u) = \sum_{i=1}^3 \epsilon_{ii}(\boldsymbol u),  \quad |\boldsymbol \epsilon(\boldsymbol u)|^2 = \sum_{i,j=1}^3|\epsilon_{ij}(\boldsymbol u)|^2.   
\end{gather}
Here, repeated upper and lower indices implies summation and $\{ \boldsymbol e_i = \boldsymbol e^i \}_{i = 1}^3$ is a fixed orthonormal basis of $\bbR^3$. 

We now note the equivalence of the classical equations of linearized elasticity and Hamilton's variational principle applied to \eqref{eq:3dstored}. Indeed, suppose that a displacement field $\bs u_0 : \p \cl B \rar \bbR^3$ is given for the boundary of the body. Via integration by parts and the fundamental lemma of the calculus of variations, it follows that the classical placement problem of linearized elasticity  
\begin{align}
\begin{cases}
	\mathrm{div}\, \bs \sigma = \bs 0, \quad \mbox{on } \cl B,  \\
	\bs u = \bs u_0, \quad \mbox{on } \p \cl B,
\end{cases}
\end{align}
where $\bs \sigma = \la (\tr \eps(\bs u))\bs I + 2\mu \bs \eps(\bs u)$ is the stress tensor and $\mathrm{div}\, \bs \sigma = \sum_{i,j = 1}^3 \p_j \sigma_{ij} \bs e^i$ is equivalent to the following form of \textit{Hamilton's principle}: find $\bs u : \cl B \rar \bbR^3$ satisfying $\bs u = \bs u_0$ on $\p \cl B$ such that
\begin{align}
 \forall \bs \varphi \in C^\infty_c(\cl B), \quad	\frac{d}{d{\color{black}\tau}} E_{\cl B}(\bs u + {\color{black}\tau} \bs \varphi) \Big |_{{\color{black}\tau} = 0} = 0. 
\end{align} 

\subsection{Stored energy of the interface}
Returning to the setting of this work, we model the thin interface separating the inclusion $\cl D$ from the rest of the body $\cl O \backslash \cl D$ by a material surface $\mathcal I$ bonded to the boundary of the inclusion rather than a third three-dimensional domain, i.e., $\mathcal I$ and $\mathcal D$ form a \textit{surface-substrate system}. Mathematically, we prescribe a two dimensional stored energy $U$ depending on infinitesimal geometric strains associated to the surface $\mathcal I = \p \mathcal D$. The general theory of surface-substrate systems and interactions has been a source of intensive research over the past 50 years since the seminal works by Gurtin and Murdoch \cite{GurtinMurd75, GurtinMurd78} and Steigmann and Ogden \cite{SteigOgden97a, SteigOgden99}, and a review of the theoretical advances and applications following these works would not be possible here.    

We now describe the form of the stored energy for the interface $\p \cl D$ that we consider in this study. Let $\boldsymbol r: U \rightarrow \p \mathcal D$ be a local parameterization of $\p \mathcal D$. Here $U \subseteq \mathbb R^2$ is an open set with coordinates $(\theta^\al)$. We denote the corresponding natural basis vectors by $\boldsymbol A_\al := \boldsymbol r_{,\al}$ and the dual basis vectors by $\boldsymbol A^\al$ respectively. Here $\mbox{}_{,\al} = \p_{\theta^\al}$. The components of the metric are given by 
\begin{align}
	A_{\al \beta} := \boldsymbol A_{\al} \cdot \boldsymbol A_{\beta},
\end{align}  
the determinant of $(A_{\al \beta})$ is denoted by $A$, and the dual components are denoted by $(A^{\al \beta})$, so $A_{\al \gamma}A^{\gamma \beta} = \delta_\al^\beta$. The components of surface tensors are raised and lowered using $(A^{\al\beta})$ and $(A_{\al \beta})$ respectively. For a vector field $\boldsymbol u$ on $\p \mathcal D$ we define  
\begin{align}
\boldsymbol u_{\al|\beta} := \boldsymbol u_{,\al\beta} - \Gamma^{\gamma}_{\al \beta} \boldsymbol u_{,\gamma},
\end{align} 
where $(\Gamma^\gamma_{\beta\al})$ are the Christoffel symbols associated to $(A_{\al\beta})$. For a scalar valued function $\varphi$ on $\p \mathcal D$, its surface gradient is defined via $$\nabla_s \varphi = \varphi_{,\al} \boldsymbol A^\al,$$ and for a vector or tensor field $\boldsymbol E$ on $\p \mathcal D$, its surface gradient is defined via  
\begin{align}
	\nabla_s \boldsymbol E := \boldsymbol E_{,\al} \tens \boldsymbol A^\al. 
\end{align} 

For a displacement vector field $\boldsymbol u$ on $\p \mathcal D$, we define the \textit{infinitesimal surface strain tensor} 
\begin{align}
		\boldsymbol \epsilon_s(\boldsymbol u) := \frac{1}{2}\bigl (\boldsymbol A_\al \cdot \boldsymbol u_{,\beta} + \boldsymbol u_{,\al} \cdot \boldsymbol A_{,\beta} \bigr ) \boldsymbol A^\al \tens \boldsymbol A^\beta, 
\end{align}
and the \textit{infinitesimal relative normal curvature tensor} 
\begin{align}
	\boldsymbol \kappa_s(\boldsymbol u) := \boldsymbol N \cdot \boldsymbol u_{,\al|\beta} \boldsymbol A^\al \tens \boldsymbol A^\beta,
\end{align}
where $\bs N$ is the unit outward normal along $\p \cl D$.
We note that both $\boldsymbol \epsilon_s(\boldsymbol u)$ and $\boldsymbol \kappa_s(\boldsymbol u)$ are linear in $\boldsymbol u$. Physically, $\boldsymbol \epsilon_s(\boldsymbol u)$ is an infinitesimal measure of local stretching of $\p \mathcal D$ and $\boldsymbol \kappa_s(\boldsymbol u)$ is an infinitesimal measure of the change of curvature for $\p \mathcal D$ (see, e.g., \cite{steigmann2023lecture}). 

For the interface $\p \mathcal I = \p \mathcal D$, we prescribe a uniform hemitropic\footnote{For a discussion of hemitropy and the general theory of material symmetry, see, e.g. \cite{MurdochCohen80, MurdCohenAdd81, SteigOgden99, rodriguez2024elastic}.} energy $E_{\p \mathcal D}$, 
\begin{gather}
	E_{\p \mathcal D}(\boldsymbol u) = \int_{\p \mathcal D}  \left[  \frac{\la_s}{2}(\mathrm{tr}\, \boldsymbol \epsilon_s(\boldsymbol u))^2 + \mu_s |\boldsymbol \epsilon_s(\boldsymbol u)|^2 + \ell_s^2 \Bigl [ \frac{\la_s}{2}|\nabla_s \mathrm{tr}\, \boldsymbol \epsilon_s(\boldsymbol u)|^2 + \mu_s |\nabla_s \boldsymbol \epsilon_s(\boldsymbol u)|^2 \Bigr ] \right. \\
	\left. \quad + \frac{\zeta}{2}(\mathrm{tr}\, \boldsymbol \kappa_s(\boldsymbol u))^2 + \eta |\boldsymbol \kappa_s(\boldsymbol u)|^2 \right] \, dA,   \label{eq:surfacestoredenergy}
\end{gather}
where the surface constants satisfy the strong ellipticity conditions: 
\begin{gather}
	\la_s + 2\mu_s > 0, \mu_s > 0, \quad \ell_s > 0, \quad \zeta + 2\eta > 0, \eta > 0.  \label{eq:strongellipt}
\end{gather}

The surface-stored energy \eqref{eq:surfacestoredenergy} provides a versatile framework for modeling interface phenomena. When $\ell_s = \zeta = \eta = 0$, the resulting energy reduces to that of the Gurtin–Murdoch theory of surface-substrate interactions \cite{GurtinMurd75, GurtinMurd78}, in which the surface behaves as a membrane-like interface along the bulk's boundary. Setting $\ell_s = 0$ and assuming $\zeta + 2\eta > 0$ and $\eta > 0$, we recover the quadratic Steigmann–Ogden surface energy \cite{SteigOgden97a, SteigOgden99}, which models a shell- or plate-like interface similarly bonded to the boundary of $\mathcal D$. A more general theory, in which the surface energy also depends on surface strain gradients, was developed by Rodriguez in \cite{rodriguez2024elastic}, where it was shown that these higher-order effects confer resistance to geodesic distortion, that is, the deformation of geodesics into non-geodesics. Furthermore, in \cite{rodriguez2024elastic, rodriguez2024midsurface}, it was demonstrated that such surface-substrate models regularize singular behavior that arises in classical linearized elasticity, notably in the setting of mode-III antiplane shear loading for finite-length cracks, something neither the Gurtin–Murdoch nor Steigmann–Ogden models achieve. Finally, it was shown in \cite{researchplayground25} that for infinitesimal displacement gradients, \eqref{eq:surfacestoredenergy} arises as the leading cubic order-in-thickness contribution to the integrated-through-thickness stored energy for certain homogenous, isotropic, three-dimensional strain-gradient elastic layers. These results support the use of \eqref{eq:surfacestoredenergy} as a well-justified and physically meaningful model for interfaces in bodies that include failure related mechanisms.

It is worth emphasizing that both the Steigmann–Ogden and Rodriguez surface-substrate models incorporate energies involving second-order derivatives of the displacement field, distinguishing them from the classical Gurtin–Murdoch model and standard practice in classical continuum mechanics. The inclusion of such higher-order terms places these models within the broader framework of second-gradient and generalized continuum theories, a field with a deep and evolving history. The origins trace back to the work of Piola in 1846 \cite{Piola1846Book, dellIsola15} and were significantly developed by the Cosserat brothers \cite{Cosserats1909}, who introduced additional rotational degrees of freedom at each material point. The second half of the 20th century witnessed major advances through the seminal contributions of Toupin \cite{Toupin62, Toupin64}, Green and Rivlin \cite{GreenRivlin64a, GreenRivlin64b}, Mindlin \cite{Mindlin64a, Mindlin1965}, Mindlin and Eshel \cite{MindlinEshel1968}, and Germain \cite{Germain73a, Germain73b}. For modern perspectives and overviews of the field, see, for example, \cite{Askes2011, dellIsola17, Maugin17Book, dellIsola2020higher}. These developments form the foundation for contemporary higher-order theories, including the surface models used in this work.

\subsection{The governing field equations for antiplane shear and the associated inverse problem}
The field equations governing the equilibrium configuration of the body $\cl O$ with inclusion $\cl D$, interface $\p \cl D$, and prescribed boundary displacement $\bs u_0 : \p \cl D \rar \bbR^3$ are derived via Hamilton's principle: find $\bs u : \cl O \rar \bbR^3$ such that $\bs u = \bs u_0$ on $\p \cl O$ and
\begin{align}
	\forall \bs \varphi \in C^\infty_c(\cl B), \quad	\frac{d}{d{\color{black}\tau}} \Bigl [ E_{\cl O \backslash \cl D}(\bs u + {\color{black}\tau} \bs \varphi) +  E_{\cl D}(\bs u + {\color{black}\tau} \bs \varphi) + E_{\p \cl D}(\bs u + {\color{black}\tau} \bs \varphi) \Bigr ] \Big |_{{\color{black}\tau} = 0} = 0. \label{eq:Hamilton}
\end{align}
 
In this work, we consider the simpler idealized setting of antiplane shear wherein there exist $h > 0$, and smooth domains
\begin{align}
	\cl O = \Omega \times (-h,h), \quad \cl D = D \times (-h,h)
\end{align} 
and all displacement fields take the form 
\begin{align}
	\bs u = u(x_1, x_2) \bs e_3. 
\end{align}
We assume that $\p D$ is parameterized by arclength, $s$, with unit outward normal $\bs \nu$. Then $\p \cl D = \p D \times (-h,h)$ is parameterized by $(s,x_3)$ with outward unit normal $\bs N = \bs \nu$. It follows that  
\begin{gather}
	\boldsymbol \eps(\bs u) = \frac{1}{2}\p_1 u (\bs e_1 \tens \bs e_3 + \bs e_3 \tens \bs e_1) + \frac{1}{2}\p_2 u (\bs e_2 \tens \bs e_3 + \bs e_3 \tens \bs e_2), \\
	\boldsymbol \eps_s(\bs u) = \frac{1}{2}\p_s u (\bs t \tens \bs e_3 + \bs e_3 \tens \bs t). 
\end{gather} 

We insert antiplane shear displacements into \eqref{eq:Hamilton} to obtain the field equations governing equilibrium configurations for antiplane shears of $\cl O$. In particular, \textit{Hamilton's principle for antiplane shear displacements} is given by: find $u : \Omega \rar \bbR$ such that $u = f$ on $\p \Omega$ and  
\begin{equation} 
\begin{gathered} 
	\forall \varphi \in C^\infty_c(\Omega), \quad	\frac{d}{d{\color{black}\tau}} \left [ \int_{\Omega \backslash D} \frac{\mu_o}{2}|\nabla u + {\color{black}\tau} \nabla \varphi|^2 \, dx_1 \, dx_2 +  \int_{D} \frac{\mu_i}{2}|\nabla u + {\color{black}\tau} \nabla \varphi|^2 \, dx_1dx_2 \right. \\ 
	\left. + \int_{\p D} \bigl ( \frac{\mu_s}{2}|\p_s u + {\color{black}\tau} \p_s \varphi|^2 + \frac{\ell_s^2 \mu_s}{2} |\p_s^2 u + {\color{black}\tau} \p_s^2 \varphi|^2 \bigr) \, ds \right ] \bigg |_{{\color{black}\tau} = 0} = 0. \label{eq:Hamiltonantiplane}
\end{gathered} 
\end{equation}
Here $\mu_o$ and $\mu_i$ are the shear moduli of the known material parameterized by $\Omega \backslash D$ and $D$, respectively. By an appropriate nondimensionalization, we may set $\mu_o = 1$ and we denote $\mu_i$ by $\mu > 0$. In particular, it is important to note the following physical interpretations:
\begin{itemize}
\item If $\mu > 1$, then the shear modulus of the inclusion is larger than that of the bulk and $\cl D$ is stiffer than $\cl O \backslash \cl D$. Thus, $\cl D$ is composed of a ``stronger'' material than $\cl O \backslash \cl D$. 
\item If $\mu < 1$, then the shear modulus of the inclusion is smaller than that of the bulk and $\cl D$ is more pliable than $\cl O \backslash \cl D$. Thus, $\cl D$ is composed of a ``weaker'' material than $\cl O \backslash \cl D$. For example, $\cl D$ could model a \textit{damaged} portion of $\cl O$. 
\end{itemize}

{\color{black}For what follows we define 
\begin{equation}
[\![\partial_\nu u ]\!] := (\partial_{\nu} u^{+} - \mu\partial_{\nu} u^{-})\big\rvert_{\partial D}
\end{equation}
where the `$+$' symbol represents the trace on $\partial D$ from $\Omega \backslash \overline{D}$ and `$-$' represents the trace on $\partial D$ from $D$.} Via straightforward integration by parts and the fundamental lemma of the calculus of variations, we see that \eqref{eq:Hamiltonantiplane} holds if and only if
\begin{equation}\label{bvp}
	\begin{cases} 
		- \Delta u = 0 & \text{in} \enspace \Omega \backslash D, \\
		-\mu \Delta u = 0 & \text{in} \enspace D ,\\
		[\![\partial_\nu u ]\!] = \mathscr{B}(u)  & \text{on} \enspace \partial D,\\
		u = f & \text{on} \enspace \partial \Omega,
	\end{cases}
\end{equation}
where the boundary operator $\mathscr{B}$ is defined as
\begin{equation} \label{b-op}
	\mathscr{B}(u) =  \partial^{2}_{s} (\mu_s \ell_s^2 \partial^{2}_{s} u) - \partial_{s}(\mu_s \partial_{s} u).
\end{equation}
We abuse notation slightly and denote the outward unit normal on $\p \Omega$ also by $\bs \nu = \nu^1 \bs e_1 + \nu^2 \bs e_2$. Then we have that the stress along $\p \Omega$ necessary to support the boundary displacement $f$ is given by 
\begin{align}
	\bigl [
	\la_o (\tr \bs \eps(\bs u)) \bs I + 2 \bs \eps(\bs u)
	\bigr ] \bs \nu = [\nu^1 \p_1 u + \nu^2 \p_2 u]\bs e_3 = \p_\nu u \bs e_3.
\end{align}
Thus, the Dirichlet-to-Neumann map (DtN) $\Lambda(f) = \p_\nu u$ maps boundary displacement to the stress (magnitude) necessary to support it. The problem \eqref{bvp} represents the direct or forward problem: given a boundary displacement $f$, determine the resulting displacement of the body containing the inclusion and interface. The inverse problem we consider in this work can then be stated as: 
\begin{itemize}
    \item \textbf{Problem}: Determine the shape and mechanical parameters $\mu$, $\mu_s,$ and $\ell_s$ for $D$ using boundary displacement-stress information provided by the Dirichlet-to-Neumann map $\Lambda$.
\end{itemize}
Our shape reconstruction results will focus on the comparison between force measurements when an inclusion exists versus when it does not, the latter set of measurements considered known a priori. Hence, we also introduce the standard, unperturbed Dirichlet problem
\begin{equation}\label{lifting}
	\begin{cases} 
		- \Delta u_0 = 0 & \text{in} \enspace \Omega, \\
		u_0 = f & \text{on} \enspace \partial \Omega,
	\end{cases}
\end{equation} 
and the corresponding DtN operator defined as $\Lambda_0(f) = \p_\nu u_0$.

The main contributions are the resolution of the previously stated problem and are summarized as follows. 
\begin{theorem} Consider the elliptic boundary value problem \eqref{bvp} on a bounded smooth domain $\Omega \subset\mathbb{R}^2$ with a smooth subdomain (inclusion) $D \subset \Omega$ such that $\partial D \cap \partial \Omega = \varnothing$.  Then, the following statements hold. 
\begin{enumerate} 
    \item  The DtN map $\Lambda$ uniquely determines the coefficients $\mu, \mu_s$, and $\ell_s$.

    \item The difference of the DtN maps, $\Lambda - \Lambda_0$, uniquely reconstructs the inclusion $D$.
\end{enumerate}
\end{theorem}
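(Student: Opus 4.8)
The plan is to treat the two assertions on a shared variational backbone and then diverge: assertion (1) is a boundary-determination argument driven by unique continuation and high-frequency probing on $\partial D$, while assertion (2) is a range-characterization argument via the factorization method. The common starting point is the weak form of \eqref{bvp}: find $u \in H^1(\Omega)$ with $u|_{\partial D} \in H^2(\partial D)$ and $u|_{\partial\Omega} = f$ satisfying
\[
a(u,\varphi) := \int_{\Omega\setminus D}\nabla u\cdot\nabla\varphi + \mu\int_D \nabla u\cdot\nabla\varphi + \int_{\partial D}\bigl(\mu_s\,\partial_s u\,\partial_s\varphi + \mu_s\ell_s^2\,\partial_s^2 u\,\partial_s^2\varphi\bigr)\,ds = 0
\]
for all admissible $\varphi$ vanishing on $\partial\Omega$, together with the energy identity $\langle \Lambda f, g\rangle_{\partial\Omega} = a(u,v)$ whenever $v|_{\partial\Omega} = g$. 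I would first record that the displacement is continuous across $\partial D$ (since $u$ is a single $H^1(\Omega)$ field), so $u^+ = u^-$ on $\partial D$, and that the interior equation is just $\Delta u = 0$ in $D$ (the factor $\mu$ drops out), so $\mu$ enters only through the transmission condition \eqref{b-op}.

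For assertion (1), suppose two parameter triples produce the same map $\Lambda$. Since $\Lambda_0$ is fixed, equal $\Lambda$ forces equal $\Lambda - \Lambda_0$, so by assertion (2) the inclusions coincide; I therefore treat $D$ as known. Fixing $f$ and letting $u_1,u_2$ be the two solutions, both are harmonic in $\Omega\setminus\overline D$ with identical Cauchy data $(f,\Lambda f)$ on $\partial\Omega$, so unique continuation gives $u_1 = u_2$ on $\Omega\setminus\overline D$; in particular $u_1^+ = u_2^+$ and $\partial_\nu u_1^+ = \partial_\nu u_2^+$ on $\partial D$. Continuity of displacement matches the interior Dirichlet data, and since both are harmonic in $D$ they agree there too, giving a common field $u$ with common interior Neumann trace $\partial_\nu u^-$. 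Subtracting the two transmission conditions yields, for every attainable trace $g = u|_{\partial D}$,
\[
(\mu_2 - \mu_1)\,\partial_\nu u^- + (\mu_{s,1}-\mu_{s,2})\,\partial_s^2 g - (\mu_{s,1}\ell_{s,1}^2 - \mu_{s,2}\ell_{s,2}^2)\,\partial_s^4 g = 0 \quad\text{on }\partial D,
\]
where $\partial_\nu u^- = \Lambda_D^{\mathrm{int}}(g)$ is the interior Dirichlet-to-Neumann image of $g$ for $-\Delta$ on $D$. Probing with high-frequency traces $g \approx e^{iks}$ (justified by Runge approximation of admissible traces on $\partial D$) and using the symbol expansion $\Lambda_D^{\mathrm{int}} \sim |\partial_s|$, the three terms scale as $|k|$, $k^2$, and $k^4$; the distinct orders force the coefficients of $k^4$, then $k^2$, then $|k|$ to vanish, giving $\mu_{s,1}\ell_{s,1}^2 = \mu_{s,2}\ell_{s,2}^2$, $\mu_{s,1}=\mu_{s,2}$, and $\mu_1=\mu_2$, hence recovering $\mu,\mu_s,\ell_s$.

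For assertion (2), I would adapt Kirsch's factorization method to $N := \Lambda - \Lambda_0$. Using the energy identity one writes $N = G^* T G$, where $G$ sends boundary data $f$ to the relevant interface quantity of $u_0$ on $\partial D$ and $T$ encodes the bulk-contrast-plus-interface response of the transmission problem. The central analytic tasks are: (i) to show that $T$ is, up to a compact perturbation, coercive on the natural energy space over $\partial D$, and (ii) to deduce the range identity $\mathcal R(|N|^{1/2}) = \mathcal R(G^*)$. One then characterizes the inclusion through a family of singular test functions $\Phi_z$ (the $\partial\Omega$-traces of fundamental solutions or dipoles centered at $z$): the point $z$ lies in $D$ if and only if $\Phi_z \in \mathcal R(|N|^{1/2})$, equivalently the Picard series in the eigensystem of $N$ converges. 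This yields a pointwise test reconstructing $\partial D$, and hence $D$.

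The principal obstacle is flagged already in the abstract: the interface operator $\mathscr B$ has a nontrivial kernel --- on the closed curve $\partial D$ its null space is exactly the constants, since $\mathscr B(u) = 0$ forces $\partial_s^2 u = 0$ and thus $u$ constant. This degeneracy breaks the naive coercivity of $T$ and must be removed by working modulo constants (or on a mean-zero complement) and absorbing the finite-dimensional defect into the compact part of the factorization; verifying that the range identity and the test-function characterization survive this quotient is the delicate step. A secondary technical point, shared with assertion (1), is the rigorous justification of the high-frequency and Runge probing in the nonstandard $H^2(\partial D)$ energy setting forced by the $\partial_s^2$ term, where one must control error terms in both the interior DtN symbol expansion and the trace approximation.
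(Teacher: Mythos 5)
Your proposal is correct in outline and follows the same two\--pronged strategy as the paper: for the parameters, unique continuation from Cauchy data on $\partial \Omega$, subtraction of the two transmission conditions on $\partial D$, density of attainable traces, and the decomposition $\Lambda_D = \sqrt{-\partial_s^2} + B$ (with $B$ of order zero) to separate the frequency scalings $|k|$, $k^2$, $k^4$; for the shape, a symmetric factorization of $\Lambda - \Lambda_0$, coercivity of the middle operator modulo constants, a range identity, and a Picard\--type test with the Neumann trace of the Dirichlet Green's function. Two differences from the paper are worth recording. First, you derive assertion (1) from assertion (2) in order to identify $D$; the paper instead proves assertion (1) independently, for a given $\partial D$ (Theorem \ref{thm:mechprecise}), via a duality trick: the operator $\mathscr{L}$ built from the coefficient differences is symmetric, so $\int_{\partial D}\phi\,\mathscr{L}u\,\mathrm{d}s = \int_{\partial D}u\,\mathscr{L}\phi\,\mathrm{d}s$, and density of the solution traces (Lemma \ref{s-dense-range}) forces $\mathscr{L}\phi = 0$ for \emph{every} $\phi \in H^2(\partial D)$, after which one tests with exact eigenfunctions of $-\partial_s^2$ and no error control for approximate high\--frequency probes is needed. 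Your routing also makes assertion (1) inherit the hypothesis $\mu > \mu_0$, since the coercivity of $T$ on mean-zero functions (Theorem \ref{t-coercive}) \--- and hence the shape reconstruction \--- is only established above that threshold; the paper's independent argument for the parameters carries no such restriction, so its version of (1) is strictly stronger than what your reduction yields. Second, the step you correctly flag as delicate (whether the range identity and the Green's-function test survive the quotient by constants) is precisely where the paper's new work lies: it is resolved by computing $S^{*}1 = |\partial D|/|\partial \Omega|$ (Lemma \ref{adj_s_1}) and proving the equivalence that $\partial_\nu \mathbb{G}(\cdot,z)\big\rvert_{\partial \Omega} \in \mathrm{Range}(S^{*})$ if and only if its mean\--subtracted version lies in $\mathrm{Range}\big(S^{*}\big\rvert_{\mathrm{span}\{1\}^{\perp}}\big)$ (Theorem \ref{mean-zero-range}), which is then combined with the range identity for $(PS)^{*}$; your plan names this obstacle but does not supply the mechanism, so if you complete the write\--up this is the step that requires genuinely new argument rather than citation of the standard factorization literature.
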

The first part of the above theorem shows that the measured shear force on the known boundary $\partial \Omega$ uniquely determines the system parameters; a more detailed version is presented in Theorem \ref{thm:mechprecise}. The second part shows that a specific comparison of shear forces on $\partial \Omega$ uniquely recovers the location and shape of the inclusion with a more precise version given in Theorem \ref{main-thm}. To this end, we will derive a qualitative sampling algorithm to determine $D$ without the knowledge of the system parameters $\mu, \mu_s$, and $\ell_s$. 

\begin{remark}
 Our assumptions on the smoothness of the boundary of $D$ can likely be weakened. Our arguments require only the validity of Green's identities and the well-definedness of the Dirichlet-to-Neumann maps, so substantially lower boundary regularity should suffice. We adopt the present smoothness assumptions only to simplify the exposition.  
\end{remark}

\subsection{Qualitative, shape reconstruction method} To solve the inverse shape problem for extended inclusions, we consider a qualitative reconstruction method. This family of methods is advantageous in the sense that they require little a priori knowledge of the unknown regions. In contrast, iterative methods require ``good'' initial estimates for the unknown region and/or parameters to insure that the iterative process will converge to the unique solution. The specific qualitative method that we will derive is a variant of the so called {\it regularized factorization method}. This method is based on the analysis of the linear sampling method {\color{black}(see e.g. \cite{cakoni-colton-2003, cakoni-colton-haddar, colton-haddar-monk, colton-haddar-piana})} using measurements involving the Dirichlet-to-Neumann map as in \cite{cheney2001thelinear,kirsch2002music}. Factorization-type methods have been extensively studied for inverse shape problems arising in electrostatics and inverse scattering; see, for example, \cite{harris0,harris1,kirsch2005factorization,kirsch2007factorization} and the references therein. {\color{black} A central feature of these methods is the characterization of the unknown inclusion through singular solutions, each associated with a single sampling point.} In this work, we adapt this philosophy to a fundamentally different setting: an inverse problem in continuum mechanics governed by elastic equilibrium equations coupled to a higher-order strain-gradient interface model.

{\color{black} However, the present problem differs from the classical setting in two fundamental respects. First, the data operator possesses a nontrivial null space, inherited from the fourth-order boundary operator $\mathscr{B}$ defined in \eqref{b-op}, whose kernel consists of the constant functions. Second, the strain-gradient surface energy introduces higher-order boundary terms, requiring substantially greater elliptic regularity than in the classical theory. These features fundamentally alter the structure of the factorization method. In particular, the one-dimensional null space prevents the standard characterization of the inclusion using singular solutions, each centered at a single sampling point. Instead, we obtain a characterization in terms of differences of singular solutions, leading naturally to a two-point sampling procedure for reconstructing the inclusion.}


The regularized factorization method relies on connecting the inclusion to the range of an operator derived from measured data. This will be accomplished by characterizing the unknown inclusion $D$ by the spectral decomposition of the compact, data operator $\Lambda - \Lambda_0$. This makes the numerical implementation of these methods computationally simple since one only needs to compute the singular value decomposition of the discretized operator. From a computational perspective, this approach is more cost effective compared to deriving an iterative method, which may require solving one or more adjoint problems at each iteration.

\subsection{Outline}

We first establish the existence, uniqueness and regularity of solutions to the direct problem \eqref{bvp} in Section \ref{sec:direct}.  Then, we provide the key factorization results of $\Lambda-\Lambda_0$ in Section \ref{sec:factorization}. This is required to prove the main results. The parameter determination for a given domain is established in Section \ref{inverse-parameter-problem}. The domain determination result given fixed parameters is then given in Section \ref{inverse-shape-problem}.  Some supporting numerics are given in Section \ref{numerics}, and concluding remarks are given in Section \ref{sec:conclusion}. Some of the more technical aspects of Section \ref{sec:factorization} are given in Appendix \ref{app:a}, and some technical analysis related to Section \ref{numerics} are given in Appendix \ref{appendix-kernel}. 

\subsection*{Acknowledgments} G.G. was supported by NSF RTG DMS-2135998.  J.L.M. acknowledges support from NSF grant DMS-2307384. C. R. acknowledges support provided by NSF DMS-2307562.

\section{\bf The Direct Problem}
\label{sec:direct}

In this section we study the well-posedness of the forward problem \eqref{bvp}. Given a bounded, smooth domain $\Omega \subset \mathbb{R}^2$ and an smooth interior region $D$, we assume that dist$(\partial \Omega , \overline{D}) > 0$ and denote any coordinate pair as a single variable, e.g. $x$. Due to the fourth-order boundary operator \eqref{b-op} on $\partial D$, we consider finding the solution $u \in \mathcal{H}$ to \eqref{bvp} for a given $f \in H^{1/2}(\partial \Omega)$. Here the solution space is the space defined as
$$\mathcal{H} = \{u \in H^{1}(\Omega) \mid u \big \rvert_{\partial D} \in H^{2}(\partial D)\},$$
equipped with the seminorm
$$\norm{u}^{2}_{\mathcal{H}} :=  \norm{\nabla u}^{2}_{L^{2}(\Omega)} + \|\partial^{2}_{s} u \|^{2}_{L^{2}(\partial D)} +\norm{\partial_{s} u}^{2}_{L^{2}(\partial D)} .$$
We note that if $v \in \cl H \cap H^1_0(\Omega)$, then by the Poincar\'e inequality, there exists a constant $C>0$ such that 
\begin{align}
 \| v \|_{H^1(\Omega)}^2 + \|\partial^{2}_{s} u \|^{2}_{L^{2}(\partial D)} +\norm{\partial_{s} u}^{2}_{L^{2}(\partial D)} \leq C \| v \|_{\cl H}^2, 
\end{align}
i.e. $\| \cdot \|_{\cl H}$ is a norm on $\cl H \cap H^1_0(\Omega)$, turning $\cl H \cap H^1_0(\Omega)$ into a Hilbert space. 
To show well-posedness of \eqref{bvp}, we rederive its variational formulation in detail here.
{\color{black}\begin{lemma}\label{vf-bvp}
The variational formulation for \eqref{bvp} is: find $u \in \mathcal{H}$ such that 
\begin{equation}\label{var-form}
     \int_{\Omega \setminus D} \nabla u \cdot \nabla {\varphi} \, \text{d}x + \int_{D} \mu \nabla u \cdot \nabla {\varphi} \, \text{d}x + \int_{\partial D} \mu_s \partial_{s} u \partial_{s} {\varphi} \, \text{d}s + \int_{\partial D}  \mu_s \ell_s^2 \partial^{2}_{s} u  \partial^{2}_{s}  {\varphi} \, \text{d}s = 0
\end{equation}
for all $\varphi \in \mathcal{H} \cap H^{1}_{0}(\Omega)$.

\end{lemma}
\begin{proof}

Note that
$$\int_{\Omega \setminus D} - \Delta u \, {\varphi} \, \text{d}x + \int_{D} - \mu \Delta u \, {\varphi} \, \text{d}x = 0$$
for any $\varphi \in C^\infty_c(\Omega)$, where $u$ satisfies \eqref{bvp}. We will use Green's first identity for integrals defined on $\Omega \setminus D$ and $D$. In $\Omega \setminus {D}$,
$$-\int_{\Omega \setminus {D}} \Delta u \, {\varphi} \, \text{d}x = \int_{\Omega \setminus {D}} \nabla u \cdot \nabla {\varphi} \, \text{d}x - \int_{\partial \Omega} {\varphi} \partial_{\nu} u \, \text{d}s + \int_{\partial D} {\varphi} \partial_{\nu} u^{+} \, \text{d}s.$$
Since $\varphi \in \mathcal{H} \cap H^{1}_{0}(\Omega)$, then we get that
$$-\int_{\Omega \setminus {D}} \Delta u\, {\varphi} \, \text{d}x = \int_{\Omega \setminus {D}} \nabla u \cdot \nabla {\varphi} \, \text{d}x + \int_{\partial D} {\varphi} \partial_{\nu} u^{+} \, \text{d}s.$$
Similarly, in $D$ we have that
$$-\int_{D} \mu \Delta u {\varphi} \, \text{d}x = \int_{D} \mu \nabla u \cdot \nabla {\varphi} \, \text{d}x - \int_{\partial D} {\varphi} \mu \partial_{\nu}u^{{-}} \, \text{d}s.$$
Recall that  $-\Delta u = 0$ in $\Omega \setminus D$ and $-\mu \Delta u = 0$ in $D$. Thus, in the entire region $\Omega$ we have that
\begin{align*}
    0 &= \int_{\Omega \setminus D} \nabla u \cdot \nabla {\varphi} \, \text{d}x + \int_{D} \mu \nabla u \cdot \nabla {\varphi} \, \text{d}x + \int_{\partial D} (\partial_{\nu} u^{+} - \mu \partial_{\nu} u^{-} ){\varphi} \, \text{d}s \\
    &= \int_{\Omega \setminus D} \nabla u \cdot \nabla {\varphi} \, \text{d}x + \int_{D} \mu \nabla u \cdot \nabla {\varphi} \, \text{d}x + \int_{\partial D}  [\![\partial_\nu u ]\!] {\varphi} \, \text{d}s.
\end{align*}

\noindent Using the boundary condition on $\partial D$, we get that
\begin{align*}
    0 &= \int_{\Omega \setminus D} \nabla u \cdot \nabla {\varphi} \, \text{d}x + \int_{D} \mu \nabla u \cdot \nabla {\varphi} \, \text{d}x  + \int_{\partial D} \mathscr{B}(u) {\varphi} \, \text{d}s \\
    &= \int_{\Omega \setminus D} \nabla u \cdot \nabla {\varphi} \, \text{d}x + \int_{D} \mu \nabla u \cdot \nabla {\varphi} \, \text{d}x + \int_{\partial D}  \mu_s \ell_s^2 \partial^{2}_{s} u  \partial^{2}_{s}  {\varphi} \, \text{d}s + \int_{\partial D} \mu_s \partial_{s} u \partial_{s} {\varphi} \, \text{d}s ,
\end{align*} 
where we used the fact that $\partial D$ is a closed curve in using integration by parts. Thus, the variational formulation for \eqref{bvp} is: find $u \in \mathcal{H} \cap H^{1}_{0}(\Omega)$ such that 
     \[\int_{\Omega \setminus D} \nabla u \cdot \nabla {\varphi} \, \text{d}x + \int_{D} \mu \nabla u \cdot \nabla {\varphi} \, \text{d}x + \int_{\partial D} \mu_s \partial_{s} u \partial_{s} {\varphi} \, \text{d}s + \int_{\partial D}  \mu_s \ell_s^2 \partial^{2}_{s} u  \partial^{2}_{s}  {\varphi} \, \text{d}s = 0 \]
     for all $\varphi \in \mathcal{H} \cap H^{1}_{0}(\Omega)$.
\end{proof}}
We now consider the extension of $f$, $u_0 \in H^{1}(\Omega)$, satisfying the Dirichlet problem \eqref{lifting}. Thus, $u_0$ exists, is unique, and continuously depends on $f$, i.e. $\norm{u_0}_{H^{1}(\Omega)} \leq C \norm{f}_{H^{1/2}(\partial \Omega)}$.  Moreover, by elliptic regularity, we can conclude  
\begin{equation}\label{u_0-H-norm}
    \norm{u_0}_{\mathcal{H}} \leq C \norm{f}_{H^{1/2}(\partial \Omega)},
\end{equation}
where $C>0$ is an absolute constant. 

We define $v \in \mathcal{H} \cap H^{1}_{0}(\Omega)$ {\color{black} where} $u = v + u_0$. Then the variational formulation \eqref{var-form} becomes: 
\begin{multline}\label{var-form-v}
     \int_{\Omega \setminus D} \nabla v \cdot \nabla {\varphi} \, \text{d}x + \int_{D} \mu \nabla v \cdot \nabla {\varphi} \, \text{d}x + \int_{\partial D} \mu_s \partial_{s} v \partial_{s} {\varphi} \, \text{d}s + \int_{\partial D}  \mu_s \ell_s^2 \partial^{2}_{s} v  \partial^{2}_{s}  {\varphi} \, \text{d}s \\ = - \int_{\Omega \setminus D} \nabla u_0 \cdot \nabla {\varphi} \, \text{d}x - \int_{D} \mu \nabla u_0 \cdot \nabla {\varphi} \, \text{d}x  - \int_{\partial D} \mu_s \partial_{s} u_0 \partial_{s} {\varphi} \, \text{d}s - \int_{\partial D}  \mu_s \ell_s^2 \partial^{2}_{s} u_0  \partial^{2}_{s}  {\varphi} \, \text{d}s
\end{multline}
for all $\varphi \in \mathcal{H} \cap H^{1}_{0}(\Omega)$. We define $A(\cdot ,\cdot) : \big( \mathcal{H} \cap H^{1}_{0}(\Omega) \big ) \times \big ( \mathcal{H} \cap H^{1}_{0}(\Omega) \big ) \rightarrow \mathbb{R}$ as 
\begin{equation}
\label{eqn:Adef}
A(v , \varphi) =  \int_{\Omega \setminus D} \nabla v \cdot \nabla {\varphi} \, \text{d}x + \int_{D} \mu \nabla v \cdot \nabla {\varphi} \, \text{d}x + \int_{\partial D} \mu_s \partial_{s} v \partial_{s} {\varphi} \, \text{d}s + \int_{\partial D}  \mu_s \ell_s^2 \partial^{2}_{s} v  \partial^{2}_{s}  {\varphi} \, \text{d}s.
\end{equation}
It is clear that $A(\cdot , \cdot)$ is a sesquilinear form. We also define $L: \mathcal{H} \cap H^{1}_{0}(\Omega) \rightarrow \mathbb{R}$ as 
\begin{equation}
\label{eqn:Ldef}
L(\varphi) = - \int_{\Omega \setminus D} \nabla u_0 \cdot \nabla {\varphi} \, \text{d}x - \int_{D} \mu \nabla u_0 \cdot \nabla {\varphi} \, \text{d}x  - \int_{\partial D} \mu_s \partial_{s} u_0 \partial_{s} {\varphi} \, \text{d}s - \int_{\partial D}  \mu_s \ell_s^2 \partial^{2}_{s} u_0  \partial^{2}_{s}  {\varphi} \, \text{d}s,
\end{equation}
where it is clear that $L(\cdot )$ is linear in $\varphi$. Thus, the variational formulation with respect to $v \in \mathcal{H} \cap H^{1}_{0}(\Omega)$ as written in \eqref{var-form-v} can be expressed as $$A(v , \varphi) = L(\varphi).$$

\noindent The following lemmas are necessary to use the Lax-Milgram Lemma in order to show the well-posedness of \eqref{var-form-v}.

\begin{lemma}\label{A-bdd-coercive}
    The sesquilinear form $A(\cdot , \cdot )$ defined in \eqref{eqn:Adef} is bounded and coercive.
\end{lemma}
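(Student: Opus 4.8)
The plan is to verify the two hypotheses of the Lax--Milgram Lemma directly, since the structure of $A$ is aligned exactly with the three quadratic quantities defining the norm $\norm{\cdot}_{\mathcal H}$ on $\mathcal H \cap H^1_0(\Omega)$.

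For boundedness, I would estimate each of the four terms defining $A(v,\varphi)$ separately by the Cauchy--Schwarz inequality in the relevant space. After extending the domains of integration from $\Omega\setminus D$ and $D$ up to all of $\Omega$, the two bulk terms are bounded by $\norm{\nabla v}_{L^2(\Omega)}\norm{\nabla \varphi}_{L^2(\Omega)}$ weighted by $1$ and $\mu$, while the two surface terms are bounded by $\norm{\partial_s v}_{L^2(\partial D)}\norm{\partial_s \varphi}_{L^2(\partial D)}$ and $\norm{\partial_s^2 v}_{L^2(\partial D)}\norm{\partial_s^2 \varphi}_{L^2(\partial D)}$ weighted by $\mu_s$ and $\mu_s\ell^2$. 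Summing these and applying the discrete Cauchy--Schwarz inequality over the three matching pairs then yields $|A(v,\varphi)|\le C\norm{v}_{\mathcal H}\norm{\varphi}_{\mathcal H}$ with $C=\max\{1,\mu,\mu_s,\mu_s\ell^2\}$.

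For coercivity, the key observation is that $A(v,v)$ is precisely a positively weighted version of the very terms that constitute $\norm{v}^2_{\mathcal H}$, namely
$$A(v,v)=\int_{\Omega\setminus D}|\nabla v|^2\,\mathrm{d}x+\int_D \mu\,|\nabla v|^2\,\mathrm{d}x+\mu_s\norm{\partial_s v}_{L^2(\partial D)}^2+\mu_s\ell^2\norm{\partial_s^2 v}_{L^2(\partial D)}^2.$$
Bounding the two bulk integrals below by $\min\{1,\mu\}\,\norm{\nabla v}_{L^2(\Omega)}^2$ and invoking the positivity of the material constants $\mu,\mu_s>0$ and $\ell>0$ (guaranteed by the strong ellipticity conditions) to ensure every weight is strictly positive, I obtain $A(v,v)\ge c\,\norm{v}_{\mathcal H}^2$ with $c=\min\{1,\mu,\mu_s,\mu_s\ell^2\}>0$.

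I expect no genuine obstacle here: coercivity is immediate because $\norm{\cdot}_{\mathcal H}$ is built from exactly the quadratic quantities appearing in $A(v,v)$, so no Poincar\'e inequality is required at this step. The Poincar\'e inequality noted just above the lemma plays only the prior role of certifying that $\norm{\cdot}_{\mathcal H}$ is a genuine norm on $\mathcal H\cap H^1_0(\Omega)$ (controlling the full $H^1$-norm), so that this space is a Hilbert space on which Lax--Milgram applies. The only points deserving any care are the extension of the bulk integrals from $\Omega\setminus D$ and $D$ to all of $\Omega$ in both estimates and the bookkeeping of the positivity constants, both of which are routine.
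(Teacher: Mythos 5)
Your proof is correct and follows essentially the same route as the paper: term-by-term Cauchy--Schwarz for boundedness, and for coercivity the direct observation that $A(v,v)$ is a positively weighted sum of exactly the three quadratic quantities defining $\norm{\cdot}_{\mathcal H}$, yielding the constant $\min\{1,\mu,\mu_s,\mu_s\ell^2\}$ just as in the paper (your boundedness constant $\max\{1,\mu,\mu_s,\mu_s\ell^2\}$ versus the paper's $\max(1,\mu)+\mu_s+\mu_s\ell^2$ is an immaterial difference). Your remark that the Poincar\'e inequality enters only to make $\norm{\cdot}_{\mathcal H}$ a genuine norm on $\mathcal H\cap H^1_0(\Omega)$, not in the coercivity estimate itself, is also consistent with how the paper organizes the argument.
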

\begin{proof}
For boundedness, consider 
\begin{align*}
    \rvert A (v , \varphi ) \rvert &\leq \text{max}(1,\mu) \|\nabla v\|_{L^{2}(\Omega)} \norm{\nabla \varphi}_{L^{2}(\Omega)} + \mu_s \norm{\partial_s v}_{L^{2}(\partial D)} \norm{\partial_s \varphi}_{L^{2}(\partial D)} \\
    & \hspace{2cm} + \mu_s \ell_s^2 \|\partial^2_s v\|_{L^{2}(\partial D)} \|\partial^2_s \varphi\|_{L^{2}(\partial D)} \\
    &\leq \big ( \text{max}(1,\mu)  \norm{\nabla \varphi}_{L^{2}(\Omega)} + \mu_s \norm{\partial_s \varphi}_{L^{2}(\partial D)} + \mu_s \ell_s^2 \|\partial^2_s \varphi\|_{L^{2}(\partial D)} \big ) \norm{v}_{\mathcal{H}} \\
    &\leq \big( \text{max}(1,\mu) + \mu_s + \mu_s \ell_s^2 \big) \norm{v}_{\mathcal{H}} \norm{\varphi}_{\mathcal{H}}.
\end{align*}
Thus, $A(\cdot , \cdot )$ is bounded. For coercivity, consider
\begin{align*}
\rvert A(v,v) \rvert &\geq \text{min}(1,\mu) \norm{\nabla v}^{2}_{L^{2}(\Omega)} + \mu_s \|\partial_s v\|^{2}_{L^{2}(\partial D)} + \mu_s \ell_s^2 \|\partial^2_s v\|^{2}_{L^{2}(\partial D)} \\
    &\geq \text{min} \big(( \text{min}(1,\mu), \mu_s , \mu_s \ell_s^2 \big) \norm{v}^{2}_{\mathcal{H}}.
\end{align*}
Therefore, $A(\cdot , \cdot)$ is coercive. 
\end{proof}

\begin{lemma}\label{L-bdd}
    The linear map $L$ defined in \eqref{eqn:Ldef} is bounded, i.e. there exists a constant $C>0$ such that $\rvert L (\varphi ) \rvert \leq C \norm{f}_{H^{1/2}(\partial \Omega)} \norm{\varphi}_{\mathcal{H}}$ {\color{black}for all $\varphi \in \mathcal{H}$}.
\end{lemma}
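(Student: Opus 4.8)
The plan is to recognize that $L$ is, up to sign, simply the sesquilinear form $A$ evaluated at the lifting $u_0$ in its first slot: comparing the definitions of $A$ and $L$ directly shows $L(\varphi) = -A(u_0,\varphi)$. Thus the boundedness of $L$ reduces to the boundedness estimate already performed in the proof of Lemma \ref{A-bdd-coercive}, combined with the elliptic-regularity bound \eqref{u_0-H-norm}. No genuinely new analysis is required.

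First I would estimate $|L(\varphi)|$ term by term. Applying the Cauchy--Schwarz inequality in $L^2(\Omega)$ to the two bulk terms and in $L^2(\partial D)$ to the two interface terms, and then collecting the factors carrying $u_0$ into the seminorm $\|u_0\|_{\mathcal H}$ and those carrying $\varphi$ into $\|\varphi\|_{\mathcal H}$, gives
\begin{align*}
|L(\varphi)| &\le \max(1,\mu)\,\|\nabla u_0\|_{L^2(\Omega)}\|\nabla \varphi\|_{L^2(\Omega)} + \mu_s \|\partial_s u_0\|_{L^2(\partial D)}\|\partial_s \varphi\|_{L^2(\partial D)} \\
&\quad + \mu_s \ell^2 \|\partial^2_s u_0\|_{L^2(\partial D)}\|\partial^2_s \varphi\|_{L^2(\partial D)}
\le \bigl(\max(1,\mu)+\mu_s+\mu_s\ell^2\bigr)\|u_0\|_{\mathcal H}\|\varphi\|_{\mathcal H}.
\end{align*}
This is exactly the computation underlying the boundedness half of Lemma \ref{A-bdd-coercive}, and it remains valid even though $u_0 \notin H^1_0(\Omega)$, since only the $\mathcal H$-seminorm appears and this seminorm is well-defined on all of $\mathcal H$.

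Then I would invoke the elliptic-regularity estimate \eqref{u_0-H-norm}, namely $\|u_0\|_{\mathcal H}\le C\|f\|_{H^{1/2}(\partial\Omega)}$, to replace $\|u_0\|_{\mathcal H}$ and obtain the claimed bound $|L(\varphi)|\le C\|f\|_{H^{1/2}(\partial\Omega)}\|\varphi\|_{\mathcal H}$, with a constant depending only on $\mu$, $\mu_s$, and $\ell$.

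The only point requiring any care---and the closest thing to an obstacle---is the presence of the interface terms $\|\partial_s u_0\|_{L^2(\partial D)}$ and $\|\partial^2_s u_0\|_{L^2(\partial D)}$, which involve second-order tangential derivatives of the trace of $u_0$ on $\partial D$. These are controlled precisely because the lifting $u_0$ solves the homogeneous Dirichlet problem \eqref{lifting} and $\partial D$ is separated from $\partial\Omega$ by a positive distance; interior elliptic regularity then renders $u_0$ smooth in a neighborhood of $\partial D$, which is exactly the content encoded in \eqref{u_0-H-norm}. Hence the interface contributions are finite and bounded by $\|f\|_{H^{1/2}(\partial\Omega)}$, and nothing beyond \eqref{u_0-H-norm} is needed to close the argument.
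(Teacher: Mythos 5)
Your proposal is correct and follows essentially the same argument as the paper: a term-by-term Cauchy--Schwarz estimate collecting the $u_0$ factors into $\|u_0\|_{\mathcal H}$, followed by the elliptic-regularity bound \eqref{u_0-H-norm} to convert $\|u_0\|_{\mathcal H}$ into $C\|f\|_{H^{1/2}(\partial\Omega)}$. Your added observations---that $L(\varphi) = -A(u_0,\varphi)$ and that the interface terms are finite because $u_0$ is smooth near $\partial D$ by interior regularity---are accurate glosses on the same computation rather than a different route.
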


\begin{proof}
    Consider
    \begin{align*}
        \rvert L (\varphi ) \rvert &\leq \text{max}(1,\mu) \norm{\nabla u_0}_{L^{2}(\Omega)} \norm{\nabla \varphi}_{L^{2}(\Omega)} \\
        & \hspace{1cm} + \mu_s \norm{\partial_s u_0}_{L^{2}(\partial D)} \norm{\partial_s \varphi}_{L^{2}(\partial D)} + \mu_s \ell_s^2 \|\partial^2_s u_0\|_{L^{2}(\partial D)} \|\partial^2_s \varphi\|_{L^{2}(\partial D)} \\
        &\leq \big ( \text{max}(1,\mu) \norm{\nabla u_0}_{L^{2}(\Omega)} + \mu_s \|\partial_s u_0\|_{L^{2}(\partial D)} + \mu_s \ell_s^2 \|\partial^2_s u_0\|_{L^{2}(\partial D)} \big ) \norm{\varphi}_{\mathcal{H}} \\
        &\leq \big( \text{max}(1,\mu) + \mu_s + \mu_s \ell_s^2 \big) \norm{u_0}_{\mathcal{H}} \norm{\varphi}_{\mathcal{H}}.
    \end{align*}
    By \ref{u_0-H-norm}, we have that
    $$ \norm{u_0}_{\mathcal{H}} \leq C \norm{f}_{H^{1/2}(\partial \Omega)}.$$
Therefore, 
$$|L (\varphi)| \leq C \norm{f}_{H^{1/2}(\partial \Omega)} \norm{\varphi}_{\mathcal{H}}$$
where $C>0$ is an absolute constant. 
\end{proof}

Since $A(\cdot , \cdot)$ is a sesquilinear form that is bounded and coercive and $L(\cdot)$ is linear and bounded, it follows that by the Lax-Milgram Lemma, there exists a unique $v \in \mathcal{H} \cap H^{1}_{0}(\Omega)$ that satisfies \eqref{var-form-v} for all $\varphi \in \mathcal{H} \cap H^{1}_{0}(\Omega)$ and $\norm{v}_{\mathcal{H}} \leq C \norm{f}_{H^{1/2}(\partial \Omega)}$ for some absolute constant $C > 0$. Thus, $u = v + u_0$ satisfies the weak form of \eqref{bvp}. Furthermore, the previous implies that $u$ depends continuously on the data.
\begin{lemma}\label{cont-unique}
    The solution $u$ of \eqref{bvp} continuously depends on the Dirichlet data $f \in H^{1/2}(\partial \Omega)$ and is unique.
\end{lemma}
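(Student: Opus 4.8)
The plan is to read off both claims directly from the Lax--Milgram machinery already assembled, since the substantive analytic work (boundedness and coercivity of $A$ in Lemma \ref{A-bdd-coercive}, boundedness of $L$ in Lemma \ref{L-bdd}, and the fact that $\|\cdot\|_{\mathcal{H}}$ is a genuine norm on $\mathcal{H} \cap H^{1}_{0}(\Omega)$) has been completed. No new estimates are needed; the task is to package the existing ones.

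For continuous dependence, I would start from the decomposition $u = v + u_0$, where $u_0$ is the harmonic lifting solving \eqref{lifting} and $v \in \mathcal{H} \cap H^{1}_{0}(\Omega)$ is the unique Lax--Milgram solution of \eqref{var-form-v}. The triangle inequality gives $\|u\|_{\mathcal{H}} \leq \|v\|_{\mathcal{H}} + \|u_0\|_{\mathcal{H}}$; bounding the first term by the Lax--Milgram estimate $\|v\|_{\mathcal{H}} \leq C \|f\|_{H^{1/2}(\partial\Omega)}$ and the second by the elliptic-regularity bound \eqref{u_0-H-norm} yields $\|u\|_{\mathcal{H}} \leq C\|f\|_{H^{1/2}(\partial\Omega)}$ for an absolute constant $C > 0$, which is the asserted continuous dependence. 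Since the entire construction $f \mapsto u_0 \mapsto v \mapsto u$ is linear in $f$, this upgrades at once to Lipschitz dependence of the solution map.

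For uniqueness, I would suppose $u_1, u_2 \in \mathcal{H}$ both solve the weak form of \eqref{bvp} with the same Dirichlet datum $f$, and set $w = u_1 - u_2$. Since both traces on $\partial \Omega$ equal $f$, we have $w \in \mathcal{H} \cap H^{1}_{0}(\Omega)$, and subtracting the two variational identities \eqref{var-form} shows that $A(w, \varphi) = 0$ for every $\varphi \in \mathcal{H} \cap H^{1}_{0}(\Omega)$. Choosing the test function $\varphi = w$ and invoking coercivity from Lemma \ref{A-bdd-coercive} gives $\min\bigl(\min(1,\mu), \mu_s, \mu_s\ell^2\bigr) \|w\|_{\mathcal{H}}^{2} \leq |A(w,w)| = 0$, so $\|w\|_{\mathcal{H}} = 0$.

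The only point requiring care --- and the closest thing to an obstacle --- is the final step: coercivity controls the quantity $\|\cdot\|_{\mathcal{H}}$, which is a priori only a seminorm, so its vanishing forces $w = 0$ precisely because $\|\cdot\|_{\mathcal{H}}$ is in fact a norm on the subspace $\mathcal{H} \cap H^{1}_{0}(\Omega)$. This is exactly the consequence of the Poincar\'e inequality recorded just before Lemma \ref{A-bdd-coercive}: on functions with vanishing trace the quantity $\|\cdot\|_{\mathcal{H}}$ dominates the full $H^{1}(\Omega)$ norm, so $\|w\|_{\mathcal{H}} = 0$ implies $w = 0$ and hence $u_1 = u_2$. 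It is therefore essential that $w$ genuinely lie in the subspace on which coercivity was established; this is why I reduce uniqueness to the homogeneous problem at the outset rather than attempting to compare $u_1$ and $u_2$ directly in the larger space $\mathcal{H}$.
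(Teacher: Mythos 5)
Your proposal is correct and follows essentially the same route as the paper's proof: continuity from the Lax--Milgram bound $\|v\|_{\mathcal{H}} \leq C\|f\|_{H^{1/2}(\partial\Omega)}$ together with \eqref{u_0-H-norm}, and uniqueness by testing the difference $w = u_1 - u_2 \in \mathcal{H} \cap H^{1}_{0}(\Omega)$ against itself and invoking coercivity plus the Poincar\'e inequality to pass from $\|w\|_{\mathcal{H}} = 0$ to $w = 0$. Your closing remark correctly identifies the one delicate point --- that $\|\cdot\|_{\mathcal{H}}$ is only a seminorm on $\mathcal{H}$ but a genuine norm on the subspace with vanishing trace --- which is exactly the role the Poincar\'e inequality plays in the paper's argument.
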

\begin{proof}
Continuity follows from the fact that $\norm{v}_{\mathcal{H}} \leq C \norm{f}_{H^{1/2}(\partial \Omega)}$ and \eqref{u_0-H-norm}. Thus, there exists a positive constant $C$ such that $\norm{u}_{\mathcal{H}} \leq C \norm{f}_{H^{1/2}(\partial \Omega)}$. To show uniqueness, suppose $u_1$ and $u_2$ are solutions to \eqref{bvp}. Let $w = u_1 - u_2 \in \mathcal{H} \cap H^{1}_{0}(\Omega)$. Then $w$ also satisfies $A(w,\varphi) = 0$ for all $\varphi \in \mathcal{H} \cap H^{1}_{0}(\Omega)$. Thus, $A(w,w) = 0$ which by coercivity of $A$ implies that $\| w \|_{\cl H} =0$. The Poincar\'e inequality and the fact that $w \in H^1_0(\Omega)$ then imply that $w = 0$, proving uniqueness.  
\end{proof}

\begin{theorem}\label{thm:bvp-well-posed}
    The boundary value problem \eqref{bvp} is well-posed.
\end{theorem}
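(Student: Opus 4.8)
The plan is to observe that well-posedness in the Hadamard sense decomposes into three assertions—existence of a solution, uniqueness, and continuous dependence on the data $f$—and that the three preceding lemmas have already isolated exactly the ingredients needed for each, so the proof is essentially an assembly step.

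First I would establish existence via the Lax–Milgram lemma. The space $\mathcal{H} \cap H^{1}_{0}(\Omega)$ is a Hilbert space under $\|\cdot\|_{\mathcal{H}}$, since the Poincar\'e inequality recorded in the preamble upgrades the seminorm to a genuine norm on this subspace. By Lemma \ref{A-bdd-coercive} the form $A(\cdot,\cdot)$ is bounded and coercive on $\mathcal{H} \cap H^{1}_{0}(\Omega)$, and by Lemma \ref{L-bdd} the functional $L$ is linear and bounded. Lax–Milgram then yields a unique $v \in \mathcal{H} \cap H^{1}_{0}(\Omega)$ with $A(v,\varphi) = L(\varphi)$ for every $\varphi \in \mathcal{H} \cap H^{1}_{0}(\Omega)$, together with the estimate $\|v\|_{\mathcal{H}} \le C\|f\|_{H^{1/2}(\partial\Omega)}$. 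Setting $u = v + u_0$, where $u_0$ is the lifting from \eqref{lifting}, produces $u \in \mathcal{H}$ satisfying the variational formulation \eqref{var-form}, i.e.\ a weak solution of \eqref{bvp}.

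Next, uniqueness and continuous dependence are precisely the content of Lemma \ref{cont-unique}: the difference $w$ of two solutions lies in $\mathcal{H} \cap H^{1}_{0}(\Omega)$ and satisfies $A(w,w)=0$, so coercivity forces $\|w\|_{\mathcal{H}} = 0$, and the Poincar\'e inequality upgrades this to $w = 0$; continuous dependence follows by combining the Lax–Milgram bound on $v$ with the lifting estimate \eqref{u_0-H-norm} to obtain $\|u\|_{\mathcal{H}} \le C\|f\|_{H^{1/2}(\partial\Omega)}$. Concatenating existence, uniqueness, and this stability estimate gives well-posedness. Since all of the analytic effort has been front-loaded into the three lemmas, no genuine obstacle remains; the only point meriting a moment's care is ensuring the hypotheses of Lax–Milgram are applied on the correct Hilbert space, namely verifying that $\|\cdot\|_{\mathcal{H}}$ is a bona fide norm (not merely a seminorm) on $\mathcal{H} \cap H^{1}_{0}(\Omega)$, so that coercivity delivers the desired conclusion.
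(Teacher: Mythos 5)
Your proposal is correct and follows essentially the same route as the paper: the paper's own proof is precisely this assembly, invoking Lemma \ref{A-bdd-coercive} and Lemma \ref{L-bdd} to apply Lax--Milgram (on the Hilbert space $\mathcal{H} \cap H^{1}_{0}(\Omega)$, with the Poincar\'e-based norm upgrade noted in the text) for existence of $v$ and hence $u = v + u_0$, and then Lemma \ref{cont-unique} for uniqueness and continuous dependence. Your added care about verifying that $\|\cdot\|_{\mathcal{H}}$ is a genuine norm on the subspace matches the remark the paper makes just after defining $\mathcal{H}$.
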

\begin{proof}
    We have that \eqref{var-form} is the weak formulation of \eqref{bvp}. Lemma \ref{A-bdd-coercive} and Lemma \ref{L-bdd} imply that there exists $u$ that satisfies \eqref{var-form}. Lastly, Lemma \ref{cont-unique} shows that the solution $u$ continuously depends on the data and is unique. Therefore, \eqref{bvp} is well-posed.
\end{proof}

\section{\bf Factorization of the Data Operator}
\label{sec:factorization}

In this section, we derive a symmetric factorization of the data operator, $\Lambda - \Lambda_0$. The decomposition will set the foundation for the main results of Section \ref{inverse-parameter-problem} on parameter recovery and Section \ref{inverse-shape-problem} on shape reconstruction. In our physical system \eqref{bvp}, we assume that a displacement $f$ has been applied to the boundary $\partial \Omega$. The measured data is given by the induced, shear force $\partial_{\nu}u$.  We similarly consider the unperturbed system \eqref{lifting}, and the resulting shear force $\partial_{\nu}u_0$.
By the well-posedness and linearity of the physical systems \eqref{bvp} and \eqref{lifting}, we have that the prescribed boundary- to body-displacement mappings
$$f \mapsto u \quad \text{and} \quad f \mapsto u_0$$
are bounded linear operators from $H^{1/2}(\partial \Omega)$ to $\mathcal{H}$. Furthermore, by additionally appealing to the trace theorem, it holds that the DtN mappings
$$\Lambda \enspace \text{and} \enspace \Lambda_0: H^{1/2}(\partial \Omega) \longrightarrow H^{-1/2}(\partial \Omega)$$
where 
$$\Lambda f = \partial_{\nu} u \big\rvert_{\partial \Omega} \quad \text{and} \quad \Lambda_0 f = \partial_{\nu} u_0 \big\rvert_{\partial \Omega}$$
are also bounded linear operators. The analysis is based on the factorization of the data operator $\Lambda - \Lambda_0$, where the imaging functional to be derived will use its spectral decomposition (or singular value decomposition). The theory used here was developed in \cite{harris1} and will ultimately allow us to derive an imaging functional for extended regions. We begin by considering the eigensystem of the fourth-order boundary operator $\mathscr{B}$ as defined in \eqref{b-op}. As shown in \cite{lions}, for smooth $\partial D$, there exists
\begin{equation}\label{eig-sys}
    \{\lambda_n , \phi_n\}_{n\in \mathbb{N} \cup \{0\}} \quad \text{such that} \quad -\partial^{2}_{s}\phi_n = \lambda^2_n \phi_n\enspace \text{for all} \enspace n \in \mathbb{N} \cup \{0\}
\end{equation}
where $\lambda_n \geq 0$ for all $n$ and $\phi_n$ are $C^{\infty}$ functions that form an orthonormal basis of $L^{2}(\partial D)$. Here, $\lambda_0 = 0$ is a simple eigenvalue with associated eigenspace spanned by $\phi_0 := |\p D|^{-1/2}$, and $\lambda_n > 1$ for all $n \in \mathbb N$. Moreover, for any $p \geq 0$, 
$$q \in H^{p}(\partial D) \quad \text{if and only if} \quad \| q \|^{2}_{H^p (\partial D)} := |q_0|^2 + \sum_{n \geq 1} \rvert \lambda^{p}_n q_n\rvert^2 < \infty,$$
where $q_n = (q,\phi_n)_{L^{2}(\partial D)}$ for all $n \in \mathbb{N}$. For all $p \geq 0$, we identify the dual space of $H^p(\partial D)$ with $H^{-p}(\partial D)$. That is, for each $p \geq 0$, there is a Gelfand triple
\[H^p(\partial D) \subset L^{2}(\partial D) \subset H^{-p}(\partial D)\]
with dense embedding, with $L^2(\partial D)$ being the pivot space. We refer to Chapter 3 and Appendix A of \cite{mclean2000strongly} and/or \cite{taylor1996partial1}, Chapter 4 for precise discussions. The following regularity result will be useful in our factorization analysis. 

\begin{lemma}\label{add-reg}
    There exists a constant $C > 0$ with the following property. {\color{black}For any $f \in H^{1/2}(\partial \Omega)$}, if $u \in \mathcal{H}$ is the unique solution to \eqref{bvp}, then $ u \rvert_{\partial D} \in H^{7/2}(\partial D)$ and $\| u \|_{H^{7/2}(\p D)} \leq C \| f \|_{H^{1/2}(\p \Omega)}$.
\end{lemma}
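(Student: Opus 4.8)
The plan is to exploit the fact that the interface operator $\mathscr{B}$ from \eqref{b-op} is a fourth-order elliptic operator along $\partial D$, so that it converts the (negative-order) flux jump into a gain of exactly four derivatives on the Dirichlet trace $g := u\rvert_{\partial D}$. Since $7/2 = -1/2 + 4$, the whole estimate hinges on producing the flux jump in $H^{-1/2}(\partial D)$ and then inverting the constant-coefficient symbol of $\mathscr{B}$ in the eigenbasis $\{\phi_n\}$ of \eqref{eig-sys}.

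First I would control the flux jump in a negative Sobolev space. Because $u$ is harmonic separately in $\Omega\setminus\overline D$ and in $D$, the vector field $\nabla u$ lies in $H(\mathrm{div})$ on each subdomain (its divergence is $0\in L^2$), so the one-sided normal traces $\partial_\nu u^{\pm}$ are well defined elements of $H^{-1/2}(\partial D)$ with
\[
\|\partial_\nu u^{\pm}\|_{H^{-1/2}(\partial D)} \le C\,\|\nabla u\|_{L^2(\Omega)} \le C\,\|u\|_{H^1(\Omega)}.
\]
Hence $[\![\partial_\nu u]\!] = \partial_\nu u^{+} - \mu\,\partial_\nu u^{-} \in H^{-1/2}(\partial D)$, and by the well-posedness bound of Lemma \ref{cont-unique} the right-hand side is controlled by $C\|f\|_{H^{1/2}(\partial\Omega)}$.

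Next I would diagonalize the boundary condition. Expanding $g=\sum_n g_n\phi_n$ and $[\![\partial_\nu u]\!]=\sum_n h_n\phi_n$, I pair the interface identity $\mathscr{B}(u)=[\![\partial_\nu u]\!]$ against each smooth eigenfunction $\phi_n$ and integrate by parts on the closed curve $\partial D$ (legitimate since $g\in H^2(\partial D)$), moving all derivatives onto $\phi_n$ and using $-\partial_s^2\phi_n=\lambda_n^2\phi_n$. This yields the scalar relations
\[
\big(\mu_s\ell^2\lambda_n^4 + \mu_s\lambda_n^2\big)\,g_n = h_n, \qquad n\ge 1,
\]
with the $n=0$ mode annihilated by $\mathscr{B}$ (its kernel is the constants), consistently with the solvability condition $h_0=0$. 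Thus $g_n = h_n/m_n$, where $m_n := \mu_s\lambda_n^2(\ell^2\lambda_n^2+1)$. Finally I read off the regularity from the multiplier asymptotics: since $\lambda_n>1$ for $n\ge1$ we have $m_n\ge \mu_s\ell^2\lambda_n^4$, hence $\lambda_n^7/m_n^2 \le (\mu_s\ell^2)^{-2}\lambda_n^{-1}$, and by the norm characterization following \eqref{eig-sys},
\[
\|g\|_{H^{7/2}(\partial D)}^2 = |g_0|^2 + \sum_{n\ge1}\lambda_n^7|g_n|^2 \le |g_0|^2 + (\mu_s\ell^2)^{-2}\,\|[\![\partial_\nu u]\!]\|_{H^{-1/2}(\partial D)}^2.
\]
The zeroth mode is harmless, since $|g_0|\le\|g\|_{L^2(\partial D)}\le C\|u\|_{H^1(\Omega)}$ by the trace theorem; combining with the bounds above gives $g\in H^{7/2}(\partial D)$ together with $\|u\|_{H^{7/2}(\partial D)}\le C\|f\|_{H^{1/2}(\partial\Omega)}$.

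The step I expect to be the main obstacle is the first one: rigorously justifying that the one-sided normal derivatives exist in $H^{-1/2}(\partial D)$ and are bounded by $\|u\|_{H^1(\Omega)}$, and that the distributional interface identity $\mathscr{B}(u)=[\![\partial_\nu u]\!]$ may be tested against the smooth eigenfunctions $\phi_n$ to produce the clean scalar relations $m_n g_n = h_n$ (in particular, reconciling the $H^{-1/2}$ representation of $[\![\partial_\nu u]\!]$ with the a priori $H^{-2}$ meaning of $\mathscr{B}(u)$ for $g\in H^2$). Everything downstream is an elementary multiplier estimate, and the exponent $7/2$ is precisely the four-derivative smoothing afforded by the leading term $\mu_s\ell^2\partial_s^4$ of $\mathscr{B}$ acting on data in $H^{-1/2}(\partial D)$.
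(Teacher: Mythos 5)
Your proof is correct and follows essentially the same route as the paper: expand the Dirichlet trace in the eigenbasis $\{\phi_n\}$ of $-\partial_s^2$, use the interface condition $\mathscr{B}(u)=[\![\partial_\nu u]\!]$ to obtain the scalar relations $(\mu_s\ell^2\lambda_n^4+\mu_s\lambda_n^2)g_n = h_n$, and then apply the multiplier bound $\lambda_n^7/(\mu_s\ell^2\lambda_n^4+\mu_s\lambda_n^2)^2\lesssim \lambda_n^{-1}$ against the $H^{-1/2}(\partial D)$ control of the flux jump, treating the zero mode separately by a trace estimate. The only differences are expository: you justify the $H^{-1/2}$ normal traces via the $H(\mathrm{div})$ trace theorem and note the compatibility condition $h_0=0$ explicitly, where the paper simply cites well-posedness.
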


\begin{proof}
By the wellposedness of \eqref{bvp}, we have $[\![\partial_\nu u ]\!] \big \rvert_{\partial D} \in H^{-1/2}(\partial D)$ with 
\begin{align}
|g_0|^2 + \sum_{n \geq 1}|g_n|^2 \lambda^{-1}_n \leq C \| f \|_{H^{1/2}(\p \Omega)}^2 \enspace \mbox{where} \enspace [\![\partial_\nu u ]\!] \big\rvert_{\partial D}= \sum g_n \phi_n.\label{eq:normal}
\end{align}
By \eqref{bvp}, $\mathscr{B}(u) \in H^{-1/2}(\partial D)$. We write $$u - \fint_{\partial D} u \, \text{d}s = \sum_{n \geq 1} u_n \phi_n \quad \text{on} \enspace \partial D.$$
Therefore, $$\mathscr{B}(u - \fint_{\partial D} u \, \text{d}s) = \sum_{n \geq 1} u_n (\mu_s \lambda^2_n +  \mu_s \ell_s^2 \lambda^4_n) \phi_n \quad \text{on} \enspace \partial D.$$
By the boundary condition on $\partial D$, for all $n \geq 1$, 
$$u_n = \frac{g_n}{\mu_s \ell_s^2 \lambda^4_n + \mu_s \lambda^2_n}. $$
Thus, 
\begin{align}
    \sum_{n \geq 1} \big|\lambda^{7/2}_n u_n\big|^2 = \sum_{n \geq 1} \lambda^{7}_n \big| u_n \big|^2 = \sum_{n \geq 1} \frac{\lambda^{7}_n \big| g_n \big|^2}{(\mu_s \ell_s^2 \lambda^4_n + \mu_s \lambda^2_n)^2} = \sum_{n \geq 1} \frac{\lambda^{8}_n }{(\mu_s \ell_s^2 \lambda^4_n + \mu_s \lambda^2_n)^2} \big| g_n \big|^2 \lambda^{-1}_n. \label{eq:high}
\end{align}
The identifies \eqref{eq:high}, \eqref{eq:normal}, and the fact that 
$$\Bigl |\int_{\p D} \phi_0 u \, \text{d}s\Bigr | \leq \| u \|_{L^2(\p D)} \leq \| u \|_{\cl H} \leq C \| f \|_{H^{1/2}(\p \Omega)}$$ complete the proof.   
\end{proof}

This result will facilitate a symmetric factorization of the data operator $(\Lambda - \Lambda_0)$. Influenced by the difference of these DtN operators, we note that $u - u_0 \in H^{1}_{0} (\Omega)$ solves 
$$    \begin{cases} 
      - \Delta (u-u_0) = 0 & \text{in} \enspace \Omega \setminus D, \\
      -\mu \Delta (u-u_0) = 0 & \text{in} \enspace D , \\
      [\![\partial_\nu (u-u_0 )]\!] = \mathscr{B}(u) - (1-\mu) \partial_{\nu} u_0 & \text{on} \enspace \partial D.
   \end{cases}
$$
Inspired by this, we define $w \in H^{1}_{0} (\Omega)$ to be the unique solution of 
\begin{equation} \label{aux-prob}
\begin{cases} 
      - \Delta w = 0 & \text{in} \enspace \Omega \setminus D , \\
      -\mu \Delta w = 0 & \text{in} \enspace D, \\
      [\![\partial_\nu w]\!] = \mathscr{B}(h) - (1-\mu) \Lambda_D h + (1-\mu) \partial_{\nu} w^{-}& \text{on} \enspace \partial D
   \end{cases}
\end{equation}
for a given $h \in H^{7/2}(\partial D)$ where $\Lambda_D $ is the interior DtN map defined on $\partial D$. One can show that \eqref{aux-prob} is well-posed by appealing to a variational formulation and using Green's first identity. From the well-posedness of this auxiliary problem and Lemma \ref{add-reg}, we have the following:
\begin{align} \label{eqn:wbd}
  \begin{array}{ll} 
  \| h\|_{H^{7/2}(\partial D)}^2 = \| \mathscr{B}(h) \|_{H^{-1/2}(\partial D)}^2 &= \| [\![\partial_\nu w]\!] + (1- \mu) \Lambda_D h - (1-\mu) \partial_{\nu} w^{-} \|_{H^{-1/2}(\partial D)}^2\\
    &\leq C \big ( \| w \|^{2}_{H^{1}_{0}(\Omega)} + \| \Lambda_D h\|^{2}_{H^{-1/2}(\partial D)} \big )  \\
    &\leq C \big ( \| w \|^{2}_{H^{1}_{0}(\Omega)} + \| h\|^{2}_{H^{1/2}(\partial D)} \big )  \\
    &\leq C \big ( \| w\|_{H^{1}_{0}(\Omega)}^2 + \| h\|^{2}_{H^{2}(\partial D)} \big ) ,
\end{array}
\end{align}
where the second to last inequality follows from the boundedness of the DtN map on $\p D$. Therefore, we can define the bounded linear operator
$$G:H^{7/2}(\partial D) \rightarrow H^{-1/2} (\partial \Omega) \quad \text{given by} \quad Gh = \partial_{\nu} w \big \rvert_{\partial \Omega}$$
where $w$ is the unique solution to \eqref{aux-prob}. Notice that since $\eqref{aux-prob}$ is well-posed, then $\partial_{\nu} w \rvert_{\partial \Omega} = (\Lambda - \Lambda_0) f$ provided that $h = u \rvert_{\partial D}$. Thus, we define the solution operator for \eqref{bvp} as
\begin{equation}\label{s-operator}
    S:H^{1/2}(\partial \Omega) \rightarrow H^{7/2} (\partial D) \quad 
\text{given by} \quad Sf = u \big \rvert_{\partial D}.
\end{equation}
From this, one sees that $(\Lambda - \Lambda_0) f = GSf $ for any $f \in H^{1/2} (\partial \Omega)$. From this preliminary factorization, we will further decompose the operator $G$ to provide a symmetric factorization for $(\Lambda - \Lambda_0)$. However, we first provide some useful properties of the solution operator $S$.
\begin{lemma}\label{s-compact-injective}
    The operator $S$ as defined in \eqref{s-operator} is {\color{black} bounded} and injective.
\end{lemma}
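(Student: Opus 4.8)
The plan is to prove the two assertions separately.

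\textbf{Injectivity.}
First I would unwind the definition $Sf = u|_{\partial D}$, where $u$ is the unique solution of \eqref{bvp} associated to the boundary datum $f$. Suppose $Sf = 0$, i.e.\ $u|_{\partial D} = 0$. Since $u|_{\partial D} = 0$, the surface terms $\partial_s u = \partial_s^2 u = 0$ on $\partial D$, so the boundary operator gives $\mathscr B(u) = 0$ on $\partial D$, whence $[\![\partial_\nu u]\!] = 0$ there. The key observation is then that $u|_{\partial D} = 0$ decouples the interior problem: the restriction $u|_D$ solves $-\mu \Delta u = 0$ in $D$ with zero Dirichlet data on $\partial D$, so $u \equiv 0$ in $D$, which forces $\partial_\nu u^- = 0$ on $\partial D$. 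Combined with $[\![\partial_\nu u]\!] = \partial_\nu u^+ - \mu \partial_\nu u^- = 0$, this yields $\partial_\nu u^+ = 0$ on $\partial D$. Now on the exterior subdomain $\Omega \setminus \overline D$ the function $u$ satisfies $-\Delta u = 0$ with both $u = 0$ and $\partial_\nu u = 0$ on the interior interface $\partial D$. By the unique continuation principle (or the Cauchy–Kovalevskaya/Holmgren uniqueness for the Laplacian with analytic-Cauchy data, equivalently a standard argument via harmonicity and the fact that the zero set of a nontrivial harmonic function cannot contain an open piece of boundary carrying zero Cauchy data), $u \equiv 0$ on $\Omega \setminus \overline D$. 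In particular the exterior trace of $u$ on $\partial \Omega$ vanishes, i.e.\ $f = u|_{\partial \Omega} = 0$. This establishes injectivity.

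\textbf{Compactness.}
For compactness I would exploit the gain of regularity already recorded in Lemma \ref{add-reg}. The natural target space in which $u|_{\partial D}$ lands is $H^{7/2}(\partial D)$, with the a priori bound $\|u\|_{H^{7/2}(\partial D)} \leq C\|f\|_{H^{1/2}(\partial \Omega)}$; but $S$ is declared as a map into $H^{7/2}(\partial D)$. The cleanest route is therefore to view $S$ as the composition of the bounded solution map $f \mapsto u|_{\partial D}$ into a \emph{strictly larger} Sobolev space and a compact embedding. Concretely, I would argue that elliptic regularity in fact produces a trace in $H^{s}(\partial D)$ for some $s > 7/2$ (one more half-derivative than Lemma \ref{add-reg} claims, obtainable by the same eigenfunction-expansion estimate \eqref{eq:high} pushed one power of $\lambda_n$ further, since the factor $\lambda_n^{8}/(\mu_s \ell^2 \lambda_n^4 + \mu_s \lambda_n^2)^2$ decays like $\lambda_n^{-4} \to 0$ leaving room to absorb extra powers). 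Then $S$ factors as a bounded operator $H^{1/2}(\partial \Omega) \to H^{s}(\partial D)$ followed by the compact Sobolev embedding $H^{s}(\partial D) \hookrightarrow H^{7/2}(\partial D)$, valid because $\partial D$ is a compact smooth one-dimensional manifold and $s > 7/2$. Compactness of $S$ then follows immediately.

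\textbf{Main obstacle.}
I expect the delicate point to be the compactness argument, specifically justifying the extra regularity beyond Lemma \ref{add-reg}. The stated lemma gives exactly $H^{7/2}$, which is the codomain of $S$, so a naive ``bounded $\circ$ compact embedding'' decomposition does not immediately apply without first sharpening the regularity estimate. The resolution is to return to the spectral computation: in \eqref{eq:high} the coefficient multiplying $|g_n|^2 \lambda_n^{-1}$ is $\lambda_n^{8}/(\mu_s \ell^2 \lambda_n^4 + \mu_s \lambda_n^2)^2 \sim (\mu_s \ell^2)^{-2} \lambda_n^{-1} \to 0$, so one may insert an additional factor $\lambda_n^{\delta}$ for small $\delta>0$ and still control the sum by $\|f\|_{H^{1/2}(\partial \Omega)}^2$, giving $u|_{\partial D} \in H^{7/2+\delta/2}(\partial D)$ with the corresponding bound. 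Alternatively, one can establish compactness directly by taking a bounded sequence $f_k$ in $H^{1/2}(\partial \Omega)$, noting $\{u_k|_{\partial D}\}$ is bounded in $H^{7/2}(\partial D)$ by Lemma \ref{add-reg}, and extracting a strongly convergent subsequence in $H^{7/2}(\partial D)$ — but this again requires the same strict gain, since boundedness in $H^{7/2}$ alone does not yield precompactness in $H^{7/2}$. For injectivity the only subtlety is invoking unique continuation cleanly, which is standard for the Laplacian.
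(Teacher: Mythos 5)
Your injectivity argument is correct and is essentially the paper's own proof: $u|_{\partial D}=0$ forces $u\equiv 0$ in $D$ via the interior Dirichlet problem, the transmission condition together with $\mathscr{B}(u)=0$ then kills the exterior Cauchy data $\bigl(u,\partial_\nu u^{+}\bigr)$ on $\partial D$, and Holmgren's theorem propagates $u\equiv 0$ to $\Omega\setminus \overline{D}$, whence $f=0$. No issues there.

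The compactness half contains a genuine gap. You correctly identified the subtle point: a map that is merely bounded into the codomain $H^{7/2}(\partial D)$ cannot be made compact by composing with a compact embedding, so a strict regularity gain beyond Lemma \ref{add-reg} is required (in fact this objection applies verbatim to the paper's own one-line proof, which invokes $H^{7/2}(\partial D)\hookrightarrow L^{2}(\partial D)$ and therefore only yields compactness of $S$ viewed as a map into $L^{2}(\partial D)$, whereas Theorem \ref{DtN-com-inv-ran} uses compactness into $H^{7/2}(\partial D)$). However, the mechanism you propose for the gain rests on a false asymptotic: the multiplier in \eqref{eq:high} satisfies
\[
\frac{\lambda_n^{8}}{(\mu_s\ell^{2}\lambda_n^{4}+\mu_s\lambda_n^{2})^{2}}
=\frac{1}{\mu_s^{2}\,(\ell^{2}+\lambda_n^{-2})^{2}}\;\longrightarrow\;(\mu_s\ell^{2})^{-2}>0 ,
\]
i.e.\ it converges to a positive constant; it does not decay like $\lambda_n^{-1}$ (nor $\lambda_n^{-4}$), as you assert in two places. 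Consequently, if all one knows is $[\![\partial_\nu u]\!]\big|_{\partial D}\in H^{-1/2}(\partial D)$, the exponent $7/2$ is sharp: inserting any extra factor $\lambda_n^{\delta}$ with $\delta>0$ produces $\sum_n\lambda_n^{\delta-1}|g_n|^{2}$, which is not controlled by $\sum_n\lambda_n^{-1}|g_n|^{2}$, so no $H^{7/2+\delta}$ bound follows from \eqref{eq:high} alone. The gain is nevertheless true, but it must come from upgrading $g:=[\![\partial_\nu u]\!]$ itself by an elliptic bootstrap: once $u|_{\partial D}\in H^{7/2}(\partial D)$ is known from Lemma \ref{add-reg}, elliptic regularity for the Dirichlet problem in $D$, and local elliptic regularity on the exterior side of $\partial D$ (legitimate since $\mathrm{dist}(\partial D,\partial\Omega)>0$), give $\partial_\nu u^{\pm}\big|_{\partial D}\in H^{5/2}(\partial D)$ with norms bounded by $C\|f\|_{H^{1/2}(\partial\Omega)}$; feeding $g\in H^{5/2}(\partial D)$ back into the relation $u_n=g_n/(\mu_s\ell^{2}\lambda_n^{4}+\mu_s\lambda_n^{2})$ then yields $u|_{\partial D}\in H^{13/2}(\partial D)$ boundedly, and $S$ factors through the genuinely compact embedding $H^{13/2}(\partial D)\hookrightarrow H^{7/2}(\partial D)$. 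With that replacement, your outline (and in fact the paper's statement) goes through.
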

\begin{proof}
{\color{black} The boundedness of $S$ is a direct consequence of Lemma \ref{add-reg}.} To prove injectivity, we let $f \in {\rm Null}(S)$, which implies that $u = 0$ in $D$. By our boundary condition, we have that $[\![\partial_\nu u ]\!] = \mathscr{B}(u) = 0$ on $\partial D$. Thus, $\partial_{\nu}u^{+} \big \rvert_{\partial D} = 0$. By Holmgren’s Theorem, we have that $u = 0$ in $\Omega$. Then by the trace theorem, it holds that $f = 0$ on $\partial \Omega$, proving that $S$ is injective.    
\end{proof}
Our proposed strategy to derive a symmetric factorization for $(\Lambda - \Lambda_0)$ is to decompose the operator $G$ involving the adjoint operator of $S$. To proceed, we define the sesqulinear dual-product on a closed curve $\Gamma$ as
\begin{equation}
    \langle \varphi , \psi \rangle_{\Gamma} = \int_{\Gamma} \varphi \psi \, \text{d}s \quad \text{for all} \quad \varphi \in H^{p}(\Gamma) \enspace \text{and} \enspace \psi \in H^{-p}(\Gamma) 
\end{equation}
between the Hilbert space $H^{p}(\Gamma)$ and its dual space $H^{-p}(\Gamma)$ for $p>0$ where $L^{2}(\Gamma)$ is the Hilbert pivot space. We are primarily interested in the cases where $\Gamma = \partial \Omega$ with $p=1/2$ and $\Gamma = \partial D$ with $p=7/2$, denoted as $\langle\cdot , \cdot \rangle_{\partial \Omega}$ and $\langle\cdot , \cdot \rangle_{\partial D}$, respectively. These dual-products will be instrumental in the rest of our factorization analysis. In particular, they are needed in defining the adjoint of $S$, denoted as $S^{*}: H^{-7/2}(\partial D) \rightarrow H^{-1/2}(\partial \Omega)$. However, in order to do so, we must discuss the elliptic equation
 \begin{equation} \label{adj-s}
\begin{cases} 
      - \Delta v = 0 & \text{in} \enspace \Omega \setminus D, \\
      -\mu \Delta v = 0 & \text{in} \enspace D, \\
      [\![\partial_\nu v]\!] = \mathscr{B}(v) + g & \text{on} \enspace \partial D \\
      v = 0 & \text{on} \enspace \partial \Omega.
   \end{cases}
    \end{equation}
The solvability of \eqref{adj-s} for $ g \in H^{-7/2}(\partial D)$ will need to be established. For $g \in H^{-2}(\partial D)$, the notion of solution is much simpler. In particular, as similarly shown for \eqref{bvp}, the solvability of \eqref{adj-s} is understood in the variational sense as described below.  
\begin{definition} \label{def-adj_H-2}
    If $g \in  H^{-2}(\partial D)$, then {\color{black} we say} $v\in \mathcal{H} \cap H^{1}_{0}(\Omega)$ satisfies \eqref{adj-s} if
\begin{equation}\label{var-form-aux}
     \int_{\Omega \setminus D} \nabla v \cdot \nabla {\varphi} \, \text{d}x + \int_{D} \mu \nabla v \cdot \nabla {\varphi} \, \text{d}x + \int_{\partial D} \mu_s \partial_{s} v \partial_{s} {\varphi} \, \text{d}s + \int_{\partial D}  \mu_s \ell_s^2 \partial^{2}_{s} v \partial^{2}_{s}  {\varphi} \, \text{d}s + \int_{\partial D}  g {\varphi} \, \text{d}s= 0
\end{equation}
for all $\varphi \in \mathcal{H} \cap H^{1}_{0}(\Omega)$.
\end{definition}

In particular, for $g$ given by a constant, there exists a unique solution to \eqref{adj-s} in the form of Definition \ref{def-adj_H-2}.  However, if $g \in H^{-7/2}(\p D) \backslash H^{-2}(\p D)$, then variational methods do not apply directly to \eqref{adj-s}. Thus, we will define in what sense an element $v \in H^1_0(\Omega)$ satisfies \eqref{adj-s} in the case that $g \in \mathrm{span}\{1\}^\perp \subset H^{-7/2}(\partial D)$. Note that ${\rm Null}(\mathscr{B}) = {\rm span} \{1\}$ on $\partial D$. Thus, in what follows, we define the pseudoinverse operator 
\begin{equation}\label{eqn:b-sharp}
    \mathscr{B}^{\#} : {\rm span} \{1\}^{\perp} \subset H^{q}(\partial D) \rightarrow {\rm span} \{1\}^{\perp} \subset H^{q+4}(\partial D)
\end{equation}
for any $q \in (-\infty , \infty)$, where for $\phi_n$ as in \eqref{eig-sys}, we have the convention
\begin{equation}
    \label{eqn:Bsharpdef}
   \mathscr{B}^{\#} \phi_0 = 0, \ \  \mathscr{B}^{\#} \phi_n = (\mu_s \lambda_n + \mu_s \ell_s \lambda_n^2)^{-1} \phi_n \ \text{for} \ n \geq 1 .
\end{equation}

We also define $\Lambda_{\Omega \setminus D}$ as the exterior DtN map on $\partial D$, i.e. $\Lambda_{\Omega\setminus D} f = \partial_{\nu} \omega^{+} \rvert_{\partial D}$, where $\omega$ satisfies  
      $$- \Delta \omega = 0  \enspace \text{in} \enspace \Omega \setminus D \quad  \text{with} \quad \omega = 0  \enspace \text{on} \enspace \partial \Omega , \quad \text{and} \quad \omega =  f  \enspace \text{on} \enspace \partial D.$$ 

\begin{definition} \label{def-adj}
    If $g \in {\rm span}\{1\}^{\perp} \subset H^{-7/2}(\partial D)$, then {\color{black} we say} $v\in H^{1}_{0}(\Omega)$ satisfies \eqref{adj-s} if
    \begin{enumerate}
        \item $f := v \rvert_{\partial D}$ satisfies
    \begin{align}
        \int_{\partial D}\Lambda_{\Omega \setminus D} f \, \text{d}s = 0, \enspace \text{and} \label{exterior-dtn-mean}\\
        {\color{black}-}\mathscr{B}^{\#} g = f - \fint_{\partial D} f \, \text{d}s - \mathscr{B}^{\#} (\Lambda_{\Omega \setminus D} - \mu \Lambda_D)f \label{pseudo-g}
    \end{align}
    and 
        \item  $- \Delta v = 0  \enspace \text{in} \enspace \Omega \setminus D \quad  \text{and} \quad -\mu\Delta v= 0  \enspace \text{in} \enspace D , \quad \text{with} \quad v =  0  \enspace \text{on} \enspace \partial \Omega .$
    \end{enumerate}
\end{definition}  

We show that Definition 3.1 and Definition 3.2 are equivalent when $g \in {\rm span} \{1\}^{\perp} \subset H^{-2}(\partial D)$. From Definition \ref{def-adj}, we observe that if $g \in {\rm span} \{1\}^{\perp} \subset H^{-2}(\partial D)$, then 
$\mathscr{B}^{\#}g \in H^{2}(\partial D)$ and $\mathscr{B}^{\#}(\Lambda_{\Omega \setminus D} - \mu \Lambda_D)f \in H^{7/2}(\partial D)$. Thus, \eqref{pseudo-g} implies that 
$$f = {\color{black}-}\mathscr{B}^{\#}g + \fint_{\partial D} f \, \text{d}s + \mathscr{B}^{\#}(\Lambda_{\Omega \setminus D} - \mu \Lambda_D)f \in H^{2}(\partial D)$$
and that 
\begin{align*}
    -g = \mathscr{B}(f) - (\Lambda_{\Omega \setminus D} - \mu\Lambda_D)f \enspace \text{on} \enspace \partial D \quad \iff \quad [\![\partial_\nu v ]\!] = \mathscr{B}(v) +g \enspace \text{on} \enspace \partial D.
\end{align*}
 Thus, if $v$ satisfies Definition \ref{def-adj}, then it also satisfies Definition \ref{def-adj_H-2}. Conversely, suppose that $v$ satisfies Definition \ref{def-adj_H-2} with $g \in \text{span}\{1\}^{\perp} \subset H^{-2}(\partial D)$. By integration by parts and the definition of $\mathscr{B}$ as given in \eqref{b-op}, we observe that \eqref{var-form-aux} can be written as 
 \[ - \int_{\partial D}(\Lambda_{\Omega \setminus D} v - \mu \Lambda_{D} v ) \varphi \, \text{d}s + \int_{\partial D}  \big( \mathscr{B}(v) + g \big) \varphi \, \text{d}s = 0 \]
 for all $\varphi \in \mathcal{H} \cap H^{1}_{0}(\Omega)$. Thus, it holds that 
 \[ {\color{black}-}g = \mathscr{B}(v) - (\Lambda_{\Omega \setminus D} - \mu \Lambda_{D}) v = \mathscr{B}(v - \fint_{\partial D} v \, \text{d}s ) - (\Lambda_{\Omega \setminus D} - \mu \Lambda_{D}) v,\]
 where the last equality is given by the fact that $\text{Null}(\mathscr{B})= \text{span}\{1\}$. Given that $g \in \text{span}\{1\}^{\perp} \subset H^{-2}(\partial D)$
 we have that 
 \[ \int_{\partial D} \Lambda_{\Omega \setminus D} v \, \text{d}s = \int_{\partial D} \big( \mathscr{B} (v - \fint_{\partial D} v \, \text{d}s) + \mu \Lambda_D v \big) \, \text{d}s . \]
 Using integration by parts, one can see that $\int_{\partial D} \big( \mathscr{B} (v - \fint_{\partial D} v \, \text{d}s) \, \text{d}s= 0 $. Furthermore, since $g, \Lambda_D v$, and $\mathscr{B}(v - \fint_{\partial D} v \, \text{d}s)$ all have mean $0$, so does $\Lambda_{\Omega \setminus D} v$ and hence \eqref{exterior-dtn-mean} holds. Moreover, considering that  $v\rvert_{\partial D} - \fint_{\partial D} v \, \text{d}s \in \text{span}\{1\}^{\perp}$, it follows that
 \[{\color{black}-}\mathscr{B}^{\#} g = v - \fint_{\partial D} v \, \text{d}s - \mathscr{B}^{\#} (\Lambda_{\Omega \setminus D} - \mu \Lambda_{D}) v,\]
satisfying \eqref{pseudo-g}. Therefore, Definitions \ref{def-adj} and \ref{def-adj_H-2} are equivalent for the case where $g \in \text{span}\{1\}^{\perp} \subset H^{-2}(\partial D)$.

We will address the solvability of \eqref{adj-s} of solutions for $g \in  H^{-2}(\partial D)$ and $g \in {\rm span}\{1\}^{\perp} \subset H^{-7/2} (\partial D)$ in the Lemmas that follow. However, with Definitions \ref{def-adj_H-2} and \ref{def-adj} in place as notions of solutions for \eqref{adj-s}, we will first state our main result regarding $S^*$. 

\begin{theorem}\label{adj-s-def}
The adjoint operator \( S^* : H^{-7/2}(\partial D) \to H^{-1/2}(\partial \Omega) \) is given by
\[
S^*g = \left.\partial_{\nu} v\,\right|_{\partial \Omega},
\]
where \( v \in H^1_0(\Omega) \) is given by the sum \( v = v_0 + v_1 \) such that:
\begin{itemize}
    \item \( v_0 \in H^1_0(\Omega) \) solves \eqref{adj-s} with mean-zero data
    \[
    g_0 := g - \fint_{\partial D} g \, ds \in H^{-7/2}(\partial D),
    \]
    in the sense of Definition~\ref{def-adj}, and
    \item \( v_1 \in H^1_0(\Omega) \) solves \eqref{adj-s} with constant data
    \[
    g_1 := \fint_{\partial D} g \, ds \in H^{-2}(\partial D),
    \]
    in the sense of Definition~\ref{def-adj_H-2}.
\end{itemize}
\end{theorem}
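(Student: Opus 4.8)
The plan is to verify the defining adjoint identity directly: for every $f \in H^{1/2}(\partial \Omega)$ and $g \in H^{-7/2}(\partial D)$,
\[
\langle Sf, g \rangle_{\partial D} = \langle f, \partial_\nu v|_{\partial \Omega} \rangle_{\partial \Omega},
\]
where $u$ solves \eqref{bvp} with datum $f$ (so $Sf = u|_{\partial D}$) and $v = v_0 + v_1$ is assembled from the two adjoint solutions as in the statement. Since the adjoint of a bounded operator is unique, establishing this identity for all such $f,g$ proves the asserted formula for $S^*$. Using linearity together with the splitting $g = g_0 + g_1$, where $g_1 = \fint_{\partial D} g\, ds$ is constant and $g_0$ has mean zero, it suffices to prove the identity separately for the pair $(g_0, v_0)$ and for the pair $(g_1, v_1)$.

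The core of the argument is a transmission Green's identity. By Lemma \ref{add-reg} the solution $u$ is regular on the interface, $u|_{\partial D} \in H^{7/2}(\partial D)$, so its one-sided normal derivatives $\partial_\nu u^{\pm}$ and the jump $[\![\partial_\nu u]\!] = \mathscr{B}(u)$ are well defined. Applying Green's second identity to $u$ and $v_i$ separately on $\Omega \setminus D$ and on $D$ (on both of which they are harmonic, with the weight $\mu$ on $D$), the $\partial \Omega$ contributions collapse because $v_i = 0$ and $u = f$ there, producing $\langle f, \partial_\nu v_i|_{\partial \Omega}\rangle_{\partial \Omega}$. Writing $f_i := v_i|_{\partial D}$ and expressing the one-sided normal derivatives of $v_i$ through the exterior and interior Dirichlet-to-Neumann maps, $\partial_\nu v_i^{+} = \Lambda_{\Omega \setminus D} f_i$ and $\partial_\nu v_i^{-} = \Lambda_D f_i$, the two subdomain identities combine to
\[
\langle f, \partial_\nu v_i|_{\partial \Omega}\rangle_{\partial \Omega} = \int_{\partial D} u\, (\Lambda_{\Omega \setminus D} - \mu \Lambda_D) f_i \, ds - \int_{\partial D} f_i\, [\![\partial_\nu u]\!]\, ds.
\]
The weak interface condition satisfied by $v_i$ — variational in the sense of \eqref{var-form-aux} for $i = 1$, and the $\mathscr{B}^{\#}$/DtN characterization \eqref{pseudo-g} of Definition \ref{def-adj} for $i = 0$ — reads $(\Lambda_{\Omega \setminus D} - \mu \Lambda_D) f_i = \mathscr{B}(f_i) + g_i$. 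Substituting this together with $[\![\partial_\nu u]\!] = \mathscr{B}(u)$ reduces the right-hand side to
\[
\int_{\partial D} \bigl( u\, \mathscr{B}(f_i) - f_i\, \mathscr{B}(u)\bigr)\, ds + \langle u|_{\partial D}, g_i \rangle_{\partial D},
\]
and the first integral vanishes by the formal self-adjointness of $\mathscr{B}$ on the closed curve $\partial D$, leaving exactly $\langle Sf, g_i\rangle_{\partial D}$.

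The delicate point, where I expect the main obstacle, is the low regularity of the mean-zero piece. For the constant datum $g_1 \in H^{-2}(\partial D)$ the solution $v_1 \in \mathcal{H} \cap H^1_0(\Omega)$ has $f_1 \in H^2(\partial D)$, so every pairing above is classical and $\mathscr{B}$ is self-adjoint in the usual way. For $g_0$, however, \eqref{pseudo-g} together with the four-derivative smoothing of $\mathscr{B}^{\#}$ yields only $f_0 = v_0|_{\partial D} \in H^{1/2}(\partial D)$. Consequently the two Green's identities must be interpreted as $H^{1/2}$–$H^{-1/2}$ duality statements for the harmonic extension $v_0$ and its DtN traces, and — most critically — the cancellation $\int_{\partial D}(u\,\mathscr{B}(f_0) - f_0\,\mathscr{B}(u))\, ds = 0$ must be justified across the \emph{mismatched} spaces $u|_{\partial D} \in H^{7/2}$ and $f_0 \in H^{1/2}$. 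I would settle this in the eigenbasis $\{\phi_n\}$ of \eqref{eig-sys}: with $u|_{\partial D} = \sum_n u_n \phi_n$, $f_0 = \sum_n (f_0)_n \phi_n$, and $\mathscr{B}\phi_n = (\mu_s \lambda_n^2 + \mu_s \ell^2 \lambda_n^4)\phi_n$, both $\langle u|_{\partial D}, \mathscr{B}(f_0)\rangle$ and $\langle \mathscr{B}(u), f_0\rangle$ reduce to the single series $\sum_{n \geq 1}(\mu_s \lambda_n^2 + \mu_s \ell^2 \lambda_n^4)\, u_n (f_0)_n$, which converges absolutely by Cauchy–Schwarz since $\lambda_n^4 = \lambda_n^{7/2}\lambda_n^{1/2}$ while $\sum \lambda_n^7 |u_n|^2 < \infty$ and $\sum \lambda_n |(f_0)_n|^2 < \infty$. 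This spectral bookkeeping both legitimizes the interface pairings and forces the decomposition $g = g_0 + g_1$: the pseudoinverse $\mathscr{B}^{\#}$ inverts $\mathscr{B}$ only on $\mathrm{span}\{1\}^{\perp}$, so the constant mode must be removed and handled by the variational Definition \ref{def-adj_H-2}. Throughout I would invoke the solvability of \eqref{adj-s} for both data classes, established in the lemmas that follow, to guarantee that $v_0$, $v_1$, and hence $v$, exist and depend continuously on $g$.
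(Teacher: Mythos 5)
Your proof is correct and follows essentially the same route as the paper's: Green's second identity applied on $\Omega \setminus D$ and $D$, the transmission conditions $[\![\partial_\nu u]\!] = \mathscr{B}(u)$ and $[\![\partial_\nu v]\!] = \mathscr{B}(v) + g$ (equivalently $(\Lambda_{\Omega\setminus D} - \mu\Lambda_D)f_i = \mathscr{B}(f_i)+g_i$ through the DtN maps), the self-adjointness of $\mathscr{B}$ on the closed curve $\partial D$, and the solvability of \eqref{adj-s} deferred to the lemmas that follow. The only differences are ones of added care, and they are genuine improvements rather than a different method: your explicit splitting $g = g_0 + g_1$ simply mirrors the theorem statement, and your eigenbasis verification that $\int_{\partial D}\bigl(u\,\mathscr{B}(f_0) - f_0\,\mathscr{B}(u)\bigr)\,ds = 0$ despite the mismatched regularity ($u|_{\partial D} \in H^{7/2}(\partial D)$ versus $f_0 \in H^{1/2}(\partial D)$) rigorously justifies a cancellation that the paper's terser proof performs silently.
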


\begin{proof}

We briefly assume the solvability of \eqref{adj-s} to demonstrate the derivation of the adjoint $S^{*}$. From \eqref{adj-s}, we apply a similar technique used to derive \eqref{var-form-aux} and invoke Green’s second identity to obtain
    $$ 0 = \int_{\partial \Omega} v \partial_{\nu} u - f \partial_{\nu} v \, \text{d}s + \int_{\partial D} v (\mu\partial_{\nu}u^{-} - \partial_{\nu} u^{+} ) \, \text{d}s  +  \int_{\partial D} u (\partial_{\nu} v^{+} - \mu\partial_{\nu} v^{-} ) \, \text{d}s.$$
   By the boundary condition on $\partial D$ for $u$, this reduces to
    $$\int_{\partial \Omega} f \partial_{\nu} v \, \text{d}s = \int_{\partial D} \big ( [\![\partial_\nu v ]\!] - \mathscr{B} (v) \big ) u \, \text{d}s. $$
    Thus, we have that
    \begin{equation}
    \label{eqn:adjSrel}
    \langle Sf , g \rangle_{\partial D} = \int_{\partial D}u {g} \, \text{d}s = \int_{\partial \Omega} f \partial_{\nu} v \, \text{d}s =  \langle f , S^{*} g \rangle_{\partial \Omega}
        \end{equation}
    for all $f \in H^{1/2}(\partial \Omega)$ and $g \in H^{-7/2}(\partial D)$, which implies that $S^{*} g = \partial_{\nu} v \rvert_{\partial \Omega}$.
    \end{proof} 

Note that $S^{*}g$ also agrees with Definition \ref{def-adj} using \eqref{pseudo-g} for the case when $g \in H^{-2}(\partial D)$. We now discuss the solvability of \eqref{adj-s} in accordance to Definitions \ref{def-adj_H-2} and \ref{def-adj}.

\begin{lemma}
    \label{lem:H-2existence}
    There exists a unique solution $v \in \cl H \cap H_0^1 (\Omega)$ of \eqref{adj-s} in the sense of Definition \ref{def-adj_H-2} for any $g \in H^{-2} (\partial D)$.
\end{lemma}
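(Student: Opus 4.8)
The plan is to observe that Definition \ref{def-adj_H-2} is nothing but a variational problem governed by the \emph{same} sesquilinear form $A(\cdot,\cdot)$ already introduced for the direct problem, so that existence and uniqueness will follow from the Lax--Milgram lemma once the right-hand side is shown to be a bounded functional. Concretely, \eqref{var-form-aux} asks for $v \in \mathcal{H}\cap H^1_0(\Omega)$ satisfying
$$ A(v,\varphi) = -\int_{\partial D} g\,\varphi\, \text{d}s =: L_g(\varphi) \qquad \text{for all } \varphi \in \mathcal{H}\cap H^1_0(\Omega), $$
where $A$ is precisely the form appearing in \eqref{var-form-v}. By Lemma \ref{A-bdd-coercive}, $A$ is already bounded and coercive on $\mathcal{H}\cap H^1_0(\Omega)$, so the only genuinely new ingredient is the boundedness of $L_g$ for $g \in H^{-2}(\partial D)$.

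To bound $L_g$, I would read $\int_{\partial D} g\,\varphi\,\text{d}s$ as the duality pairing between $H^{-2}(\partial D)$ and $H^{2}(\partial D)$, giving $|L_g(\varphi)| \leq \|g\|_{H^{-2}(\partial D)}\,\|\varphi\|_{H^{2}(\partial D)}$. The crux is then the trace estimate that $v \mapsto v|_{\partial D}$ maps $\mathcal{H}\cap H^1_0(\Omega)$ boundedly into $H^{2}(\partial D)$, i.e.\ $\|\varphi\|_{H^{2}(\partial D)} \leq C\|\varphi\|_{\mathcal{H}}$. Expanding in the eigenbasis $\{\phi_n\}$ of \eqref{eig-sys} and using $\partial^2_s\phi_n = -\lambda_n^2\phi_n$, the $H^2(\partial D)$-norm splits as
$$ \|\varphi\|_{H^{2}(\partial D)}^2 = |\varphi_0|^2 + \sum_{n\geq 1}\lambda_n^4|\varphi_n|^2 = |\varphi_0|^2 + \|\partial^2_s\varphi\|_{L^2(\partial D)}^2. $$
The second term is directly dominated by $\|\varphi\|_{\mathcal{H}}$, while the constant (mean-value) mode obeys $|\varphi_0| \leq \|\varphi\|_{L^2(\partial D)}$, which I would control through the standard trace inequality $\|\varphi\|_{L^2(\partial D)} \leq C\|\varphi\|_{H^1(\Omega)}$ combined with the Poincar\'e inequality $\|\varphi\|_{H^1(\Omega)} \leq C\|\nabla\varphi\|_{L^2(\Omega)} \leq C\|\varphi\|_{\mathcal{H}}$, valid since $\varphi \in H^1_0(\Omega)$. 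This yields $|L_g(\varphi)| \leq C\|g\|_{H^{-2}(\partial D)}\|\varphi\|_{\mathcal{H}}$, so $L_g$ is a bounded linear functional.

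With $A$ bounded and coercive and $L_g$ bounded and linear, the Lax--Milgram lemma produces a unique $v \in \mathcal{H}\cap H^1_0(\Omega)$ solving \eqref{var-form-aux}, which is exactly the asserted solution in the sense of Definition \ref{def-adj_H-2}; uniqueness is moreover reconfirmed by the coercivity argument of Lemma \ref{cont-unique}, since the difference $w$ of two solutions satisfies $A(w,w)=0$, forcing $\|w\|_{\mathcal{H}}=0$ and, via Poincar\'e, $w=0$. The only mildly technical obstacle is verifying the embedding $\mathcal{H}\cap H^1_0(\Omega)\hookrightarrow H^{2}(\partial D)$ with correct handling of the constant mode: that mode is invisible to the seminorm built from $\partial_s$ and $\partial^2_s$ on $\partial D$, and must instead be recovered from the interior $H^1$-control through the trace theorem and Poincar\'e's inequality.
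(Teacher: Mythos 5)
Your proposal is correct and takes essentially the same route as the paper: the paper's proof is a one-line remark that existence and uniqueness follow ``via a simple variational formulation for $v \in \mathcal{H} \cap H^1_0(\Omega)$ analogous to the methods employed to prove Theorem \ref{thm:bvp-well-posed},'' i.e., Lax--Milgram with the same bounded, coercive form $A(\cdot,\cdot)$. Your write-up supplies exactly the details this analogy requires—reading $\int_{\partial D} g\varphi\,\mathrm{d}s$ as the $H^{-2}(\partial D)$--$H^{2}(\partial D)$ duality pairing and establishing the trace bound $\|\varphi\|_{H^{2}(\partial D)} \leq C\|\varphi\|_{\mathcal{H}}$ via the eigenbasis expansion, with the constant mode recovered from the trace theorem and Poincar\'e's inequality—and those details are correct.
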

\begin{proof}
We note that if $g \in H^{-2}(\partial D)$, then there exists a unique solution to \eqref{adj-s} via a simple variational formulation for $v \in \cl H \cap H^1_0(\Omega)$ analogous to the methods employed to prove Theorem \ref{thm:bvp-well-posed}. 
\end{proof}

Hence, using Lemma \ref{lem:H-2existence}, we can solve for $v_1 \in H^1_0 (\Omega)$ using classical methods so that $\partial_{\nu} v_1\rvert_{\partial \Omega} \in H^{-1/2} (\partial \Omega)$ using trace theorems.  With this characterization, we move on to show that \eqref{adj-s} is solvable with mean zero data in $H^{-7/2}(\partial D)$.
\begin{lemma}\label{adj-s-bvp}
    For all $g \in {\rm span} \{1\}^{\perp} \subset H^{-7/2}(\partial D)$, there exists a unique solution $v \in H^{1}_{0}(\Omega)$ satisfying \eqref{adj-s} in the sense of Definition \ref{def-adj}.
\end{lemma}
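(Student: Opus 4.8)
The plan is to recast the two requirements of Definition~\ref{def-adj} as a single Fredholm equation for the boundary trace $f:=v|_{\partial D}$ on the closed curve $\partial D$, solve that equation, and recover $v$ by extension. The natural space is the Hilbert space $X:=H^{1/2}(\partial D)\cap\mathrm{span}\{1\}^{\perp}$: since $g\in\mathrm{span}\{1\}^{\perp}\subset H^{-7/2}(\partial D)$, the smoothing property of $\mathscr{B}^{\#}$ in \eqref{eqn:b-sharp} gives $\mathscr{B}^{\#}g\in X$, and a trace in $H^{1/2}(\partial D)$ corresponds to an extension $v\in H^1_0(\Omega)$, exactly as the statement demands. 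First I would eliminate the mean: writing $f=f_0+\bar f$ with $f_0\in X$ and $\bar f:=\fint_{\partial D}f\,ds$, and using $\Lambda_D(1)=0$, I get $\mathscr{B}^{\#}(\Lambda_{\Omega\setminus D}-\mu\Lambda_D)f=Kf_0+\bar f\,w_0$, where $K:=\mathscr{B}^{\#}(\Lambda_{\Omega\setminus D}-\mu\Lambda_D)$ and $w_0:=\mathscr{B}^{\#}\Lambda_{\Omega\setminus D}(1)\in X$. Condition \eqref{exterior-dtn-mean} then reads $\ell_1(f_0)+a\,\bar f=0$ with $\ell_1(f_0):=\int_{\partial D}\Lambda_{\Omega\setminus D}f_0\,ds$ and $a:=\int_{\partial D}\Lambda_{\Omega\setminus D}(1)\,ds$. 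A key preliminary is $a\neq0$: if $\psi$ is harmonic in $\Omega\setminus D$ with $\psi=1$ on $\partial D$ and $\psi=0$ on $\partial\Omega$, Green's identity yields $a=-\int_{\Omega\setminus D}|\nabla\psi|^2\,dx<0$ because $\psi$ is nonconstant. Hence $\bar f=-\ell_1(f_0)/a$, and inserting this into \eqref{pseudo-g} reduces everything to the single equation $(I-\tilde K)f_0=\mathscr{B}^{\#}g$ on $X$, where $\tilde K f_0:=Kf_0-a^{-1}\ell_1(f_0)\,w_0$.

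The second step is to verify that $\tilde K$ is compact and apply the Fredholm alternative. Because the exterior and interior DtN maps are bounded $H^{1/2}(\partial D)\to H^{-1/2}(\partial D)$ while $\mathscr{B}^{\#}$ gains four derivatives with range in $\mathrm{span}\{1\}^{\perp}$, the operator $K$ sends $X$ into $H^{7/2}(\partial D)\cap\mathrm{span}\{1\}^{\perp}$; composing with the compact embedding $H^{7/2}\hookrightarrow H^{1/2}$ makes $K$ compact, and the rank-one term $f_0\mapsto a^{-1}\ell_1(f_0)w_0$ is compact as well. Thus $I-\tilde K$ is Fredholm of index zero, and it suffices to establish injectivity.

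Injectivity is the heart of the matter and where I expect the main obstacle. Suppose $(I-\tilde K)f_0=0$; setting $\bar f=-\ell_1(f_0)/a$ and $f=f_0+\bar f$, a direct computation shows that $f$ satisfies both \eqref{exterior-dtn-mean} and \eqref{pseudo-g} with $g=0$, so its extension $v\in H^1_0(\Omega)$ solves \eqref{adj-s} with $g=0$ in the sense of Definition~\ref{def-adj}. The delicate point is that concluding $v=0$ requires the coercive variational characterization, which lives on $\mathcal{H}$. This is legitimate after a regularity bootstrap: from $f_0=Kf_0+\bar f\,w_0=\mathscr{B}^{\#}(\Lambda_{\Omega\setminus D}-\mu\Lambda_D)f$ and the four-derivative smoothing of $\mathscr{B}^{\#}$ one obtains $f_0\in H^{7/2}(\partial D)$, hence $f\in H^2(\partial D)$ and $v\in\mathcal{H}\cap H^1_0(\Omega)$. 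Since $0\in\mathrm{span}\{1\}^{\perp}\cap H^{-2}(\partial D)$, the equivalence of Definitions~\ref{def-adj} and \ref{def-adj_H-2} already proved above applies, so $v$ satisfies the variational identity \eqref{var-form-aux} with $g=0$; coercivity of $A(\cdot,\cdot)$ from Lemma~\ref{A-bdd-coercive} then forces $v=0$, whence $f=0$ and $f_0=0$. Therefore $I-\tilde K$ is injective, and by the Fredholm alternative it is bijective on $X$.

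Finally I would assemble the solution: solve $(I-\tilde K)f_0=\mathscr{B}^{\#}g$ for the unique $f_0\in X$, set $\bar f=-\ell_1(f_0)/a$ and $f=f_0+\bar f$, and take $v\in H^1_0(\Omega)$ to be the function that is harmonic in $\Omega\setminus D$, $\mu$-harmonic in $D$, equal to $f$ on $\partial D$ and to $0$ on $\partial\Omega$. By construction $f$ satisfies \eqref{exterior-dtn-mean} and \eqref{pseudo-g}, so $v$ meets both conditions of Definition~\ref{def-adj}; uniqueness follows at once from the injectivity just established, since the difference of two solutions would be a homogeneous solution. The only genuinely nonroutine ingredients are the nonvanishing of the scalar $a$ (which makes the mean constraint pin down the constant mode) and the bootstrap that licenses the reduction to the coercive variational problem; the remainder is bookkeeping with the compact operators $K$ and $\tilde K$.
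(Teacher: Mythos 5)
Your proof is correct, and although it follows the same high-level strategy as the paper --- recast Definition~\ref{def-adj} as a second-kind Fredholm equation for the trace $f=v|_{\partial D}$, obtain compactness from the four-derivative smoothing of $\mathscr{B}^{\#}$ together with the compact embedding $H^{7/2}(\partial D)\hookrightarrow H^{1/2}(\partial D)$, prove injectivity by converting a homogeneous solution into a solution of a homogeneous transmission problem, and invoke the Fredholm alternative --- your handling of the constant mode is genuinely different, and in fact repairs a gap in the paper's own argument. The paper sets $Kf:=\fint_{\partial D} f\,\mathrm{d}s+\mathscr{B}^{\#}(\Lambda_{\Omega\setminus D}-\mu\Lambda_D)f$ on all of $H^{1/2}(\partial D)$ and claims simultaneously that ${\rm Null}(I-K)=\{0\}$ and ${\rm Range}(I-K)=X$, where $X$ is the codimension-one subspace cut out by \eqref{exterior-dtn-mean}; for an index-zero Fredholm operator these two statements are incompatible. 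Indeed, since $\mathscr{B}^{\#}$ has mean-zero range, $(I-K)$ maps everything into ${\rm span}\{1\}^{\perp}$, so its kernel cannot be trivial; concretely, the trace $f_1$ of the constant-data solution furnished by Lemma~\ref{lem:H-2existence} with $g=1$ satisfies $(I-K)f_1=0$ and $f_1\neq 0$. The paper's null-space computation misses this because the step from $(I-K)f=0$ to $(\Lambda_{\Omega\setminus D}-\mu\Lambda_D)f=\mathscr{B}(f)$ silently discards the constant $\fint_{\partial D}\Lambda_{\Omega\setminus D}f\,\mathrm{d}s$, which need not vanish for a general null vector. Your route avoids this trap by construction: the energy-identity lemma $a=\int_{\partial D}\Lambda_{\Omega\setminus D}(1)\,\mathrm{d}s=-\int_{\Omega\setminus D}|\nabla\psi|^2\,\mathrm{d}x<0$ (an ingredient not present in the paper) lets you solve the mean constraint \eqref{exterior-dtn-mean} for $\bar f$ exactly, so the Fredholm equation for $\tilde K$ lives on mean-zero functions, where a homogeneous solution genuinely satisfies \emph{both} conditions of Definition~\ref{def-adj} with $g=0$; your regularity bootstrap (also left implicit in the paper) then legitimately places it in $\mathcal{H}\cap H^1_0(\Omega)$ where coercivity kills it. What your approach buys is a self-contained, internally consistent proof in which the one-dimensional ambiguity is resolved by the scalar $a$; what the paper's formulation would buy, once repaired (by showing ${\rm Null}(I-K)={\rm span}\{f_1\}$ and that \eqref{exterior-dtn-mean} selects the unique representative in each coset), is a single equation on the full trace space without the rank-one correction. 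One caveat you share with the paper: the conventions in \eqref{eqn:Bsharpdef} (exponents of $\lambda_n$) and the sign matching between \eqref{pseudo-g} and \eqref{adj-s} contain typo-level inconsistencies, and your argument, like the paper's, uses the evidently intended conventions in which $\mathscr{B}\mathscr{B}^{\#}$ is the orthogonal projection onto ${\rm span}\{1\}^{\perp}$.
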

\begin{proof}
    If suffices to prove that \eqref{pseudo-g} has a unique solution. We define the Hilbert space
    $$ X:= \Bigl \{ \varphi \in H^{1/2}(\partial D) \mid \int_{\partial D} \Lambda_{\Omega \setminus D} \varphi \, \text{d}s =0 \Bigr \} $$
    and the {\color{black} compact operator $K:H^{1/2}(\partial D) \rightarrow H^{1/2}(\partial D)$ as
    \begin{align}
    Kf := \fint_{\partial D} f \, \text{d}s+ \mathscr{B}^{\#}(\Lambda_{\Omega \setminus D} - \mu \Lambda_D) f. \label{eq:Kdefinition}
    \end{align}
    The operator $K$ is compact since the right-hand side of \eqref{eq:Kdefinition} defines a bounded operator from $H^{1/2}(\partial D) \rar H^{7/2}(\partial D) \Subset H^{1/2}(\partial D).$}
    
    We claim that ${\rm Null}(I-K) = \{0\}$ and that ${\rm Range}(I-K) = X$. Indeed, by the Fredholm Alternative, we have that
    $${\rm dim} \, {\rm Null}(I-K) = {\rm dim} \, {\rm Null} (I - K^*) = {\rm dim} \, {\rm Range}(I - K)^{\perp},$$ and that ${\rm Range}(I-K)$ is closed. Here, $I$ represents the identity operator on $\partial D$. We will show that ${\rm dim} \, {\rm Null} (I - K) = 0$. To this end, suppose $(I-K) f = 0$. Then, 
    $$(\Lambda_{\Omega \setminus D} - \mu \Lambda_D ) f = \mathscr{B}(f). $$
    Let $v \in H^{1}_{0}(\Omega)$ be the unique extension satisfying 
    $$ \begin{cases} 
        \Delta v = 0 & \text{in} \enspace \Omega \setminus D, \\
        \mu \Delta v = 0 & \text{in} \enspace D, \\
        v =  f & \text{on} \enspace \partial D.
    \end{cases}
    $$
    Then, $v \in H^{1}_{0}(\Omega)$ also satisfies
    $$ \begin{cases} 
        \Delta v = 0 & \text{in} \enspace \Omega \setminus D ,\\
        \mu \Delta v = 0 & \text{in} \enspace D ,\\
         [\![\partial_\nu v ]\!] = \mathscr{B}(v) & \text{on} \enspace \partial D ,\\
        v =  0 & \text{on} \enspace \partial \Omega.
    \end{cases}
    $$
    By well-posedness of the forward problem \eqref{bvp}, since $v=0$, then $f=0$. Thus, ${\rm dim} \, {\rm Null} (I-K) = 0$, proving the claim.
\end{proof}

 We now provide a useful property of the adjoint $S^{*}$, which finalizes the argument regarding the solvability of \eqref{adj-s}.

\begin{lemma}
\label{adj-s-lem}
The adjoint $S^{*}{\color{black}:H^{-7/2}(\partial D) \rightarrow H^{-1/2}(\partial \Omega)}$ is compact and injective.
\end{lemma} 
\begin{proof}
{\color{black}For compactness, let $\Omega^{*}$ be a region that satisfies $D \subset \Omega^{*} \subset \Omega$ such that $\text{dist}(\partial D, \Omega^{*}) > 0$ and $\text{dist}(\Omega^{*}, \partial \Omega) >0$. Since $v \rvert_{\partial \Omega} = 0$, then $v \in H^{2}(\Omega \setminus \Omega^{*})$. Then, by the Trace Theorem, $\partial_{\nu} v \rvert_{\partial \Omega} \in H^{1/2}(\partial \Omega)$. The result follows from the fact that $H^{1/2}(\partial \Omega) \Subset H^{-1/2}(\partial \Omega)$.} \\
To show injectivity, suppose $g \in {\rm Null}(S^{*})$. Then $v$ satisfies
    $$    \begin{cases} 
      - \Delta v = 0 & \text{in} \enspace \Omega \setminus D, \\
      v = 0 & \text{on} \enspace \partial \Omega, \\
      \partial_{\nu}v =  0 & \text{on} \enspace \partial \Omega .
    \end{cases}
$$
By Holmgren's Theorem, $v = 0$ in $\Omega \setminus D$ which implies that $v \rvert_{\partial D} = 0$ and $\partial_{\nu} v^{+} \big \rvert_{\partial D} = 0$. Furthermore, $v$ satisfies the Dirichlet problem
$$ -\mu \Delta v = 0 \enspace \text{in} \enspace D \qquad \text{with} \qquad v = 0 \enspace \text{on} \enspace \partial D.$$
Thus, $v = 0$ in $D$, which implies that $\partial_{\nu}v^{-} \rvert_{\partial D} = 0$. We thus observe that $g = [\![\partial_\nu v]\!] - \mathscr{B}(v)  = 0$ as desired. 
\end{proof}

Combining \eqref{eqn:adjSrel} with the results from Lemmas \ref{lem:H-2existence}, \ref{adj-s-bvp} and \ref{adj-s-lem}, we observe that \eqref{adj-s} is solvable.  We return to our principal task at hand, which is to derive a symmetric factorization for the data operator $(\Lambda - \Lambda_0)$. Thus far, we have introduced and shown some properties of the well-defined maps $S:H^{1/2}(\partial \Omega) \rightarrow H^{7/2}(\partial D)$ and its adjoint $S^{*} : H^{-7/2}(\partial D) \rightarrow H^{-1/2}(\partial \Omega)$. In order to complete a symmetric factorization of $(\Lambda - \Lambda_0)$, we need to define a middle operator $T$ that connects $H^{7/2}(\partial D)$ with its dual space. Recall that $w \in H^{1}_0 (\Omega)$ is the unique solution to equation \eqref{aux-prob} where
\begin{align*}
[\![\partial_\nu w ]\!] \big \rvert_{\partial D} &= \mathscr{B}(h) - (1-\mu) \Lambda_D h + (1-\mu) \partial_{\nu} w^{-}\\ 
&=\mathscr{B}(w) + \mathscr{B}(h-w) - (1-\mu) \Lambda_D h + (1-\mu) \partial_{\nu} w^{-}.
\end{align*}
Thus, by the well-posedness of \eqref{adj-s} and Definition \ref{adj-s-def}, we have that
$$\partial_{\nu} w \big \rvert_{\partial \Omega} = Gh \qquad \text{as well as} \qquad \partial_{\nu} w \big \rvert_{\partial \Omega} = S^{*} \big (\mathscr{B}(h-w) - (1-\mu) \Lambda_D h + (1-\mu) \partial_{\nu} w^{-} \big).$$
Motivated by this, we define the operator
\begin{equation}\label{T}
    T:H^{7/2}(\partial D) \rightarrow H^{-7/2}(\partial D) \quad \text{given by} \quad Th = \mathscr{B}(h-w) - (1-\mu) \Lambda_D h + (1-\mu) \partial_\nu w^-.
\end{equation}
Here, $h \in H^{-7/2}(\partial D)$ and $w \in H^{1}_0 (\Omega)$ are as in \eqref{aux-prob}. Notice that if $h \in {\rm span} \{1\}$, then by \eqref{aux-prob}, we have that $w=0$, i.e. if $h \in {\rm span} \{1\}$, then $Th = 0$. By the well-posedness of \eqref{aux-prob}, $T$ is a bounded linear operator. In fact, for any $\xi \in H^{7/2}(\partial D)$
\begin{align*}
    \langle \xi , Th \rangle_{\partial D} &= \int_{\partial D} \xi \big [\mathscr{B}(h-w) - (1-\mu) \Lambda_D h + (1-\mu) \partial_\nu w^- \big ] \, \text{d}s \\
    &= \int_{\partial D} \xi \mathscr{B}(h) \, \text{d}s - \int_{\partial D} \xi \mathscr{B}(w) \, \text{d}s - (1-\mu) \int_{\partial D} \xi \Lambda_D h \, \text{d}s + (1-\mu) \int_{\partial D} \xi \partial_\nu w^- \, \text{d}s.
\end{align*}
For the first integral, we have that 
\begin{align*}
    \int_{\partial D} \xi \mathscr{B}(h) \, \text{d}s &\leq \| \xi \|_{H^{1/2}(\partial D)} \|\mathscr{B}(h) \|_{H^{-1/2}(\partial D)}\\ 
    &\leq  \| \xi \|_{H^{7/2}(\partial D)} \| h\|_{H^{7/2}(\partial D)}.
\end{align*}
For the second integral, 
\begin{align*}
    \int_{\partial D} \xi \mathscr{B}(w) \, \text{d}s &\leq \| \xi \|_{H^{7/2}(\partial D)} \|\mathscr{B}(w) \|_{H^{-7/2}(\partial D)} \\
    &\leq \| \xi \|_{H^{7/2}(\partial D)} \|w \|_{H^{1/2}(\partial D)} \\
    &\leq C \| \xi \|_{H^{7/2}(\partial D)} \|w \|_{H^{1}_0(\Omega)} \\
    &\leq C  \| \xi \|_{H^{7/2}(\partial D)}  \| h \|_{H^{7/2}(\partial D)},
\end{align*}
where we used the trace theorem and well-posedness of \eqref{aux-prob}. For the third integral, 
\begin{align*}
    \int_{\partial D} \xi \Lambda_D h \, \text{d}s &\leq \|\xi \|_{H^{7/2}(\partial D)} \| \Lambda_D h\|_{H^{-7/2}(\partial D)} \\
    &\leq C \|\xi \|_{H^{7/2}(\partial D)} \| h\|_{H^{-5/2}(\partial D)} \\
    &\leq C\|\xi \|_{H^{7/2}(\partial D)} \| h\|_{H^{7/2}(\partial D)},
\end{align*}
where used the boundedness of $\Lambda_D$. Lastly, for the fourth integral,
\begin{align*}
    \int_{\partial D} \xi \partial_{\nu}w^- \, \text{d}s &\leq \| \xi \|_{H^{1/2}(\partial D)} \| \partial_{\nu}w^-  \|_{H^{-1/2}(\partial D)} \\ 
    &\leq  C \| \xi \|_{H^{7/2}(\partial D)} \| w \|_{H^{1}(\Omega)} \\
    &\leq C \| \xi \|_{H^{7/2}(\partial D)} \| h \|_{H^{7/2}(\partial D)},
\end{align*}
where we used the Neumann trace theorem  and the well-posedness of \eqref{aux-prob}. Therefore,
 $$\langle \xi , Th \rangle_{\partial D} \leq C \| \xi \|_{H^{7/2}(\partial D)} \| h \|_{H^{7/2}(\partial D)}$$
 for some positive constant $C$. Recall that we had already established that $(\Lambda - \Lambda_0) = GS$. Based on our results above, we have factorized the operator $G$ as $G = S^* T$. This gives the following result.

\begin{theorem}\label{initial-factorization}
    The difference of the $DtN$ operators $(\Lambda - \Lambda_0):H^{1/2}(\partial D) \rightarrow H^{-1/2}(\partial D)$ has the following symmetric factorization 
    \begin{equation}
     \label{eqn:factorization1}
     (\Lambda - \Lambda_0) = S^{*}TS.
    \end{equation}
\end{theorem}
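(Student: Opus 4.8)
The plan is to promote the preliminary factorization $(\Lambda - \Lambda_0) = GS$, which has already been established, to the symmetric form by showing that the operator $G$ itself factors as $G = S^{*}T$. Granting this, substitution gives $(\Lambda - \Lambda_0) = GS = S^{*}TS$, which is precisely \eqref{eqn:factorization1}. Since $S$, $T$, and $S^{*}$ have all been shown to be well-defined and bounded between the stated spaces, the entire content of the theorem reduces to verifying the single operator identity $G = S^{*}T$.

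To prove $G = S^{*}T$, I would fix $h \in H^{7/2}(\partial D)$ and let $w \in H^{1}_{0}(\Omega)$ be the unique solution of \eqref{aux-prob}, so that $Gh = \partial_{\nu} w|_{\partial \Omega}$ by definition. The decisive step is the algebraic rewriting of the transmission condition already recorded just above the statement: inserting $\pm\mathscr{B}(w)$ into the jump relation for $w$ yields
\[
[\![\partial_\nu w]\!] = \mathscr{B}(h) - (1-\mu)\Lambda_D h + (1-\mu)\partial_\nu w^- = \mathscr{B}(w) + Th,
\]
where the remaining terms are recognized as exactly $Th$ from \eqref{T}. Together with $-\Delta w = 0$ in $\Omega \setminus D$, $-\mu\Delta w = 0$ in $D$, and $w = 0$ on $\partial \Omega$, this exhibits $w$ as a solution of the adjoint problem \eqref{adj-s} with data $g = Th$.

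I would then invoke the characterization of the adjoint in Theorem \ref{adj-s-def}: for any $g \in H^{-7/2}(\partial D)$, the associated solution $v$ of \eqref{adj-s} produces $S^{*}g = \partial_\nu v|_{\partial \Omega}$. Applying this with $g = Th$ and identifying $v$ with $w$ via uniqueness, I obtain $S^{*}(Th) = \partial_\nu w|_{\partial \Omega} = Gh$. Since $h$ was arbitrary, $G = S^{*}T$, and the desired factorization follows at once.

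The main obstacle is the bookkeeping forced by the nontrivial null space $\mathrm{Null}(\mathscr{B}) = \mathrm{span}\{1\}$: the data $g = Th$ lies in $H^{-7/2}(\partial D)$ and is generally not mean-zero, so Theorem \ref{adj-s-def} applies only after splitting $Th$ into its mean-zero part and its constant part, the former handled by Lemma \ref{adj-s-bvp} in the sense of Definition \ref{def-adj} and the latter by Lemma \ref{lem:H-2existence} in the sense of Definition \ref{def-adj_H-2}. The real work is confirming that $w$, constructed as the solution of \eqref{aux-prob}, actually coincides with the superposition $v_0 + v_1$ built in Theorem \ref{adj-s-def}; this follows from uniqueness of \eqref{adj-s} within each regularity class, but one must verify the mean-zero compatibility \eqref{exterior-dtn-mean} for the mean-zero component so that Definition \ref{def-adj} genuinely applies. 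Once this consistency is secured, the identity $S^{*}(Th) = Gh$ is unambiguous and the proof concludes.
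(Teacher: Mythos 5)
Your proposal is correct and follows essentially the same route as the paper: the paper likewise rewrites the transmission condition of \eqref{aux-prob} as $[\![\partial_\nu w]\!] = \mathscr{B}(w) + Th$ by inserting $\pm\mathscr{B}(w)$, recognizes $w$ as the solution of \eqref{adj-s} with data $g = Th$, and invokes Theorem \ref{adj-s-def} to conclude $Gh = \partial_\nu w|_{\partial\Omega} = S^{*}(Th)$, whence $(\Lambda - \Lambda_0) = GS = S^{*}TS$. Your additional care about splitting $Th$ into mean-zero and constant parts (Lemma \ref{adj-s-bvp} and Lemma \ref{lem:H-2existence}) is exactly what the paper's terse citation of ``the well-posedness of \eqref{adj-s} and Definition \ref{adj-s-def}'' implicitly relies on.
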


The fact that ${\rm Null} (\mathscr{B}) = {\rm span} \{1\}$, i.e. the fourth-order, boundary operator $\mathscr{B}$ defined on $\partial D$ has a non-trivial null space, has direct consequences on the coercivity of $T$ and the injectivity of $(\Lambda - \Lambda_0)$. Namely, $T$ is not coercive on all of $H^{7/2}(\partial D)$ and $\text{Null} (\Lambda - \Lambda_0) \neq \{0\}$. This greatly differs from past works (see e.g. \cite{granados1, granados2, harris0}) that also use the regularized factorization method for shape reconstruction. As written, the factorization of our data operator \eqref{eqn:factorization1} is not sufficient to solve the inverse shape problem, but it sets the foundation to prove some useful properties. The rest of this section is dedicated to deriving a more detailed factorization that will ultimately allow us to reconstruct $D$ from the knowledge of $(\Lambda - \Lambda_0)$. We proceed by investigating a sufficient condition in which the operator $T$ is coercive. Consistent with the strategy used to show the solvability of \eqref{adj-s}, we prove the coercivity of the operator $T$ for mean zero functions. 

\begin{theorem}\label{t-coercive}
There exists $\mu_0 \in (0,1)$ such that for all $h \in H^{7/2}(\partial D)$ with mean zero, we have $\langle h , Th\rangle_{\partial D} \geq C \| h \|_{H^{7/2}(\partial D)}$ for all $\mu > \mu_0$, where $T$ is defined in \eqref{T}.
\end{theorem}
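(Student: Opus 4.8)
The plan is to turn the bilinear form $\langle h, Th\rangle_{\partial D}$ into the sum of a manifestly nonnegative surface term, a bulk Dirichlet energy that will carry the full $H^{7/2}$ strength, and a remainder proportional to $1-\mu$ that can be absorbed once $\mu$ is close enough to $1$. First I would record two reductions. Writing $\mathscr{B}(h) = \mathscr{B}(w) + \mathscr{B}(h-w)$ in the transmission condition of \eqref{aux-prob}, definition \eqref{T} collapses to $Th = [\![\partial_\nu w]\!] - \mathscr{B}(w)$. Next, substituting $[\![\partial_\nu w]\!] = \partial_\nu w^+ - \mu\,\partial_\nu w^-$ into the jump condition of \eqref{aux-prob} and cancelling the $\partial_\nu w^-$ contributions shows that $w$ solves the \emph{ordinary} transmission problem: $w$ is harmonic in $\Omega\setminus D$ and in $D$, $w=0$ on $\partial\Omega$, and
\[ \partial_\nu w^+ - \partial_\nu w^- = F := \mathscr{B}(h) - (1-\mu)\Lambda_D h \qquad\text{on }\partial D. \]
In particular $w$ is continuous across $\partial D$; setting $\psi := w|_{\partial D}$ we have $\partial_\nu w^- = \Lambda_D\psi$, $\partial_\nu w^+ = \Lambda_{\Omega\setminus D}\psi$, hence $(\Lambda_{\Omega\setminus D}-\Lambda_D)\psi = F$, and testing the weak form against $w$ itself yields the energy identity $\int_\Omega|\nabla w|^2\,dx = -\langle\psi, F\rangle_{\partial D}$.

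Then I would expand $\langle h, Th\rangle_{\partial D} = \langle h,[\![\partial_\nu w]\!]\rangle_{\partial D} - \langle h,\mathscr{B}(w)\rangle_{\partial D}$ using three facts: the symmetry of $\mathscr{B}$ on the closed curve $\partial D$, so that $\langle h,\mathscr{B}(w)\rangle_{\partial D} = \langle w,\mathscr{B}(h)\rangle_{\partial D}$; the relation $[\![\partial_\nu w]\!] = F + (1-\mu)\partial_\nu w^-$; and the self-adjointness of $\Lambda_D$ together with $\partial_\nu w^- = \Lambda_D\psi$, which produces the cancellation $\langle h,\partial_\nu w^-\rangle_{\partial D} = \langle w,\Lambda_D h\rangle_{\partial D}$. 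Inserting the energy identity to rewrite $-\langle w,\mathscr{B}(h)\rangle_{\partial D}$, I expect all of the $(1-\mu)\langle h,\partial_\nu w^-\rangle_{\partial D}$ terms to cancel, leaving the clean identity
\[ \langle h, Th\rangle_{\partial D} = \langle h,\mathscr{B}(h)\rangle_{\partial D} + \int_\Omega |\nabla w|^2\,dx - (1-\mu)\,\langle h,\Lambda_D h\rangle_{\partial D}. \]
Here $\langle h,\mathscr{B}(h)\rangle_{\partial D} = \mu_s\|\partial_s h\|_{L^2(\partial D)}^2 + \mu_s\ell^2\|\partial_s^2 h\|_{L^2(\partial D)}^2\ge 0$ and $\langle h,\Lambda_D h\rangle_{\partial D} = \int_D|\nabla \widetilde h|^2\,dx\ge 0$, where $\widetilde h$ is the harmonic extension of $h$ into $D$.

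The crux, and the step I expect to be the main obstacle, is the lower bound $\int_\Omega|\nabla w|^2\,dx \gtrsim \|h\|_{H^{7/2}(\partial D)}^2$. From $(\Lambda_{\Omega\setminus D}-\Lambda_D)\psi = F$ and the energy identity one gets $\int_\Omega|\nabla w|^2\,dx = \langle M^{-1}F, F\rangle_{\partial D}$ with $M := \Lambda_D - \Lambda_{\Omega\setminus D}$, which is \emph{independent of} $\mu$. Since $F$ is mean zero (both $\mathscr{B}(h)$ and $\Lambda_D h$ integrate to zero over the closed curve), I would work on the mean-zero subspace, where $M$ is a positive, first-order elliptic operator with $\langle\phi,M\phi\rangle_{\partial D}\sim\|\phi\|_{H^{1/2}(\partial D)}^2$ by the trace/extension characterization of the interior and exterior Dirichlet energies; consequently $\langle M^{-1}F, F\rangle_{\partial D}\sim\|F\|_{H^{-1/2}(\partial D)}^2$ with $\mu$-independent constants. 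Finally, using the eigensystem \eqref{eig-sys} and $\mathscr{B}\phi_n = (\mu_s\lambda_n^2+\mu_s\ell^2\lambda_n^4)\phi_n$, one checks $\|\mathscr{B}(h)\|_{H^{-1/2}(\partial D)}^2 \sim \sum_{n\ge1}\lambda_n^{7}|h_n|^2 = \|h\|_{H^{7/2}(\partial D)}^2$, so that for $\mu$ near $1$, $\|F\|_{H^{-1/2}}\ge\|\mathscr{B}(h)\|_{H^{-1/2}} - (1-\mu)\|\Lambda_D h\|_{H^{-1/2}}\ge (c-(1-\mu)C)\|h\|_{H^{7/2}(\partial D)}$. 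This identification of the bulk energy with the full $H^{7/2}$ norm—genuinely using the fourth-order symbol of $\mathscr{B}$ and the order $-1$ smoothing of $M^{-1}$—is the delicate point; for $\mu\ge1$ the subtracted surface term in the identity turns favorable and only reinforces the estimate.

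To conclude, I would discard the nonnegative term $\langle h,\mathscr{B}(h)\rangle_{\partial D}$ and bound the remainder by $(1-\mu)\langle h,\Lambda_D h\rangle_{\partial D}\le (1-\mu)C\|h\|_{H^{1/2}(\partial D)}^2\le(1-\mu)C\|h\|_{H^{7/2}(\partial D)}^2$. The identity then yields $\langle h, Th\rangle_{\partial D}\ge \big(c_0-(1-\mu)C\big)\|h\|_{H^{7/2}(\partial D)}^2$ on a fixed neighborhood of $\mu=1$, where $c_0$ is the uniform constant from the bulk-energy lower bound. Choosing $\mu_0\in(0,1)$ close enough to $1$ that $(1-\mu_0)C<c_0$ (and large enough that the lower bound on $\|F\|_{H^{-1/2}}$ is uniform there) gives the asserted coercivity for all $\mu>\mu_0$.
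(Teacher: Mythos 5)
Your derivation of the identity
\[
\langle h, Th\rangle_{\partial D}
= \langle h,\mathscr{B}(h)\rangle_{\partial D}
+ \int_\Omega |\nabla w|^2\,dx
- (1-\mu)\,\langle h,\Lambda_D h\rangle_{\partial D}
\]
is correct and agrees with the identity at the heart of the paper's proof (the paper reaches it by substituting the jump condition of \eqref{aux-prob}, applying Green's identity, and using the self-adjointness of $\Lambda_D$; your route through $F:=\mathscr{B}(h)-(1-\mu)\Lambda_D h$ and the observation that $w$ solves the ordinary transmission problem $\partial_\nu w^+ - \partial_\nu w^- = F$ is equivalent). Where you genuinely diverge is in how the $H^{7/2}$ strength is produced. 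The paper keeps the surface term, proves positivity of $\mathscr{B}-(1-\mu)\Lambda_D$ via the decomposition $\Lambda_D=\sqrt{-\partial_s^2}+B$ and the polynomial $u_\mu$, and then upgrades the resulting $H^2$ control plus the bulk energy to $H^{7/2}$ through the a priori bound \eqref{eqn:wbd}. You instead discard the surface term and extract $H^{7/2}$ from the bulk energy alone, writing $\int_\Omega|\nabla w|^2 = \langle M^{-1}F,F\rangle_{\partial D}\gtrsim \|F\|^2_{H^{-1/2}(\partial D)}$ with $M=\Lambda_D-\Lambda_{\Omega\setminus D}$ independent of $\mu$, and then using $\|\mathscr{B}(h)\|_{H^{-1/2}(\partial D)}\sim\|h\|_{H^{7/2}(\partial D)}$ on mean-zero functions. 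For $\mu$ in a neighborhood of $1$ with $\mu\le 1$ this is a complete and attractive alternative: it avoids the pseudodifferential decomposition of $\Lambda_D$ entirely and makes transparent that the $H^{7/2}$ norm comes from the fourth-order symbol of $\mathscr{B}$ filtered through the order $-1$ operator $M^{-1}$. (Two details you flag are in fact unproblematic: $M$ is coercive on all of $H^{1/2}(\partial D)$, not only on mean-zero traces, since the exterior Dirichlet energy with zero data on $\partial\Omega$ controls the full trace norm; and $F$ is indeed mean zero.)

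The gap is the regime $\mu>1$, which the theorem genuinely requires (it asserts coercivity for all $\mu>\mu_0$, and the paper later uses $\mu=10$ and $\mu=100$). For $\mu>1$ your inequality $\|F\|_{H^{-1/2}}\ge\|\mathscr{B}(h)\|_{H^{-1/2}}-(1-\mu)\|\Lambda_D h\|_{H^{-1/2}}$ has the wrong sign: the reverse triangle inequality only gives $\|F\|_{H^{-1/2}}\ge\|\mathscr{B}(h)\|_{H^{-1/2}}-|1-\mu|\,\|\Lambda_D h\|_{H^{-1/2}}\ge \big(c-(\mu-1)C\big)\|h\|_{H^{7/2}(\partial D)}$, which is vacuous once $\mu\ge 1+c/C$. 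At that point your argument has nothing left, because you have thrown away $\langle h,\mathscr{B}(h)\rangle_{\partial D}$, and the favorable sign of $(\mu-1)\langle h,\Lambda_D h\rangle_{\partial D}$ only yields $H^{1/2}$-level information, never $H^{7/2}$. So the claim that $\mu\ge1$ ``only reinforces the estimate'' fails for the specific chain of inequalities you propose. The repair stays inside your framework but requires retaining the surface term: for $\mu\ge 1$ all three terms in the identity are nonnegative, $\langle h,\mathscr{B}(h)\rangle_{\partial D}\ge \mu_s\ell^2\|h\|^2_{H^2(\partial D)}$ for mean-zero $h$, and
\[
\|h\|^2_{H^{7/2}(\partial D)}
\lesssim \|\mathscr{B}(h)\|^2_{H^{-1/2}(\partial D)}
\le 2\|F\|^2_{H^{-1/2}(\partial D)}+2(\mu-1)^2\|\Lambda_D h\|^2_{H^{-1/2}(\partial D)}
\lesssim \langle M^{-1}F,F\rangle_{\partial D}+(\mu-1)^2\|h\|^2_{H^2(\partial D)},
\]
so both right-hand terms are controlled by $\langle h,Th\rangle_{\partial D}$ with a ($\mu$-dependent) constant; this is precisely the role the paper's estimate \eqref{eqn:wbd} plays in its proof. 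With that one additional step, your argument covers the full range $\mu>\mu_0$, with $\mu_0\in(0,1)$ fixed, as in your absorption step, by requiring $c_M\big(c-(1-\mu)C\big)^2>(1-\mu)C'$ on $(\mu_0,1]$.
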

\begin{proof}
From the boundary condition on $\partial D$ in \eqref{aux-prob}, we have that
\begin{align*}
    \langle h, T(h) \rangle_{\partial D}  & = \int_{\partial D} h\mathscr{B}(h) \, \text{d}s - \int_{\partial D} w \mathscr{B}(h) \, \text{d}s - \int_{\partial D} (1 - \mu ) h \Lambda_D h \, \text{d}s + \int_{\partial D} (1- \mu) h \Lambda_D w\, \text{d}s.
\end{align*}
From the first integral, we have that
\begin{align*}
\int_{\partial D} h\mathscr{B}(h) \, \text{d}s &= \int_{\partial D} \mu_s \ell_s^2 |\partial^2_{s} h|^2 + \mu_s |\partial_s h|^2 \, \text{d}s\\
&\geq C \| h\|^2_{H^{2}(\partial D)},
\end{align*}
where the last inequality holds for some positive constant $C>0$ obtained from that fact that $h$ is mean zero. From the boundary condition on $\partial D$, the second integral gives
\begin{align*}
    -\int_{\partial D} w \mathscr{B}(h) \, \text{d}s &= \int_{\partial D} -w [\![\partial_\nu w ]\!] - (1-\mu) w \Lambda_D h + (1-\mu) w \partial_{\nu}w^{-} \, \text{d}s\\
    &=  \int_{\partial D} -w (\partial_{\nu}w^{+}-\partial_{\nu}w^{-}) - (1-\mu) w \Lambda_D h \\
    &= \| \nabla w\|^{2}_{L^{2}(\Omega)} - \int_{\partial D} (1-\mu) w \Lambda_D h \, \text{d}s
\end{align*}
applying Green's second identity to obtain the last equality. We note that since $\Lambda_D$ is self-adjoint, we obtain that 
\[  \langle h, T(h) \rangle_{\partial D} = \langle h , {\color{black}(}\mathscr{B} -(1-\mu) \Lambda_D {\color{black})h}\rangle_{\partial D}  +  \| \nabla w\|^{2}_{L^{2}(\Omega)}.\]
We claim that there exists $\mu_0 \in (0,1)$ such that the operator $\mathscr{B} -(1-\mu) \Lambda_D$ is positive for all $\mu > \mu_0$. To show the existence for such value $\mu_0$, we proceed as follows. Since $\partial D$ is smooth, there exists an order 0 operator $B:L^{2}(\partial D) \rightarrow L^{2}(\partial D)$ such that the interior DtN map has the decomposition 
    \begin{equation}
    \label{eqn:DtNrep}
         \Lambda_D = \sqrt{-\partial^2_s} + B.
          \end{equation}
    This result is contained in for instance \cite{polterovich2015heat}, Section 4 or \cite{taylor1996partial}, Chapter 12, Proposition $C1$ for proofs or \cite{girouard2022dirichlet,taylor2020dirichlet} for a nice overview.  Since $B$ is a bounded, order $0$ operator and $\sqrt{-\partial_s^2}$ is an order $1$ operator that can be simultaneously diagonalized with $\mathscr{B}$, we can see that $(\mathscr{B} -(1-\mu) \Lambda_D )  > 0$ for all $\mu$ such that 
    \[
    \mu_s \lambda_1 + \mu_s \ell_s \lambda_1^2 -(1 - \mu) \sqrt{\lambda_1} - |1-\mu|\| B \|_{L^2(\partial D) \to L^2(\partial D)} > 0,
    \]
where $\lambda_1$ is the first non-zero eigenvalue of $\sqrt{-\partial^2_s}$ as introduced \eqref{eig-sys} and $\| \cdot \|_{L^2(\partial D) \to L^2(\partial D)}$ is the operator norm from $L^2(\partial D) \to L^2(\partial D)$.  To see this, consider the function
\[
u_\mu (z) = \mu_s z^2 + \mu_s \ell_s z^4 -(1 - \mu) z - |1-\mu|\| B \|_{L^2(\partial D) \to L^2(\partial D)} .
\]
If $u_\mu (z) > 0$ for all $z \geq \sqrt{\lambda_1}$, then $\langle (\mathscr{B} -(1-\mu) \Lambda_D ) \phi_n, \phi_n \rangle_{\partial D} > 0$ for each eigenfunction $\phi_n$, $n \geq 1$ and hence it is a positive operator using that
\[
\langle - (1-\mu) B \phi, \phi \rangle_{\partial D} > - |1-\mu| \| B \|_{L^2(\partial D) \to L^2(\partial D)} \| \| \phi \|_{L^2(\partial D)}^2
\]
for any $\phi \in L^2 (\partial D)$. That is, we use that $\mathscr{B} -(1-\mu) \Lambda_D > \mathscr{B} -(1-\mu) \sqrt{-\partial_s^2} - |1-\mu| \| B \|_{L^2(\partial D) \to L^2(\partial D)}$ and that all the operators on the right are simultaneously diagonalizable using the basis $\{ \phi_n \}$.
We have that
\begin{align*}
    u_\mu'(z) & = 2 \mu_s z + 4 \mu_s \ell_s z^3 -(1 - \mu), \\
    u_\mu''(z) & = 2 \mu_s + 12 \mu_s \ell_2 z^2 > 0.
\end{align*}
So, provided
\begin{equation}
    \label{eqn:posconds}
    u_\mu (\sqrt{\lambda_1})>0 \quad \text{and} \quad u_\mu'(\sqrt{\lambda_1}) >0 ,
\end{equation}
we have that $(\mathscr{B} -(1-\mu) \Lambda_D ) > 0$.  Since $2 \mu_s \sqrt{\lambda_1} + 4 \mu_s \ell_s \sqrt{\lambda_1}^3 > 0$ and $\mu_s \lambda_1 + \mu_s \ell_s \lambda_1^2>0$, and $u_\mu(\sqrt{\lambda_1})$ and $ u_\mu'(\sqrt{\lambda_1})$ are continuous functions in $\mu$, we can find $0<\mu_0<1$ such that \eqref{eqn:posconds} is satisfied for all $1 \geq \mu > \mu_0$. Hence, $(\mathscr{B} -(1-\mu) \Lambda_D ) > 0$ for all $\mu > \mu_0$. We note that positivity for the case where $\mu \geq 1$ follows trivially from the fact that $\Lambda_D$ is a non-negative definite operator. Note, $\mu_0$ as described here is sufficient as a lower bound on the values of $\mu$ but does not describe the sharpest possible value.  The value $\mu_0$ is dependent upon the value of $\lambda_1$ as well as the parameters $\mu_s, \ell_s$, which is to be expected.


We now finalize the coercivity argument. By the above calculations we have that there exist $C_1, C_2 > 0$ such that
\begin{align*}
    \langle h, T(h) \rangle_{\partial D}  &= \langle {\color{black}h, (\mathscr{B} -(1-\mu) \Lambda_D ) h} \rangle + \| \nabla w\|^{2}_{L^{2}(\Omega)} \\
    &\geq C_1 \big( \|h \|^{2}_{H^{2}(\partial D)} + \| w\|^{2}_{H^{1}_0(\Omega)} \big) \\
    &\geq C_2 \|h\|^{2}_{H^{7/2} (\partial D)}
\end{align*}
by Poincar\'e's inequality and \eqref{eqn:wbd} for $\mu$ such that $(\mathscr{B} -(1-\mu) \Lambda_D )  > 0$.  
\end{proof}

For the remainder of this work, we assume that 
\begin{align}
\mu > \mu_0
\end{align}
where $\mu_0$ is given in Theorem \ref{t-coercive}. Given that the operator $T$ is coercive on mean zero functions, we define $P:H^{7/2}(\partial D) \rightarrow H^{7/2}(\partial D)$ as 
\[P \phi = \phi - \fint_{\partial D} \phi \, \text{ds},\]
i.e. the orthogonal projection onto ${\rm span} \{1\}^{\perp}$. We may identify ${\rm span} \{1\}$ with
$$ P^{\perp} \phi = \fint_{\partial D} \phi \, \text{d}s, $$
i.e. $P^{\perp}$ is the orthogonal projection onto constants defined on $\partial D$. We note that $P$ and $P^{\perp}$ are self--adjoint. Thus, we have that $P^{*}TP = PTP$. Furthermore, recall that if $h \in {\rm span} \{1\}$, then $Th=0$. Therefore, $T = PTP$. This yields a more detailed factorization for the difference of the DtN maps:
\begin{equation}\label{sym-pro-fac}
    (\Lambda - \Lambda_0) = S^{*}PTPS.
\end{equation}
With this new factorization, we are able to prove the following properties. 
\begin{theorem}\label{DtN-com-inv-ran}
    The difference of the DtN mappings $(\Lambda - \Lambda_0 ):H^{1/2} (\Omega) \rightarrow H^{-1/2}(\partial \Omega)$ is compact and positive. Furthermore, the range of $(\Lambda - \Lambda_0)$ is dense in ${\rm span} \{1\}^{\perp}$ and ${\rm Null}(\Lambda - \Lambda_0) = {\rm span} \{1\}$.
\end{theorem}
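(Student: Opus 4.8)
The plan is to read off all four assertions from the symmetric factorization \eqref{sym-pro-fac}, $(\Lambda - \Lambda_0) = S^{*}PTPS$, together with the compactness and injectivity of $S$ (Lemma \ref{s-compact-injective}) and the coercivity of $T$ on mean-zero data (Theorem \ref{t-coercive}). Compactness is immediate: since $S$ is compact and $PTP$, $S^{*}$ are bounded, the composition $S^{*}(PTP)S$ is compact. For self-adjointness I would polarize the quadratic-form identity established inside the proof of Theorem \ref{t-coercive}, namely $\langle h, Th\rangle_{\partial D} = \langle (\mathscr{B} - (1-\mu)\Lambda_D)h, h\rangle_{\partial D} + \|\nabla w\|_{L^{2}(\Omega)}^{2}$; because $\mathscr{B}$ and $\Lambda_D$ are self-adjoint and the gradient term is a symmetric bilinear form in the associated solutions $w$, the operator $PTP$ is self-adjoint, whence $(\Lambda - \Lambda_0)^{*} = S^{*}(PTP)^{*}S = \Lambda - \Lambda_0$. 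Alternatively, self-adjointness follows directly from the symmetry of the bilinear form $A(\cdot,\cdot)$ underlying \eqref{bvp} and \eqref{lifting}.

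For positivity, fix $f \in H^{1/2}(\partial \Omega)$ and use the adjoint identity \eqref{eqn:adjSrel} together with the self-adjointness of $P$ with respect to the pairing $\langle \cdot, \cdot\rangle_{\partial D}$ to write
\[
\langle (\Lambda - \Lambda_0)f, f\rangle_{\partial \Omega} = \langle Sf, PTPSf\rangle_{\partial D} = \langle PSf, T(PSf)\rangle_{\partial D}.
\]
Setting $h := PSf$, which is mean-zero by construction of $P$, Theorem \ref{t-coercive} gives $\langle h, Th\rangle_{\partial D} \geq C\|h\|_{H^{7/2}(\partial D)}^{2} \geq 0$, so $\Lambda - \Lambda_0$ is positive.

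The null space is handled by a double inclusion. A constant $f \equiv c$ on $\partial \Omega$ lifts to $u_0 \equiv c$ in \eqref{lifting} (so $\Lambda_0 f = 0$), and since constants annihilate $\mathscr{B}$ and have vanishing normal derivative, $u \equiv c$ solves \eqref{bvp} (so $\Lambda f = 0$); thus ${\rm span}\{1\} \subseteq {\rm Null}(\Lambda - \Lambda_0)$. Conversely, if $(\Lambda - \Lambda_0)f = 0$, the positivity computation forces $\langle PSf, T(PSf)\rangle_{\partial D} = 0$, and coercivity gives $PSf = 0$, i.e. $Sf = u|_{\partial D}$ is constant on $\partial D$. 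Solving the interior Dirichlet problem shows $u$ is constant in $D$, so $\partial_{\nu}u^{-} = 0$; combined with $\mathscr{B}(u) = 0$, the interface condition in \eqref{bvp} yields $\partial_{\nu}u^{+} = 0$ on $\partial D$. With $u$ constant and zero Neumann trace on $\partial D$ from the side of $\Omega \setminus D$, Holmgren's theorem forces $u$ to be constant on $\Omega \setminus D$, hence $f$ is constant. This reprises the unique-continuation arguments of Lemmas \ref{s-compact-injective} and \ref{adj-s-lem}.

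Finally, having established that $\Lambda - \Lambda_0$ is compact, self-adjoint, and has ${\rm Null}(\Lambda - \Lambda_0) = {\rm span}\{1\}$, the density of the range follows from the standard fact that for a self-adjoint operator the closure of the range equals the orthogonal complement (annihilator) of the null space, so $\overline{{\rm Range}(\Lambda - \Lambda_0)} = {\rm span}\{1\}^{\perp}$. The main obstacle I anticipate is making self-adjointness and the range--null duality rigorous across the Gelfand triple $H^{1/2}(\partial \Omega) \subset L^{2}(\partial \Omega) \subset H^{-1/2}(\partial \Omega)$, since $\Lambda - \Lambda_0$ maps between genuinely different spaces: one must interpret every pairing through the $L^{2}(\partial \Omega)$ pivot and confirm that the topological annihilator of ${\rm span}\{1\}$ is what is meant by ${\rm span}\{1\}^{\perp}$. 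A secondary bookkeeping point is tracking the projection $P$ so that the coercivity of $T$, which holds only on ${\rm span}\{1\}^{\perp}$, is applied to the legitimately mean-zero element $h = PSf$.
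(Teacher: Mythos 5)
Your proposal is correct, and for compactness, positivity, and the inclusion ${\rm Null}(\Lambda-\Lambda_0)\subseteq{\rm span}\{1\}$ it coincides with the paper's argument: same factorization \eqref{sym-pro-fac}, same use of Theorem \ref{t-coercive} on $h=PSf$, and the same unique-continuation chain (constant trace on $\partial D$ $\Rightarrow$ constant in $D$ $\Rightarrow$ $\partial_\nu u^-=0$ and $\mathscr{B}(u)=0$ $\Rightarrow$ $\partial_\nu u^+=0$ $\Rightarrow$ Holmgren). Where you genuinely diverge is the density of the range. The paper avoids self-adjointness altogether: it observes that the coercivity inequality \eqref{DtN-t-coercive} applies verbatim to any $f$ that annihilates ${\rm Range}(\Lambda-\Lambda_0)$ (take $g=f$ in $\langle f,(\Lambda-\Lambda_0)g\rangle_{\partial\Omega}=0$), so annihilators and null vectors are dispatched \emph{simultaneously} by one computation, and density follows by bipolar duality without ever invoking symmetry. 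You instead prove symmetry of $\Lambda-\Lambda_0$ and then quote the range--kernel duality $\overline{{\rm Range}}=({\rm Null})^{\perp}$ for symmetric operators across the Gelfand triple; this is a valid alternative (and you correctly flag the reflexivity/annihilator bookkeeping it requires), and it buys a softer, more standard functional-analytic conclusion at the cost of one extra ingredient the paper never needs. Two small caveats: your ``polarization'' justification of symmetry is loose as stated --- knowledge of the quadratic form $\langle h,Th\rangle_{\partial D}$ alone does not control the antisymmetric part of $T$, so you must either redo the Green's-identity computation bilinearly in $(h_1,h_2)$ or use your second route via the symmetry of $A(\cdot,\cdot)$, which is rigorous and expresses $\langle f,(\Lambda-\Lambda_0)g\rangle_{\partial\Omega}$ as a manifestly symmetric form in the solutions $u_f,u_g$. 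On the plus side, you make explicit the easy inclusion ${\rm span}\{1\}\subseteq{\rm Null}(\Lambda-\Lambda_0)$ (constants solve both \eqref{bvp} and \eqref{lifting} with vanishing Neumann data), which the paper's proof asserts only implicitly when it concludes equality of the two spaces.
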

\begin{proof}
    {\color{black}Compactness follows from Lemma \ref{adj-s-lem}, since $S^{*}$ is compact} and $T$ is bounded. Note that from the factorization given in \eqref{sym-pro-fac} we have that
     \begin{equation}\label{DtN-t-coercive}
     \langle f , S^* PTPSf \rangle_{\partial \Omega} = \langle PSf , TPSf \rangle_{\partial D} \geq C \| PSf \|^2_{H^{7/2}(\partial D)},
     \end{equation}
    where we have used the coercivity of $T$ on mean zero functions as shown in Theorem \ref{t-coercive}. Thus, $(\Lambda - \Lambda_0)$ is positive. We now simultaneously show that the range of $(\Lambda - \Lambda_0)$ is dense in ${\rm span} \{1\}^{\perp}$ and ${\rm Null} (\Lambda - \Lambda_0)$ is ${\rm span}\{1\}$. To this end, suppose $f \in H^{1/2}(\partial \Omega)$ is an annihilator for ${\rm Range}(\Lambda - \Lambda_0)$ or $f\in {\rm Null} (\Lambda - \Lambda_0)$. In either case, \eqref{DtN-t-coercive} implies that
        $$0 \geq C \| PSf \|^2_{H^{7/2}(\partial D)}.$$
    Thus, $PSf = Pu\rvert_{\partial D} = 0$. Then there exists $\alpha\in {\rm span} \{1\}$ such that $u \rvert_{\partial D} = \alpha$. Note that $u$ satisfies 
    $$-\mu \Delta u = 0 \enspace \text{in} \enspace D \quad \text{with} \quad u = \alpha \enspace \text{on} \enspace \partial D. $$
    Thus, $u = \alpha$ in $D$. This implies that $\partial_{\nu}u^{-}\rvert_{\partial D} = 0$. Also, since $u \rvert_{\partial D}= \alpha$, then we have that $\mathscr{B}(u) = 0$ on $\partial D$. By the boundary condition of \eqref{bvp}, $\partial_{\nu}u^{+}\rvert_{\partial D} = 0$. Note that $u$ must satisfy
     $$-\Delta u = 0 \enspace \text{in} \enspace \Omega \setminus D \quad \text{with} \quad u = \alpha \enspace \text{on} \enspace \partial D  \quad \text{and} \quad \partial_{\nu}u^{+} = 0 \enspace \text{on} \enspace \partial D.$$ Therefore, $u-\alpha$ satisfies 
     $$-\Delta (u - \alpha) = 0 \enspace \text{in} \enspace \Omega \setminus D \quad \text{with} \quad u - \alpha =0\enspace \text{on} \enspace \partial D  \quad \text{and} \quad \partial_{\nu}(u-\alpha)^{+} = 0 \enspace \text{on} \enspace \partial D.$$ 
     By Holmgren's Theorem, $u = \alpha$ in $\Omega \setminus D$. That is, $u = \alpha$ in $\Omega$. Therefore, $f \in {\rm span}  \{1\}$. Thus, ${\rm Range}(\Lambda - \Lambda_0)$ is dense in ${\rm span} \{1\}^{\perp}$ and ${\rm Null}(\Lambda - \Lambda_0) = {\rm span} \{1\}$.
\end{proof}

The factorizations \eqref{eqn:factorization1} and \eqref{sym-pro-fac} provide the baseline to study the inverse parameter problem in Section \ref{inverse-parameter-problem} as well as the inverse shape problem in Section \ref{inverse-shape-problem}.

\section{\bf Uniqueness of the Inverse Parameter Problem} \label{inverse-parameter-problem}

In this section, we study the uniqueness of the inverse parameter problem of determining the ratio of the shear modulus $\mu$ and the boundary parameters $\mu_s$ and $\ell_s$ provided that the boundary $\partial D$ is given. Recovery of coefficients have been studied for problems in electrostatics \cite{chaabane} and inverse scattering \cite{cakoni-lee-monk}. We will establish the uniqueness of the aforementioned parameters from knowledge of the DtN operator $\Lambda :H^{1/2}(\partial \Omega) \rightarrow H^{-1/2}(\partial \Omega)$. To this end, we first consider the following density result.

\begin{lemma}\label{s-dense-range}
    The set 
    \[ \mathcal{U} = \big \{ u \big \rvert_{\partial D} \in H^{7/2}(\partial D) \mid u \in \mathcal{H} \enspace \text{solves} \enspace \eqref{bvp} \enspace \text{for any} \enspace f \in H^{1/2}(\partial \Omega)\big\}\]
    is dense in $H^{7/2}(\partial D)$.
\end{lemma}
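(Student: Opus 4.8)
The plan is to recognize that $\mathcal{U}$ is precisely the range of the solution operator $S : H^{1/2}(\partial \Omega) \to H^{7/2}(\partial D)$ from \eqref{s-operator}, and then to establish density through the standard duality characterization: a subspace of a Hilbert space is dense if and only if its annihilator in the dual is trivial. Since the dual of $H^{7/2}(\partial D)$ is identified with $H^{-7/2}(\partial D)$ via the pivot pairing $\langle \cdot , \cdot \rangle_{\partial D}$, it suffices to show that any $g \in H^{-7/2}(\partial D)$ annihilating ${\rm Range}(S)$ must vanish.

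So first I would suppose $g \in H^{-7/2}(\partial D)$ satisfies $\langle Sf , g \rangle_{\partial D} = 0$ for every $f \in H^{1/2}(\partial \Omega)$. Invoking the adjoint identity \eqref{eqn:adjSrel}, namely $\langle Sf , g \rangle_{\partial D} = \langle f , S^{*} g \rangle_{\partial \Omega}$, this hypothesis is equivalent to $\langle f , S^{*} g \rangle_{\partial \Omega} = 0$ for all $f \in H^{1/2}(\partial \Omega)$. Because the pairing between $H^{1/2}(\partial \Omega)$ and its dual $H^{-1/2}(\partial \Omega)$ is nondegenerate and $S^{*} g \in H^{-1/2}(\partial \Omega)$, this forces $S^{*} g = 0$.

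I would then appeal directly to Lemma \ref{adj-s-lem}, which establishes that $S^{*}$ is injective, to conclude $g = 0$. Hence the annihilator of ${\rm Range}(S)$ in $H^{-7/2}(\partial D)$ is $\{0\}$, and therefore $\mathcal{U} = {\rm Range}(S)$ is dense in $H^{7/2}(\partial D)$.

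I do not anticipate a genuine obstacle, since all of the heavy lifting has already been carried out: the well-definedness of $S^{*}$ on $H^{-7/2}(\partial D)$ (Theorem \ref{adj-s-def}, together with Lemmas \ref{lem:H-2existence} and \ref{adj-s-bvp}) and, crucially, its injectivity (Lemma \ref{adj-s-lem}). The only point requiring mild care is the bookkeeping of the duality pairings: one must use the pivot identification of $H^{7/2}(\partial D)$ with $H^{-7/2}(\partial D)$ so that the annihilator is computed in the correct dual, and confirm that $S^{*}$ is the adjoint with respect to exactly the pairings $\langle \cdot , \cdot \rangle_{\partial D}$ and $\langle \cdot , \cdot \rangle_{\partial \Omega}$ appearing in \eqref{eqn:adjSrel}. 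Once these identifications are in place, density is an immediate consequence of the injectivity of the adjoint.
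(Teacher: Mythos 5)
Your proposal is correct and follows exactly the paper's argument: identify $\mathcal{U} = \mathrm{Range}(S)$ and deduce density from the injectivity of $S^{*}$ established in Lemma \ref{adj-s-lem}. The paper states this in two sentences as an ``immediate consequence''; your write-up merely makes explicit the standard annihilator/duality mechanics via \eqref{eqn:adjSrel} that the paper leaves implicit.
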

The proof of the above Lemma is an immediate consequence of Lemma \ref{adj-s-lem}, where we proved that the operator $S^{*}$ is injective. By definition, $\mathcal{U} = {\rm Range}(S)$. Hence, $\mathcal{U}$ is dense in $H^{7/2}(\partial D)$.

\begin{theorem}
\label{thm:mechprecise}
    If $\partial D$ is smooth and the parameters $\mu$, $\mu_s$, and $\ell_s$ are positive, then the mapping $(\mu , \mu_s , \ell_s) \longmapsto \Lambda$ is injective.
\end{theorem}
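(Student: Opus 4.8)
The plan is to recover the three parameters one at a time by exploiting the known eigensystem $\{\lambda_n, \phi_n\}$ of $-\partial_s^2$ on $\partial D$, which is determined by the (given) geometry of $\partial D$ and hence known a priori. The key link to the data is the boundary relation from \eqref{bvp}: for a solution $u$ with $u|_{\partial D} = \sum_{n\geq 0} u_n \phi_n$, we have $\mathscr{B}(u) = \sum_{n\geq 1} (\mu_s \lambda_n^2 + \mu_s \ell^2 \lambda_n^4) u_n \phi_n = [\![\partial_\nu u]\!]$. Thus for each eigenmode $n\geq 1$, the ratio between the $n$-th Fourier coefficient of the jump $[\![\partial_\nu u]\!]$ and the coefficient $u_n$ equals the \emph{symbol}
\begin{equation}
    \sigma_n := \mu_s \lambda_n^2 + \mu_s \ell^2 \lambda_n^4 = \mu_s \lambda_n^2 (1 + \ell^2 \lambda_n^2). \label{eq:symbolplan}
\end{equation}
Since $\Lambda$ is assumed known and $\Lambda_0$ is computable from the geometry alone, the data $\Lambda$ determines, through the factorization $(\Lambda-\Lambda_0) = S^*TS$ and the boundary traces it encodes, both $u|_{\partial D}$ and $[\![\partial_\nu u]\!]$ for a rich family of solutions $u$. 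The first step is therefore to argue that $\Lambda$ determines the collection of symbols $\{\sigma_n\}_{n\geq 1}$.

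First I would make precise how $\Lambda$ gives access to the symbols. Using the density of $\mathcal{U} = {\rm Range}(S)$ in $H^{7/2}(\partial D)$ (Lemma \ref{s-dense-range}), the traces $u|_{\partial D}$ of solutions span a dense set, so we can excite each eigenmode $\phi_n$ to arbitrary accuracy. For a solution $u$, the interior and exterior DtN maps $\Lambda_D$ and $\Lambda_{\Omega\setminus D}$ are fixed by the geometry and by $\mu$; the relation $[\![\partial_\nu u]\!] = \Lambda_{\Omega\setminus D}(u|_{\partial D}) - \mu \Lambda_D(u|_{\partial D})$ together with the boundary condition $[\![\partial_\nu u]\!] = \mathscr{B}(u)$ means that knowledge of $\Lambda$ (equivalently the Cauchy data on $\partial\Omega$, propagated inward) pins down the action of $\mathscr{B}$, hence the symbols $\sigma_n$, on the dense set $\mathcal{U}$. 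The cleanest route is to show that two parameter triples $(\mu,\mu_s,\ell)$ and $(\tilde\mu,\tilde\mu_s,\tilde\ell)$ producing the same $\Lambda$ must produce the same $\sigma_n$ for every $n\geq 1$, because the corresponding solution operators $S$ and boundary jumps agree on a dense subspace.

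Once the symbols $\{\sigma_n\}$ are known, recovering the three scalars is elementary algebra in the three largest-eigenvalue modes. From \eqref{eq:symbolplan}, $\sigma_n/\lambda_n^2 = \mu_s + \mu_s \ell^2 \lambda_n^2$ is an affine function of $\lambda_n^2$; evaluating at two distinct eigenvalues $\lambda_m \neq \lambda_n$ and solving the resulting $2\times 2$ linear system yields $\mu_s$ (the intercept) and $\mu_s \ell^2$ (the slope), hence $\ell^2$ and, since $\ell > 0$, the value $\ell$. To recover $\mu$ I would use that $\mu$ enters through the transmission condition $[\![\partial_\nu u]\!] = \partial_\nu u^+ - \mu\partial_\nu u^-$: the known geometry determines the map $u|_{\partial D} \mapsto (\partial_\nu u^+, \partial_\nu u^-)$ up to the single scalar $\mu$ via $\Lambda_{\Omega\setminus D}$ and $\Lambda_D$, and matching this against the now-known operator $\mathscr{B}$ forces a unique value of $\mu$ (monotonicity of $\mu \mapsto \Lambda_{\Omega\setminus D} - \mu\Lambda_D$ in the relevant quadratic form gives injectivity). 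Strict positivity of all parameters is what guarantees the $2\times 2$ system above is nondegenerate and that the square roots are uniquely determined.

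The main obstacle I expect is the first step: rigorously transferring knowledge of $\Lambda$ on $\partial\Omega$ into knowledge of the symbols $\sigma_n$ on $\partial D$. This requires carefully disentangling the contributions of $\mu$ (which sits inside $\Lambda_D$ and the transmission condition) from those of $\mu_s,\ell$ (which sit inside $\mathscr{B}$), and it leans essentially on the injectivity of $S^*$ and the density of $\mathcal{U}$ to ensure that the boundary data on $\partial D$ is not "hidden" from the exterior measurements. A subtlety is that the null mode $\phi_0$ carries no information (the symbol vanishes there and ${\rm Null}(\Lambda-\Lambda_0) = {\rm span}\{1\}$), so all recovery must proceed through the modes $n\geq 1$; fortunately these are exactly the modes on which $T$ is coercive (Theorem \ref{t-coercive}), so the data operator is invertible there and the traces are genuinely recoverable.
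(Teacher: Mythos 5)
Your overall skeleton (identify the solutions via Holmgren's theorem, use density of the trace set $\mathcal{U}$, then argue mode-by-mode in the eigenbasis \eqref{eig-sys}) is the right one and parallels the paper. But there is a genuine gap at exactly the step you flag as the ``main obstacle,'' and it does not go away: knowledge of $\Lambda$ does \emph{not} pin down the action of $\mathscr{B}$, hence does not give you the symbols $\sigma_n = \mu_s\lambda_n^2 + \mu_s\ell^2\lambda_n^4$. What the Cauchy data propagated inward actually determines is $u$ in $\Omega\setminus D$ (Holmgren), hence $u|_{\partial D}$ and $\partial_\nu u^+$, and then $\partial_\nu u^- = \Lambda_D\bigl(u|_{\partial D}\bigr)$ since $u$ is harmonic in $D$. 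The boundary condition in \eqref{bvp} then reads $\partial_\nu u^+ = \mu\,\Lambda_D\bigl(u|_{\partial D}\bigr) + \mathscr{B}\bigl(u|_{\partial D}\bigr)$, so the data determines only the \emph{combined} operator $\mu\Lambda_D + \mathscr{B}$ on the dense set of traces; the unknown $\mu$ contaminates every mode. Consequently your Step 2 algebra fails as stated: the quantity you can actually extract in mode $n$ is $\mu\lambda_n + \mu\,(\phi_n, B\phi_n)_{L^2(\partial D)} + \mu_s\lambda_n^2 + \mu_s\ell^2\lambda_n^4$ (using the decomposition \eqref{eqn:DtNrep}), not $\sigma_n$; since $B$ is not diagonal in the basis $\{\phi_n\}$ and $\mu$ is unknown, $\sigma_n/\lambda_n^2$ is not exactly affine in $\lambda_n^2$, and a $2\times 2$ linear system at two finite eigenvalues does not determine $\mu_s$ and $\mu_s\ell^2$.

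The missing ingredient is an order-separation (asymptotic) argument, which is precisely how the paper closes this gap. If two triples give the same $\Lambda$, the solutions coincide, and subtracting the two boundary conditions yields the operator identity $\mathscr{L}\xi = (\mu^{(1)}-\mu^{(2)})\Lambda_D\xi + \partial_s^2\bigl(\mu_s^{(1)}\ell^{2,(1)}-\mu_s^{(2)}\ell^{2,(2)}\bigr)\partial_s^2\xi - \partial_s\bigl(\mu_s^{(1)}-\mu_s^{(2)}\bigr)\partial_s\xi = 0$ on $\mathcal{U}$, upgraded to all of $H^2(\partial D)$ by density (Lemma \ref{s-dense-range}). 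Testing against $\phi_n$, dividing successively by $\lambda_n^4$, $\lambda_n^2$, and $\lambda_n$, and letting $n\to\infty$ decouples the parameters \emph{in the limit}, because $\Lambda_D$ has order $1$ while the two surface terms have orders $4$ and $2$: first $\mu_s^{(1)}\ell^{2,(1)} = \mu_s^{(2)}\ell^{2,(2)}$, then $\mu_s^{(1)} = \mu_s^{(2)}$ (hence $\ell^{(1)} = \ell^{(2)}$), then $\mu^{(1)} = \mu^{(2)}$. Two smaller points: your appeal to Theorem \ref{t-coercive} is unnecessary and would impose the spurious restriction $\mu>\mu_0$ (parameter uniqueness needs only injectivity of $S^*$), and your separate ``monotonicity'' argument for $\mu$ becomes superfluous once the order separation is done, since $\mu$ falls out of the same limiting identity.
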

\begin{proof}
    Given $f \in H^{1/2}(\partial \Omega)$, let $u_i$ be the solution to \eqref{bvp} with parameters $(\mu^i , \mu^i_s, \ell_s^i )$ and $\Lambda_i$ be the corresponding DtN operator for each $i = 1,2$. Assume that the DtN operators $\Lambda_1$ and $\Lambda_2$ coincide. Then, $\partial_{\nu} u_1 = \partial_{\nu} u_2$ and $u_1 = u_2$ on $\partial \Omega$, which implies that $u_1 = u_2$ in $\Omega \setminus D$ from Holmgren’s Theorem. By the trace theorem, we also have that $u_1 = u_2$ on $\partial D$. It follows that $u_1 - u_2$ satisfies the Dirichlet problem in $D$ with zero Dirichlet data. Thus, we conclude that $u_1 = u_2$ is $D$. Consequently, $u_1 = u_2$ in all of $\Omega$. Inspired by the boundary condition on $\partial D$, we proceed by defining the fourth order operator $\mathscr{L}:H^{2}(\partial D) \rightarrow H^{-2}(\partial D)$ as
    \[\mathscr{L} \xi= (\mu^{(1)} - \mu^{(2)})\Lambda_D \xi+  \partial^{2}_{s}(\mu_s^{(1)} \ell_s^{2,(1)} - \mu_s^{(2)} \ell_s^{2,(2)}) \partial^{2}_s\xi - \partial_s (\mu_s^{(1)} - \mu_s^{(2)} )\partial_s \xi. \]
    Using that $u_1 = u_2$ in $\Omega$, we obtain that  
    \[0 = \mathscr{L}u_1.\]
    For any $\phi \in H^{2}(\partial D)$, consider the function $\psi \in H^{-2}(\partial D)$ given by
    \[\psi := \mathscr{L} \phi. \]
    Then, 
    \[0 = \int_{\partial D} \phi \mathscr{L}u_1 \, \text{d}s = \int_{\partial D} u_1 \mathscr{L}\phi \, \text{d}s = \int_{\partial D} u_1 \psi \, \text{d}s.\]
    Therefore, $\psi \in \mathcal{U}^{\perp}$ and by Lemma \ref{s-dense-range}, we have that $\psi = 0$. That is, 
    \[ \mathscr{L} \phi = 0 \quad \text{for all} \enspace \phi \in H^{2}(\partial D).\]
    Let us recall the formula in \eqref{eqn:DtNrep},  $\Lambda_D = \sqrt{-\partial^2_s} + B$ for $B$ a bounded operator of order $0$.  
    We had previously established in \eqref{eig-sys} that we have the following eigensystem on $\partial D$
    \[\{\lambda_n , \phi_n\}_{n\in \mathbb{N} \cup \{0\}} \quad \text{such that} \quad -\partial^{2}_{s}\phi_n = \lambda^2_n \phi_n\enspace \text{for all} \enspace n \in \mathbb{N} \cup \{0\},\]
where $\lambda_n >0$ for all $n \in \bbN$, $\lambda_n \rightarrow \infty$ as $n \rightarrow \infty$, and $\phi_n$ is an orthonormal basis of $L^{2}(\partial D)$. Thus, we have that 
    \[\mathscr{L}= (\mu^{(1)} - \mu^{(2)}) \big (\sqrt{-\partial^2_s} + B \big ) + \partial^{2}_s \big(\mu_s^{(1)} \ell_s^{2,(1)} - \mu_s^{(2)} \ell_s^{2(2)} \big)\partial^{2}_s -\partial_s \big( \mu_s^{(1)} - \mu_s^{(2)} \big) \partial_s. \]
    By our orthogonality result from above, the eigensystem on $\partial D$, and the fact that all of these coefficients are positive, it holds that
    \begin{align*}
        0 &= \langle \phi_n , \mathscr{L} \phi_n \rangle_{\partial D} \\
        &= (\mu^{(1)} - \mu^{(2)}) \lambda_n + (\phi_n , \big(\mu^{(1)} - \mu^{(2)}) B \phi_n \big)_{L^{2}(\partial D)} + (\mu_s^{(1)} \ell_s^{2,(1)} - \mu_s^{(2)} \ell_s^{2(2)}) \lambda^4_n + (\mu_s^{(1)} - \mu_s^{(2)}) \lambda^2_n.
    \end{align*}
    Since $\lambda_n > 0$ for all $n \in \mathbb{N}$, we have that
    \begin{align*}
        0 &= \frac{\mu^{(1)} - \mu^{(2)}}{\lambda^3_n} + \frac{(\phi_n , \big(\mu^{(1)} - \mu^{(2)}) B \phi_n \big)_{L^{2}(\partial D)}}{\lambda^4_n} + \mu_s^{(1)} \ell_s^{2,(1)} - \mu_s^{(2)} \ell_s^{2(2)} + \frac{\mu_s^{(1)} - \mu_s^{(2)}}{\lambda^2_n}\\
        &\rightarrow \mu_s^{(1)} \ell_s^{2,(1)} - \mu_s^{(2)} \ell_s^{2(2)} \quad \text{as} \enspace n \rightarrow \infty
    \end{align*}
   using the fact that $B$ is an order zero operator. Thus, $\mu_s^{(1)} \ell_s^{2,(1)} - \mu_s^{(2)} \ell_s^{2(2)} =0$, i.e. $\mu_s^{(1)} \ell_s^{2,(1)} = \mu_s^{(2)} \ell_s^{2(2)}$. This yields that
    \[ 0 = (\mu^{(1)} - \mu^{(2)}) \lambda_n + (\phi_n , \big(\mu^{(1)} - \mu^{(2)}) B \phi_n \big)_{L^{2}(\partial D)} + (\mu_s^{(1)} \ell_s^{2,(1)} + (\mu_s^{(1)} - \mu_s^{(2)}) \lambda^2_n.\]
    By repeating this limiting argument two more times, we determine that $\mu_s^{(1)} = \mu_s^{(2)}$, which implies that $\ell_s^{2,(1)} = \ell_s^{2,(2)}$, and also that $\mu^{(1)} = \mu^{(2)}$, which proves the result.
\end{proof}

\section{\bf The Inverse Shape Problem} \label{inverse-shape-problem}

{\color{black}To recover the inclusion $D$ from the data operator $(\Lambda-\Lambda_0)$, we employ a qualitative reconstruction method based on the linear sampling and factorization methods (see, e.g., \cite{cakoni-colton-2003,cakoni-colton-haddar,colton-haddar-monk,colton-haddar-piana,harris1,kirsch2005factorization,kirsch2007factorization}), which have been widely used in inverse scattering and tomography. Our approach adapts the regularized factorization method to the present elasticity setting, following the general strategy of Theorem 2.3 in \cite{harris1}.

{\color{black}
A fundamental obstacle is that the data operator $(\Lambda-\Lambda_0)$ possesses a nontrivial, one-dimensional null space. Indeed, Theorem~\ref{DtN-com-inv-ran} shows that
$
{\rm Null}(\Lambda-\Lambda_0)= {\rm span}\{1\},
$
so the initial factorization
\[
(\Lambda-\Lambda_0)=S^{*}TS
\]
from Theorem~\ref{initial-factorization} is insufficient for recovering the inclusion. To overcome this difficulty, we derive the refined factorization
\[
(\Lambda-\Lambda_0)=(PS)^{*}TPS,
\]
given in \eqref{sym-pro-fac}, where $P$ denotes the orthogonal projection onto ${\rm span}\{1\}^{\perp}$. By removing the constant mode responsible for the null space, this factorization reduces the inverse problem to characterizing the range of $(PS)^{*}$. This leads to a two-point sampling criterion for recovering $D$. Once a single reference point inside the inclusion has been identified, the reconstruction reduces to the classical one-point sampling procedure.


To connect this range characterization to measured data on the known exterior boundary $\p \Om$, we use the fact that $(\Lambda - \Lambda_0)$ is non-negative and thus admits a non-negative square root. That is, there exists bounded operator $Q:H^{1/2}(\partial \Omega) \rightarrow L^{2}(\partial \Omega)$ such that $(\Lambda - \Lambda_0) = Q^{*}Q$. Given this new factorization for the difference of the DtN mappings, we have the following result which describes an important connection between the factorizations $(\Lambda - \Lambda_0) = Q^{*}Q$ and $(\Lambda - \Lambda_0) = (PS)^{*}TPS$.
} 

\begin{theorem}\label{range-adjs-q-s}
    Let $(\Lambda - \Lambda_0):H^{1/2}(\partial \Omega) \rightarrow H^{-1/2}(\partial \Omega)$ have factorizations $(\Lambda - \Lambda_0) = Q^{*}Q$ and $(\Lambda - \Lambda_0) = (PS)^{*}TPS$ such that 
    \[
    S:H^{1/2}(\partial \Omega) \rightarrow H^{7/2}(\partial D), \enspace T:H^{7/2}(\partial D) \rightarrow H^{-7/2}(\partial D), \quad \text{and} \quad Q:H^{1/2}(\partial D) \rightarrow L^{2}(\partial D)
    \]
    are bounded operators and $P$ is the orthogonal projection onto ${\rm span} \{1\}^{\perp}$. If $T$ is coercive on ${\rm Range}(PS)$, then 
    $${\rm Range}(Q^{*}) = {\rm Range}\big(S^{*}\rvert_{{\rm span} \{ 1\}^{\perp}} \big).$$
\end{theorem}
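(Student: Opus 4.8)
The plan is to deduce the claim from the abstract range-identity principle underlying the factorization method (as in Theorem 2.3 of \cite{harris1}): whenever a positive, compact, self-adjoint operator admits a factorization $Q^{*}Q$ together with a second \emph{symmetric} factorization whose middle operator is coercive, the ranges of the two outer adjoint factors coincide. Concretely, I would first reduce the statement to the identity ${\rm Range}(Q^{*}) = {\rm Range}\big((PS)^{*}\big)$ and then read off the assertion, since $(PS)^{*} = S^{*}P^{*}$ and $P^{*}$ is the (dual) orthogonal projection of $H^{-7/2}(\partial D)$ onto ${\rm span}\{1\}^{\perp}$; hence ${\rm Range}\big((PS)^{*}\big) = S^{*}\big({\rm span}\{1\}^{\perp}\big) = {\rm Range}\big(S^{*}|_{{\rm span}\{1\}^{\perp}}\big)$.

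The engine of the argument is the elementary polar-decomposition lemma: for any bounded operator $C$ between Hilbert spaces, ${\rm Range}(C^{*}) = {\rm Range}\big((C^{*}C)^{1/2}\big)$. Applying this with $C = Q$ and using $(\Lambda - \Lambda_0) = Q^{*}Q$ immediately gives ${\rm Range}(Q^{*}) = {\rm Range}\big((\Lambda - \Lambda_0)^{1/2}\big)$, so it remains to show ${\rm Range}\big((PS)^{*}\big) = {\rm Range}\big((\Lambda - \Lambda_0)^{1/2}\big)$ as well.

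For the second factorization I would exploit coercivity (Theorem \ref{t-coercive}) to extract a square root of the middle operator. Because $T = PTP$ is self-adjoint and, via the Gelfand triple $H^{7/2}(\partial D) \subset L^{2}(\partial D) \subset H^{-7/2}(\partial D)$, coercive on the closed mean-zero subspace $V := \overline{{\rm span}\{1\}^{\perp}}$, the operator $T$ composed with the Riesz isomorphism restricts to a self-adjoint, boundedly invertible, positive operator on $V$, and therefore factors as $T = \mathcal{T}^{1/2}\mathcal{T}^{1/2}$ with $\mathcal{T}^{1/2}$ a self-adjoint isomorphism of $V$. Setting $C := \mathcal{T}^{1/2}PS$ we obtain $(\Lambda - \Lambda_0) = (PS)^{*}T(PS) = C^{*}C$, and applying the polar-decomposition lemma again yields ${\rm Range}\big((\Lambda - \Lambda_0)^{1/2}\big) = {\rm Range}(C^{*}) = {\rm Range}\big((PS)^{*}\mathcal{T}^{1/2}\big)$. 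Since $\mathcal{T}^{1/2}$ is a bijection of $V$, this last range is exactly ${\rm Range}\big((PS)^{*}\big)$, which closes the chain of equalities.

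The main obstacle, I expect, is the bookkeeping around the Gelfand-triple structure rather than any deep analytic difficulty: $T$ maps $H^{7/2}(\partial D)$ into its dual rather than into itself, and the coercivity of Theorem \ref{t-coercive} holds only on mean-zero functions. I would therefore take care to (i) realize $T$ as a genuine self-adjoint positive operator on the single Hilbert space $V$ using the pivot identification, so that a bounded, boundedly invertible square root $\mathcal{T}^{1/2}$ actually exists; (ii) verify that ${\rm Range}(PS) \subset V$, so that only the coercive part of $T$ is ever seen; and (iii) follow the projections through the adjoints so that the final range is $S^{*}$ restricted to ${\rm span}\{1\}^{\perp}$ and not all of $S^{*}$, which is precisely where the nontrivial null space ${\rm Null}(\Lambda - \Lambda_0) = {\rm span}\{1\}$ of the data operator is absorbed.
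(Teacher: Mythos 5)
Your proposal reaches the right conclusion but by a genuinely different route than the paper. The paper's proof is essentially four lines: from the two factorizations and the coercivity of $T$ on ${\rm Range}(PS)$ it reads off the two-sided norm equivalence
\[
C \|PSf \|^2_{H^{7/2}(\partial D)}\leq \| Qf\|^{2}_{L^{2}(\partial \Omega)} = \langle PSf , TPSf \rangle_{\partial D} \leq \| T\|_{\text{op}} \|PSf \|^2_{H^{7/2}(\partial D)},
\]
and then cites Embry's theorem (Theorem 1 of \cite{embry}), which converts exactly such an equivalence into ${\rm Range}(Q^{*}) = {\rm Range}((PS)^{*})$; the final identification with ${\rm Range}\big(S^{*}\rvert_{{\rm span}\{1\}^{\perp}}\big)$ via $(PS)^{*} = S^{*}P^{*}$ is the same observation you make. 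You instead run the classical Kirsch-style range identity: build a boundedly invertible self-adjoint square root $\mathcal{T}^{1/2}$ of the middle operator and apply the polar-decomposition lemma ${\rm Range}(C^{*}) = {\rm Range}\big((C^{*}C)^{1/2}\big)$ twice. What the paper's argument buys is economy and weaker hypotheses: $T$ need only be bounded and coercive on ${\rm Range}(PS)$, with no symmetry assumption and no Gelfand-triple manipulations. What yours buys is self-containedness — no appeal to an external operator-theoretic theorem — in the form most familiar from the factorization-method literature.

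The one genuine gap is the clause ``because $T = PTP$ is self-adjoint.'' Symmetry of $T$ is not among the hypotheses of the theorem, is nowhere proved in the paper, and is never needed by the paper's proof; your proof, however, cannot run without it, since a coercive but non-symmetric operator need not admit a self-adjoint square root, and self-adjointness of $\mathcal{T}^{1/2}$ is precisely what makes $C^{*}C$ reproduce $(PS)^{*}T(PS)$. Two ways to close this. (a) Prove symmetry of the concrete operator \eqref{T}: writing $F := \mathscr{B} - (1-\mu)\Lambda_D$, one checks $Th = F(h) - F\big(w_h\rvert_{\partial D}\big)$ with $w_h$ the solution of \eqref{aux-prob}, and Green's identity on $\Omega \setminus D$ and $D$ shows the map from the jump data $\partial_{\nu}w^{+} - \partial_{\nu}w^{-}$ to the trace $w\rvert_{\partial D}$ is symmetric; since $\mathscr{B}$ and $\Lambda_D$ are self-adjoint, so is $T$. (b) More cheaply, symmetrize: the hypothesis $\Lambda - \Lambda_0 = Q^{*}Q$ already forces $\langle f, (\Lambda-\Lambda_0)g\rangle_{\partial \Omega} = (Qf, Qg)_{L^{2}(\partial \Omega)}$ to be symmetric, so replacing $T$ by $\tfrac12(T + T')$, with $T'$ the dual-pairing transpose, preserves both the factorization and the coercivity (the quadratic forms coincide), and your construction then applies verbatim. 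Separately, in your last step note that $(PS)^{*}\circ\mathcal{T}^{1/2}$, read literally, only sweeps out $(PS)^{*}(V)$ with $V$ embedded in $H^{-7/2}(\partial D)$ through the pivot space, which is a priori smaller than $(PS)^{*}\big(H^{-7/2}(\partial D)\big)$; the clean bookkeeping keeps the Riesz isomorphism $R: V^{*} \to V$ explicit, yielding ${\rm Range}\big(C^{*}R^{-1}\big) = (PS)^{*}(V^{*}) = {\rm Range}\big((PS)^{*}\big)$, the last equality because restriction $H^{-7/2}(\partial D) \to V^{*}$ is onto. You flag this as your item (iii), so it is a matter of execution rather than strategy; the self-adjointness of $T$ is the step that actually needs an argument.
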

\begin{proof}
    Since $(\Lambda - \Lambda_0) = Q^{*}Q$ and $(\Lambda - \Lambda_0) = (PS)^{*}TPS$, then we have that
    $$\| Qf\|^{2}_{L^{2}(\partial \Omega)} = \langle f , (\Lambda - \Lambda_0)f \rangle_{\partial \Omega} = \langle PSf , TPSf \rangle_{\partial D} \quad \text{for all} \enspace f \in H^{1/2}(\partial \Omega).$$
    Since $T$ is coercive on ${\rm Range}(PS)$ from Theorem \ref{t-coercive}, there exists constant $C >0$ such that
    $$C \|PSf \|^2_{H^{7/2}(\partial D)}\leq \| Qf\|^{2}_{L^{2}(\partial \Omega)} \leq \| T\|_{\text{op}} \|PSf \|^2_{H^{7/2}(\partial D)},  $$
    where here $\| \cdot \|_{\text{op}}$ denotes the operator norm from $H^{7/2}(\partial D) \rightarrow H^{-7/2}(\partial D)$. Therefore, by Theorem 1 in \cite{embry}, ${\rm Range}(Q^{*}) = {\rm Range}((PS)^{*})$, which proves the claim.
\end{proof}

{\color{black} 
Sampling methods typically characterize the unknown region through singular solutions of an associated background problem. In this setting, the relevant singular solutions are generated by the Dirichlet Green's function corresponding to \eqref{lifting}, i.e., the problem where the bulk $\Omega$ has homogeneous unit shear modulus, defined by
\begin{equation}\label{greens}
    -\Delta \mathbb{G}(\cdot , z) = \delta(\cdot - z) \enspace \text{in} \enspace \Omega \quad \text{and} \quad \mathbb{G}(\cdot , z)\big \rvert_{\Omega} = 0.
\end{equation}

The following result connects the differences of boundary traces of Dirichlet Green's functions on $\p \Om$ to the inclusion $D$. This characterization is fundamental for the sampling method which connects the region of interest to an ill-posed equation involving the data operator.
}

{\color{black}
\begin{theorem}\label{thm:difference_G}
Let $z_1 \neq z_2 \in \Om$. Then we have that
\[\partial_{\nu} \mathbb{G}(\cdot , z_1) \big \rvert_{\partial \Omega} - \partial_{\nu} \mathbb{G}(\cdot , z_2) \big \rvert_{\partial \Omega}\in {\rm Range} \big( S^{*} \big \rvert_{{\rm span} \{1\}^{\perp}} \big)\quad \text{if and only if} \quad z_1 , z_2 \in D. \]
\end{theorem}
\begin{proof}
To prove the forward direction, assume at least one of $z_1, z_2 \in \Om \setminus D$ (i.e. at least one of $z_1 , z_2 \notin D$). Suppose by contradiction that there exists $g_{z_1 , z_2} \in \text{span}\{1\}^{\perp} \subset H^{-7/2}(\p D)$ such that $S^{*}g_{z_1 , z_2} = \partial_{\nu} \mathbb{G}(\cdot , z_1) \rvert_{\partial \Omega} - \partial_{\nu} \mathbb{G}(\cdot , z_2) \rvert_{\partial \Omega}$. This implies that there exists $v_{z_1, z_2} \in H^{1}_{0}(\Om)$ such that
 $$   \begin{cases} 
      - \Delta v_{z_1,z_2} = 0 & \text{in} \enspace \Omega \setminus D, \\
      -\mu \Delta v_{z_1,z_2} = 0 & \text{in} \enspace D, \\
      [\![\partial_\nu v_{z_1,z_2}]\!] = \mathscr{B}(v_{z_1,z_2}) + g_{z_1,z_2} & \text{on} \enspace \partial D
    \end{cases}
$$
Furthermore, $\p_{\nu} v_{z_1,z_2} \rvert_{\partial \Omega} = \partial_{\nu} \mathbb{G}(\cdot , z_1)  \rvert_{\partial \Omega} - \partial_{\nu} \mathbb{G}(\cdot , z_2) \rvert_{\partial \Omega}$. This implies that $W_{z_1 , z_2}:= v_{z_1,z_2} - \big(\mathbb{G}(\cdot , z_1) \rvert_{\partial \Omega} - \mathbb{G}(\cdot , z_2) \rvert_{\partial \Omega} \big) $ satisfies
$$    \begin{cases} 
      - \Delta W_{z_1,z_2} = 0 & \text{in} \enspace \Omega \setminus (D\cup\{z_1,z_2\}) ,\\
      W_{z_1,z_2} = 0 & \text{on} \enspace \partial \Omega ,\\
      \partial_{\nu}W_{z_1,z_2} =  0 & \text{on} \enspace \partial \Omega.
    \end{cases}
$$
By Holmgren's Theorem, $W_{z_1,z_2} = 0$ in $\Omega \setminus (D\cup\{z_1 , z_2\})$, i.e. $v_{z_1,z_2} = \mathbb{G}(\cdot , z_1) - \mathbb{G}(\cdot , z_2)$ in $\Omega \setminus (D\cup\{z_1 , z_2\})$. By interior elliptic regularity, $v_{z_1,z_2}$ is continuous in $\Omega$. However, $\mathbb{G}(\cdot , z_1) - \mathbb{G}(\cdot , z_2)$ has a singularity at $z_1$ or $z_2$. This proves the forward direction since $|v(z_j)| < \infty$ for $j = 1,2$ whereas
$$ |\mathbb{G}(x,z_1) - \mathbb{G}(x,z_2)| \rightarrow \infty \quad \text{as} \quad x \rightarrow z_1 \quad \text{(inclusive) or} \quad x \rightarrow z_2 .$$

Conversely, suppose that both $z_1,z_2 \in D$. Let $\xi \in H^{1}(D)$ be the solution to the following Dirichlet problem
$$-\mu \Delta \xi = 0 \quad \text{in} \enspace D \qquad \text{with} \qquad \xi \big \rvert^{-}_{\partial D} = \big[ \mathbb{G}(\cdot , z_1) - \mathbb{G}(\cdot , z_2)\big ]^{+}_{\partial D}. $$
We define $v_{z_1,z_2}$ as 
$$  v_{z_1,z_2} =    \begin{cases} 
      \mathbb{G}(\cdot , z_1) - \mathbb{G}(\cdot , z_2)& \text{in} \enspace \Omega \setminus D, \\
      \xi & \text{in} \enspace D
    \end{cases}
$$
and we will show that $v_{z_1 , z_2}$ solves \eqref{adj-s} with mean-zero data on $\p D$. By construction, $v_{z_1,z_2} \in H^{1}_0 (\Omega)$ such that it is harmonic in $\Omega \setminus D$ since $z_1,z_2 \in D$. Furthermore, $-\mu \Delta v_{z_1,z_2} = 0$ in $D$. We also have that $\partial_{\nu} v_{z_1,z_2} \rvert_{\partial \Omega} = \partial_{\nu} \mathbb{G}(\cdot , z_1) \rvert_{\partial \Omega} - \partial_{\nu} \mathbb{G}(\cdot , z_2) \rvert_{\partial \Omega}$. It remains to show that 
$$ g_{z_1,z_2} := [\![\partial_\nu v_{z_1,z_2}]\!] - \mathscr{B}(v_{z_1,z_2}) \in {\rm span}\{1\}^{\perp} \subset H^{-7/2}(\partial D).$$
We first show that $g_{z_1,z_2} \in {\rm span}\{1\}^{\perp}$. To prove this, we note that for any $z\in \Om$, 
\[\fint_{\p \Om} \p_{\nu}\mathbb{G}(\cdot , z) \, {\rm d}s = -\frac{1}{|\p \Om|} \]
by applying Green's first identity. We now consider
\begin{align*}
\int_{\p D} g_{z_1,z_2} \, {\rm d}s &= \int_{\p D} [\![\partial_\nu v_{z_1,z_2}]\!] - \mathscr{B}(v_{z_1,z_2})  \, {\rm d}s\\ 
&= \int_{\Om \setminus D} \Delta v_{z_1,z_2} \, {\rm d}x + \int_{\p D} \p_{\nu} v_{z_1, z_2}^{+} \, {\rm d}s \\
&= \int_{\p \Om} \p_{\nu}v_{z_1,z_2} \, {\rm d}s \\
&= 0
\end{align*}
where we once again used Green's first identity and the definition of $v_{z_1,z_2}$. Hence, $g_{z_1,z_2} \in {\rm span}\{1\}^{\perp}$. To show that $g_{z_1,z_2} \in H^{-7/2}(\p D)$, we have that by the Neumann trace theorem 
\[ [\![\partial_\nu v_{z_1,z_2}]\!] = \partial_{\nu}\mathbb{G}(\cdot , z_1) \big \rvert^{+}_{\partial D} - \partial_{\nu}\mathbb{G}(\cdot , z_2) \big \rvert^{+}_{\partial D} - \partial_{\nu} \xi \big \rvert^{-}_{\partial D} \in H^{-1/2}(\partial D) \]
since $\mathbb{G}(\cdot , z_j) \in H^{1}(\Omega \setminus D)$ for $j=1,2$ and $\xi \in H^{1}(D)$. Since $\mathscr{B}$ is a fourth-order operator on $\partial D$ and $v_{z_1,z_2}  \rvert_{\partial D} \in H^{1/2} (\partial D)$, then $\mathscr{B}(v_{z_1,z_2}) \in H^{-7/2}(\partial D)$. Hence, $v_{z_1,z_2}$ satisfies \eqref{adj-s} with $g_{z_1,z_2} \in {\rm span}\{1\}^{\perp} \subset H^{-7/2}(\p D)$. By Theorem \ref{adj-s-def}, it follows that $S^{*}g_{z_1,z_2} = \partial_{\nu} \mathbb{G}(\cdot , z_1) \rvert_{\partial \Omega} - \partial_{\nu} \mathbb{G}(\cdot , z_2) \rvert_{\partial \Omega}$, which proves the claim.
\end{proof}
}

{\color{black}
Recall from Theorem \ref{range-adjs-q-s} that ${\rm Range}(Q^{*}) = {\rm Range}((PS)^{*})$. Together with Theorem \ref{thm:difference_G}, this identifies precisely which differences of Green's function traces belong to the range of $(\Lambda - \Lambda_0)^{1/2} = Q^{*}$. Hence, fixing one point $z_0 \in D$ and varying a second sampling point $z \in \Om$, yields the following two-point reconstruction criterion.

\begin{theorem}\label{main-thm}
    Let $z_0 \in D$.  The difference of the DtN mappings $(\Lambda - \Lambda_0):H^{1/2}(\partial \Omega) \rightarrow H^{-1/2}(\partial \Omega)$ uniquely determines $D$ in that for any $z \in \Omega$
    $$z \in D \quad \text{if and only if} \quad \liminf_{\alpha \rightarrow 0} \langle f^{z}_{\alpha} , (\Lambda - \Lambda_0) f^{z}_{\alpha} \rangle_{\partial \Omega} < \infty$$
    for $f^{z}_{\alpha}$ the regularized solution to $(\Lambda - \Lambda_0) f^{z}_{\alpha} = \partial_{\nu} \mathbb{G}(\cdot , z) \big \rvert_{\partial \Omega} - \partial_{\nu} \mathbb{G}(\cdot , z_0) \big \rvert_{\partial \Omega}$. 
\end{theorem}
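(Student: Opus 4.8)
The plan is to read this theorem as the capstone of the factorization analysis: the analytic substance is already contained in the range identities proved above, and the only genuinely new ingredient is a standard Picard-type argument linking membership of the (mean-subtracted) Green's-function data in ${\rm Range}(Q^*)$ to boundedness of the regularized energy $\langle f^z_\alpha,(\Lambda-\Lambda_0)f^z_\alpha\rangle_{\partial\Omega}$. Write $\Phi_z := \partial_\nu\mathbb{G}(\cdot,z)|_{\partial\Omega} - \fint_{\partial\Omega}\partial_\nu\mathbb{G}(\cdot,z)\,ds$ for the prescribed right-hand side, which is mean-zero on $\partial\Omega$ by construction; the goal is to show that $z\in D$ holds exactly when $\liminf_{\alpha\to 0}\langle f^z_\alpha,(\Lambda-\Lambda_0)f^z_\alpha\rangle_{\partial\Omega}<\infty$.

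First I would collapse the geometric condition ``$z\in D$'' into an operator-range condition by concatenating three results. The theorem immediately preceding Theorem~\ref{mean-zero-range} gives $\partial_\nu\mathbb{G}(\cdot,z)|_{\partial\Omega}\in{\rm Range}(S^*)$ if and only if $z\in D$; Theorem~\ref{mean-zero-range} transfers this to $\Phi_z\in{\rm Range}\bigl(S^*|_{{\rm span}\{1\}^\perp}\bigr)$ if and only if $z\in D$; and Theorem~\ref{range-adjs-q-s}, whose hypothesis that $T$ is coercive on ${\rm Range}(PS)$ is exactly Theorem~\ref{t-coercive} under the standing assumption $\mu>\mu_0$, identifies ${\rm Range}\bigl(S^*|_{{\rm span}\{1\}^\perp}\bigr) = {\rm Range}(Q^*)$, where $(\Lambda-\Lambda_0)=Q^*Q$. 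Chaining these, $z\in D$ if and only if $\Phi_z\in{\rm Range}(Q^*)$.

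It then remains to characterize ${\rm Range}(Q^*)$ through the regularized energy. By Theorem~\ref{DtN-com-inv-ran} the operator $\Lambda-\Lambda_0$ is compact, positive and self-adjoint, so I would use its eigensystem $\{\lambda_n,\varphi_n\}_{n\geq 0}$ from Appendix~\ref{app:a}, with $\lambda_0=0$ (kernel ${\rm span}\{1\}$) and $\lambda_n>0$ for $n\geq 1$. Since ${\rm Range}(Q^*)={\rm Range}\bigl((\Lambda-\Lambda_0)^{1/2}\bigr)$, the Picard criterion gives that $\Phi_z\in{\rm Range}(Q^*)$ if and only if $\sum_{n\geq 1}\lambda_n^{-1}\bigl|\langle\Phi_z,\varphi_n\rangle_{\partial\Omega}\bigr|^2<\infty$. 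Using $(\Lambda-\Lambda_0)=Q^*Q$ and expanding $f^z_\alpha$ in the eigenbasis,
\[
\langle f^z_\alpha, (\Lambda - \Lambda_0) f^z_\alpha \rangle_{\partial \Omega} = \| Q f^z_\alpha \|_{L^2(\partial \Omega)}^2 = \sum_{n \geq 1} w_n(\alpha)\, \bigl| \langle \Phi_z, \varphi_n \rangle_{\partial \Omega} \bigr|^2 ,
\]
with filter weights $w_n(\alpha)$ --- for instance $w_n(\alpha)=\lambda_n^3/(\alpha+\lambda_n^2)^2$ for the standard Tikhonov choice $f^z_\alpha=(\alpha I+(\Lambda-\Lambda_0)^2)^{-1}(\Lambda-\Lambda_0)\Phi_z$ --- that increase monotonically to $\lambda_n^{-1}$ as $\alpha\downarrow 0$; the $n=0$ term is absent precisely because $\Phi_z$ is mean-zero. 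By monotone convergence the $\liminf$ (indeed the limit) equals the Picard sum, so it is finite if and only if $\Phi_z\in{\rm Range}(Q^*)$, if and only if $z\in D$, which completes the proof.

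The honest difficulty has already been discharged in assembling the factorization \eqref{sym-pro-fac} and the coercivity of Theorem~\ref{t-coercive}; what is delicate in this last step is the one-dimensional kernel. One must verify that the mean-zero normalization built into $\Phi_z$ (and into Theorem~\ref{mean-zero-range}) exactly annihilates the zero eigenvalue, so that the divergent $1/\lambda_0$ contribution never appears and the regularized problem is genuinely posed on ${\rm span}\{1\}^\perp$; and one must reconcile the spectral expansion, which is cleanest for $\Lambda-\Lambda_0$ viewed on $L^2(\partial\Omega)$, with the mapping $H^{1/2}(\partial\Omega)\to H^{-1/2}(\partial\Omega)$ and the duality pairing $\langle\cdot,\cdot\rangle_{\partial\Omega}$, as set up in Appendix~\ref{app:a}. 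Once these are in place, the monotone-convergence argument closes the theorem.
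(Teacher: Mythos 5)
Your proposal is correct and takes essentially the same route as the paper: the paper also proves Theorem \ref{main-thm} by chaining the Green's-function range test, Theorem \ref{mean-zero-range}, and Theorem \ref{range-adjs-q-s} to reduce ``$z\in D$'' to membership of the mean-subtracted data in ${\rm Range}((PS)^{*})={\rm Range}(Q^{*})$, and then invokes the Appendix \ref{app:a} results (Theorem \ref{thm:ps-range1} and the theorem following it) to convert that membership first into finiteness of the Picard series and then into the $\liminf$ of the regularized energy. The only difference is cosmetic: where you fix a monotone filter (Tikhonov or spectral cut-off) and apply monotone convergence, the paper sandwiches the regularized energy between two constant multiples of the Picard sum for an arbitrary uniformly bounded filter family with $h_{\alpha}\to 1$ pointwise, which is marginally more general but rests on the same computation.
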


Thus, the reconstruction begins by identifying a pair of sampling points contained in $D$. By arbitrarily fixing one of these two points, we recover the rest of $D$ by single-point sampling via Theorem \ref{main-thm}. That is, once a reference point in $D$ has been identified, the reconstruction proceeds exactly as a standard (single-point) sampling method. We summarize this procedure after choosing grid points $\{z_k\} \subset \Omega$:}

\begin{algorithm}
	\caption{Reconstruction of $D$}
    \label{alg:pseudocode}

	\begin{algorithmic}[1]
	
	\State \textbf{Step 1 (identification of reference point):}
	Find a pair $(z_{k_0}, z_{k_1})$ with $k_0 \neq k_1$ such that the criterion of Theorem \ref{main-thm} is satisfied.  
	Then $z_{k_0}, z_{k_1} \in D$.
	
	\State \textbf{Step 2 (one-point sampling):}
	Fix $z_{k_0} \in D$. For each $k \neq k_0,k_1$, identify as belonging to $D$ all points $z_k$ such that $(z_{k_0},z_k)$ satisfies the criterion of Theorem \ref{main-thm}.
	\end{algorithmic} 
\end{algorithm}

This concludes the shape reconstruction problem for an extended region. In the following section, we provide some numerical experiments for reconstructing $D$.

\section{Numerical Experiments}
\label{numerics}
{\color{black}In this section, we present numerical examples for the regularized factorization method developed in Section \ref{inverse-shape-problem} for solving the inverse shape problem. As summarized in Algorithm \ref{alg:pseudocode}, the reconstruction procedure consists of two stages. The first stage identifies a pair of sampling points contained in the inclusion where we arbitrarily fix one of them, and refer to it as the anchor point $z_0 \in D$. The second stage reduces to a standard sampling procedure, where the anchor point $z_0 \in D$ is fixed and a second sampling point is varied throughout $\Omega$. Since the reconstruction of the inclusion is primarily determined by the second step, our numerical experiments focus on this stage. We begin with a preliminary experiment illustrating the significance of the location of the anchor point. For simplicity, all subsequent experiments fix $z_0=(0,0) \in D$, allowing us to investigate the effect of the regularization parameter, random noise, and material parameters $\mu, \mu_s$, and $\ell_s$ independently of the choice of anchor point.}

Our numerical experiments are done in \texttt{MATLAB} 2020a. For simplicity, we consider the problem where $\Omega$ is the unit disk in $\mathbb{R}^2$ and the inner domain $D$ is a smaller disk also centered at the origin. The trace spaces $H^{\pm 1/2} (\partial \Omega)$  in this case can be identified with $H_{\text{per}}^{\pm 1/2} [0 , 2 \pi ]$. To apply Theorem \ref{main-thm}, we need the normal derivative of the Dirichlet Green's function $\mathbb{G}(\cdot , z)$ on the unit disk. It is well known that for this case, it is given by the Poisson kernel which we express in polar coordinates
$$\partial_{\nu} \mathbb{G}\big ( \cdot \, , z \big ) \big|_{\partial \Omega} = \frac{1}{2 \pi} \bigg[ \frac{1 - |z|^2 }{|z|^2 + 1 - 2 |z| \text{cos}(\cdot \,- \theta_{z})}   \bigg ],$$ 
where $\theta_{z}$ is the polar angle of the sampling point $z \in \Omega$.

We let the matrix $\textbf{A} \in {\color{black}\mathbb{R}}^{N \times N}$ represent the discretized operator $(\Lambda - \Lambda_0)$ {\color{black}and given an anchor point $z_0 \in D$, we build the vector 
\begin{equation}\label{discretized-difference}
\textbf{b}_z = \big [ \partial_{\nu} \mathbb{G}\big (  \theta_j ,z \big ) -  \partial_{\nu} \mathbb{G}\big (  \theta_j ,z_0 \big )\big].
\end{equation}
}
In our numerical experiments, we add random noise to the discretized operator \textbf{A} such that 
\begin{equation} \label{delta-error}
\text{\textbf{A}}^{\delta} = \big [ \text{\textbf{A}}_{i,j} \big( 1 + \delta \text{\textbf{E}}_{i,j} \big) \big ]_{i,j=1}^{N} \quad \text{where} \quad \|{\text{\textbf{E}}}\|_{2} = 1.
\end{equation}
Here, the matrix \textbf{E} is taken to have random entries uniformly distributed between $[-1,1]$ and $\delta$ is the relative noise level added to the data in the sense that $\|{\text{\textbf{A}}^{\delta} - \text{\textbf{A}}}\|_{2} \leq \delta \|{\text{\textbf{A}}}\|_{2}$. 
It is important to note that the Poisson kernel on the unit disk is harmonic, which implies that it has the mean value property. Thus, for many of the sampling points $z \in \Omega$, we have that $\| \textbf{b}_z\| \ll 1$.  To avoid numerical instabilities, we normalize $\textbf{b}_z$ and define
\[\textbf{b}^{\rm nor}_z = \frac{\textbf{b}_z}{\big \| \textbf{b}_z \big\|}_{2}, \]
such that the quotient is understood to be component-wise division.
We use the `\texttt{norm}' command in \texttt{MATLAB} to compute the discretization of $ \|\partial_{\nu} \mathbb{G}(\cdot , z) \|_{L^{2}(\partial \Omega)}$.
Thus, to compute the indicator associated with Theorem \ref{main-thm}, we solve
\begin{equation} \label{discretized-main-thm}
    \text{\textbf{A}}^{\delta} \textbf{f}_z = \textbf{b}^{\rm nor}_z.
\end{equation}
By Theorem \ref{DtN-com-inv-ran}, the data operator $(\Lambda - \Lambda_0)$ is compact, which implies that \textbf{A} is ill-conditioned. Hence, one needs to employ a regularization technique to find an approximate solution to the discretized equation. In our experiments, we use the Spectral cut-off as the regularization scheme and follow a similar procedure demonstrated in \cite{harris0} where $\textbf{f}^{\alpha}_z$ represents the regularized solution to \eqref{discretized-main-thm} and $\alpha >0$ denotes the regularization parameter. To define the imagining functional, we follow \cite{harris1} to have the following
\[ ( \textbf{f}^{\alpha}_z , \text{\textbf{A}}^{\delta} \textbf{f}_z) = \sum_{j=1}^N \frac{\phi^2 (\sigma_j ; \alpha)}{\sigma_j} \big \rvert (\textbf{u}_j , \textbf{b}^{\rm nor}_z) \big\rvert^2. \]
Here, $\sigma_j$ and ${\bf u}_j$ {\color{black}denote} the singular values and left singular vectors of the matrix ${\textbf{A}}^{\delta}$, respectively. Also, $\phi (\sigma_j ; \alpha)$ corresponds to the filter function defined by the regularization scheme used to solve \eqref{discretized-main-thm}. In our examples, we use the filter function defined by
\begin{equation}
\phi (t , \alpha) =
     \begin{cases}
    
      1, & t^2 \geq \alpha ,\\
      0, & t^2 < \alpha,
    \end{cases}
\end{equation}
which corresponds to the aforementioned Spectral cut-off regularization scheme. With the above expressions, we can recover the unknown region $D$ by defining the imaging functional
 \begin{equation}\label{w-asis}
 W(z) = \big( \textbf{f}^{\alpha}_z , \textbf{A}^{\delta} \textbf{f}^{\alpha}_z \big)^{-1}.
 \end{equation}
Theorem \ref{main-thm} implies that {\color{black}$W(z) > 0$} provided that $z \in D$ as well as $W(z) \approx 0$  provided that $ z \notin D$. {\color{black} The upcoming numerical examples are based on this function to reconstruct the inclusion $D$.} 

We remark that a clear separation of scales justifies the smallness of the interfacial parameters used in the following numerical experiments. Recent work \cite{researchplayground25} shows that, when a surface energy is derived via a dimensional reduction for a thin, homogeneous, three-dimensional isotropic strain-gradient elastic layer, the coefficient $\mu_s$ scales with the layer thickness and the bulk shear modulus. Consequently, $\mu_s$ is expected to be small relative to the bulk shear modulus $\mu$. Likewise, the intrinsic length scale $\ell_s$ encodes the unresolved microstructure of the material (i.e., the inhomogeneous features that are averaged out in the homogeneous continuum description) and is therefore tied to the characteristic spacing between such inhomogeneities \cite{Askes2011}. Depending on the modeling regime, this may correspond to molecular spacing or, in a granular description, to the nearest-neighbor distance between the barycenters of adjacent grains in a heterogeneous solid \cite{RodriguezPlacidiMisraLaValle2026}.\\

\noindent\textbf{Numerical reconstruction of a circular region:}\\
In polar coordinates, we assume $\partial D$ is given by $\rho (\text{cos} ( \theta ), \text{sin} (\theta))$ for some constant $\rho \in (0,1)$. Using separation of variables, we determine that for all $\theta \in [0 , 2\pi)$
\begin{equation}\label{kernel-function}
(\Lambda - \Lambda_0) f(\theta) = \frac{1}{2 \pi} \int_0^{2\pi} K(\theta , \phi) f(\phi) \, \text{d}\phi \quad \text{where} \quad K(\theta , \phi) = \sum_{|n|=1}^{\infty}\kappa_ne^{\text{i}n(\theta - \phi)}.
\end{equation}
See Appendix \ref{appendix-kernel} for details on the calculation of the coefficients $\kappa_n$ for all $n \in \mathbb{Z}$.
In our examples, we approximate the kernel function $K(\theta , \phi)$ given above by truncating the series for $|n| = 0,\dots,100$. We then discretize the truncated integral operator by a 128 equally spaced grid on $[0,2\pi)$ using a collocation method. The dotted red lines represent the boundary of interest $\partial D$. \\

{\color{black} 
\noindent \textbf{Location of anchor point:} In our first experiment, we let $\rho=0.6$ for the reconstructions in Figure \ref{fig:anchorpoints}. Presented are contours plots of the imaging functional $W(z)$ (defined in \eqref{fig:anchorpoints}), where we vary the location of the anchor point $z_0 \in D$. We let $\delta = 0$ which corresponds to $0 \%$ relative random noise added to the data, and the Spectral cut-off regularization parameter $\alpha = 10^{-12}$. We also fix the coefficients $\mu_s = .01, \ell_s^2 = .001$, and $\mu = .2$. }

\begin{figure}[h!]
\centering 
\includegraphics[scale=0.15]{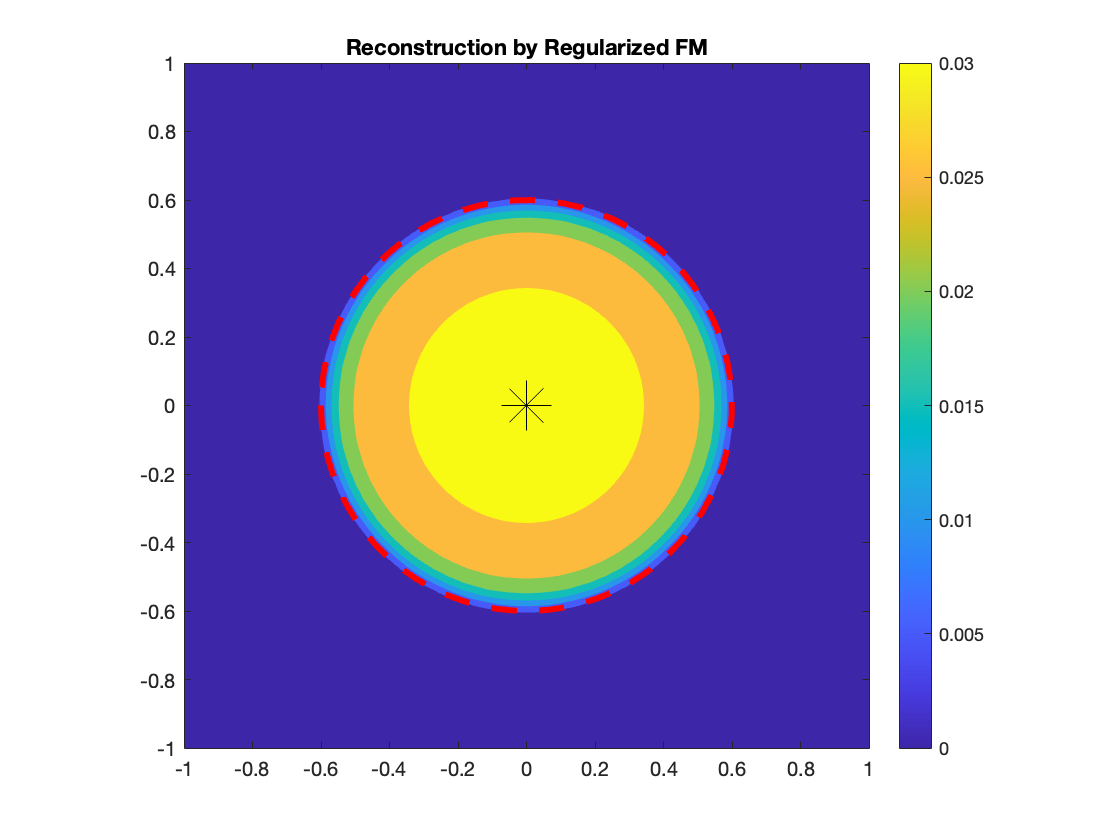}\hspace{-.25in} 
\includegraphics[scale=0.15]{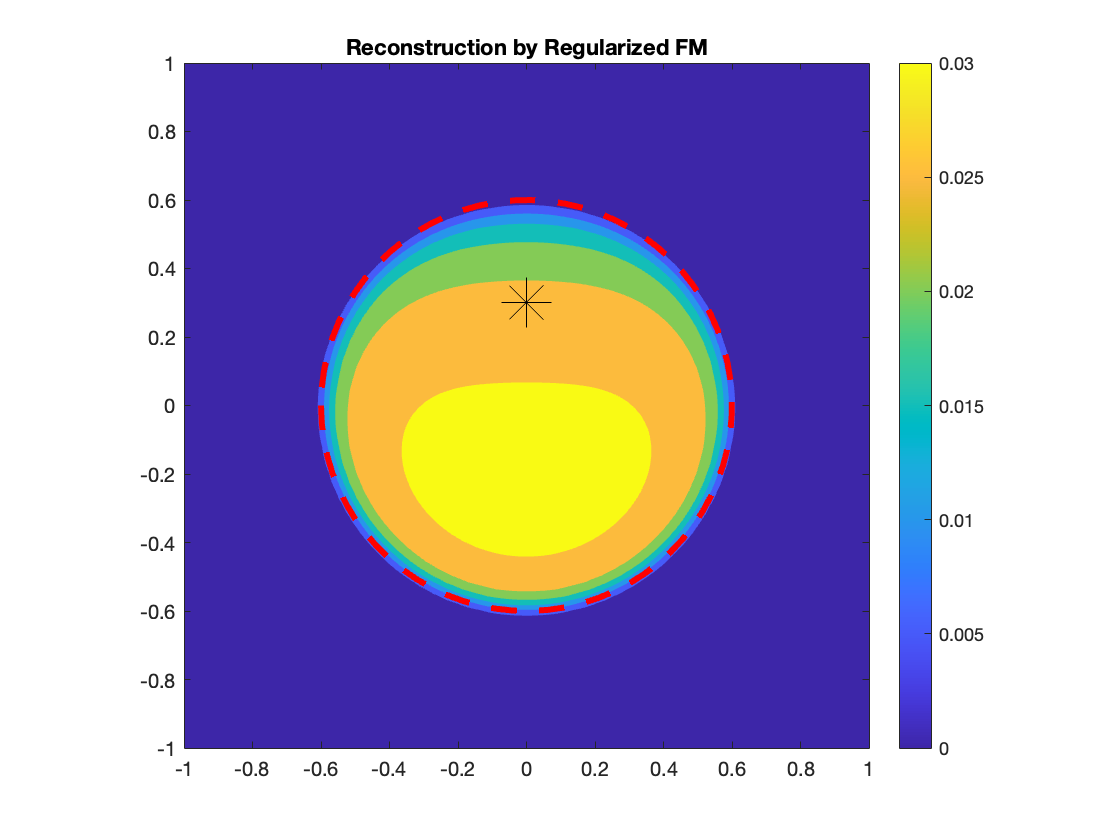} 
\includegraphics[scale=0.15]{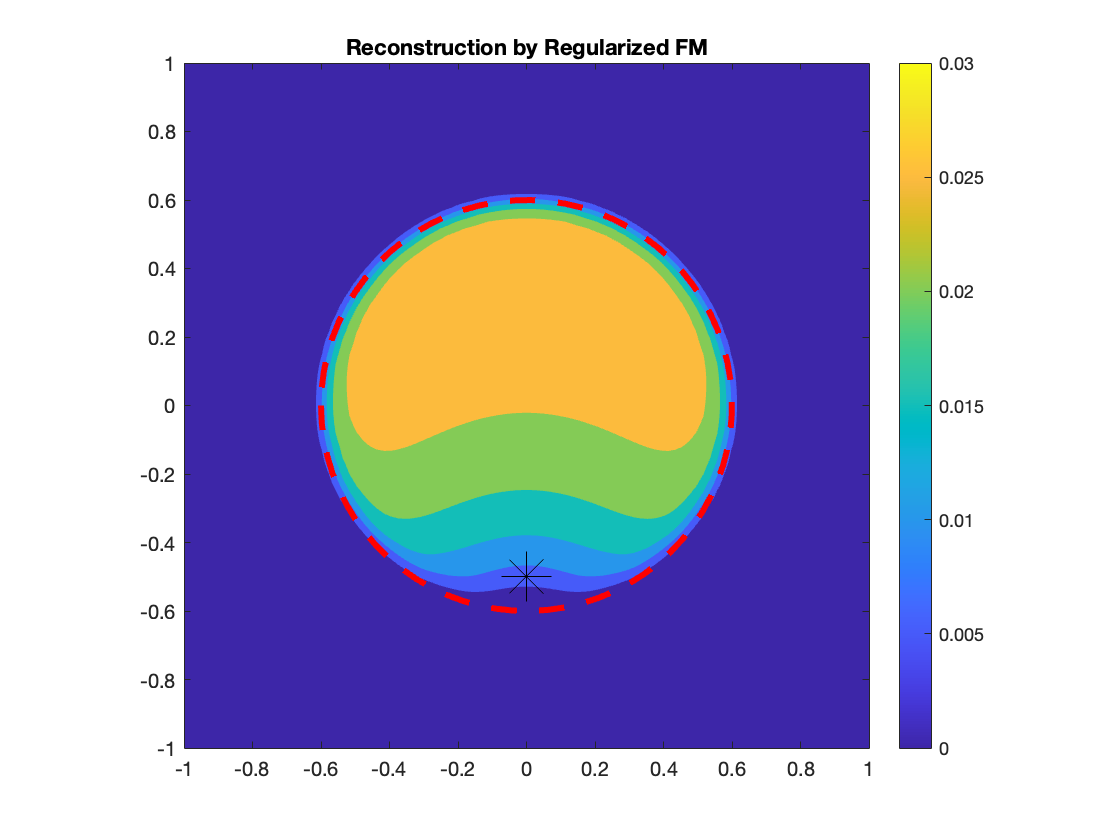}\hspace{-.25in}  
\includegraphics[scale=0.15]{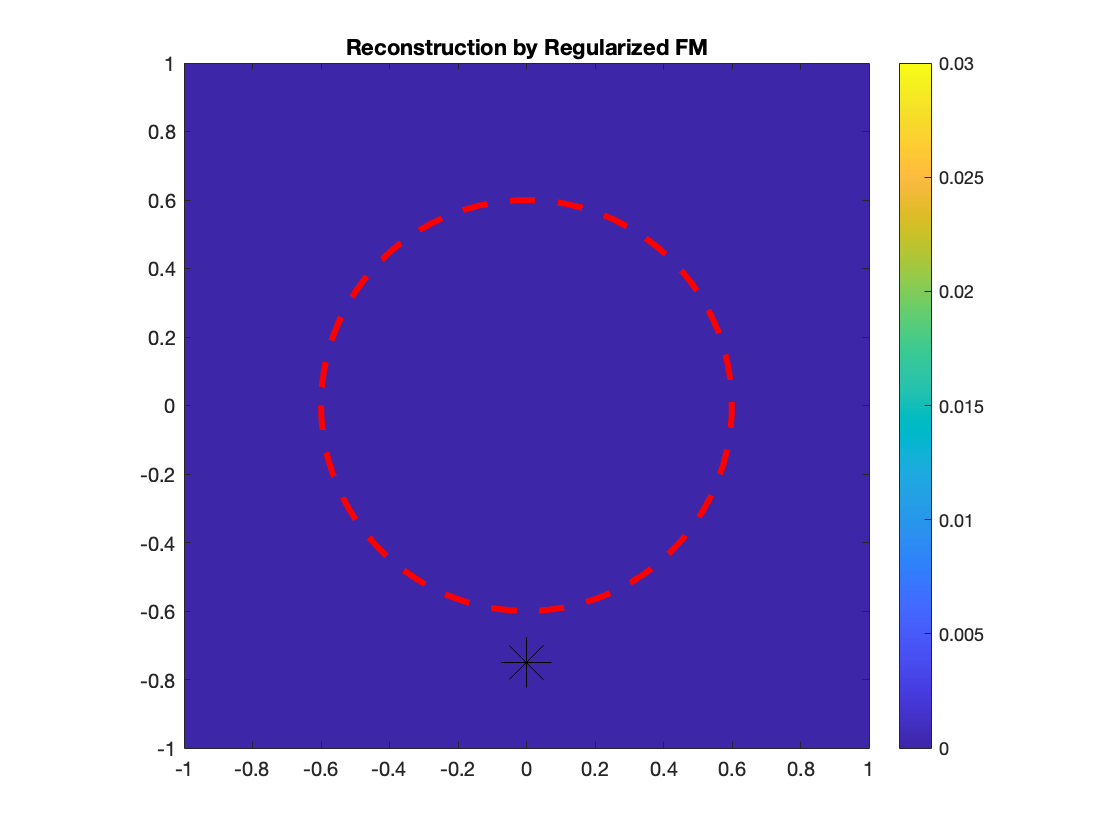}
\caption{We plot $W(z)$, the non-normalized reconstruction, of a circular region with $\rho=0.6$ via the regularized factorization method. Contour plot of $W(z)$ plotted with $\alpha = 10^{-12}$, $\mu = .2$,  $\mu_s = .01, \ell_s^2 = .001$ with $z_0 = (0,0)$ in the top-left, $z_0 = (0,.3)$ on the top-right, $z_0 = (0,-.5)$ on the bottom-left, and $z_0 = (0,-.75)$ on the bottom-right.  In each case, the anchor point is denoted by a black $*$.}
\label{fig:anchorpoints}
\end{figure}

{\color{black}
We note that this experiment is consistent with Theorems \ref{thm:difference_G} and \ref{main-thm} in that for anchor points located in $D$, corresponding to the top-left, top-right, and bottom-left plots of Figure \ref{fig:anchorpoints}, $W(z) > 0$ for $z \in D$, where the maximum value for each case is approximately $0.03$, and $W(z) \approx 0$ for $z \notin D$. Additionally, the bottom-right plot of Figure \ref{fig:anchorpoints} uses anchor point $z_0 = (0,-.75 ) \in \Om \setminus D$, and note that $W(z) \approx 0$ for all $z \in \Om$. Given that $z \notin D$, Theorem \ref{thm:difference_G} implies that for any $z \in \Om$, $z \neq z_0$
\[\partial_{\nu} \mathbb{G}(\cdot , z) \big \rvert_{\partial \Omega} - \partial_{\nu} \mathbb{G}(\cdot , z_0) \big \rvert_{\partial \Omega} \notin {\rm Range} \big( S^{*} \big \rvert_{{\rm span} \{1\}^{\perp}} \big),\] 
and that the hypothesis of Theorem \ref{main-thm} is not satisfied. As expected, this anchor point does not produce a qualitative reconstruction of the inclusion $D$. We refer the reader to Theorems \ref{thm:ps-range1} and \ref{thm:ps-range2} for additional details. In summary, Figure \ref{fig:anchorpoints} illustrates that anchor points within the inclusion lead to a qualitative reconstruction of the full subvolume $D$, whereas those outside, do not.

For all forthcoming examples, we fix the anchor point $z_{0} = 0 \in D$. Since $\partial_\nu \mathbb{G}(\cdot , 0) $ is constant, \eqref{discretized-difference} reduces to subtracting the average. That is,
\[
{\bf b}_z = \partial_\nu \mathbb{G}(\cdot , z) - \fint_{\partial \Omega} \partial_\nu \mathbb{G}(s , z) ds .\]
With this anchor point fixed, we will plot the normalized imaging functional
\begin{equation}
\quad W_{\text{nor}}(z) = \frac{W (z)}{\|{W (z)}\|_{\infty}},
\end{equation}
where $W(z)$ is defined in \eqref{w-asis}, in order to highlight the separation of scales. Consistent with Theorem \ref{main-thm}, $W_{\text{nor}}(z)$ is comparable to an indicator function defined on $D$ in that $W_{\text{nor}}(z) \approx 1$ provided that $z \in D$ and $W_{\text{nor}}(z) \approx 0$ provided that $z \notin D$. All subsequent examples use $W_{\text{nor}}(z)$ to recover $D$.} \\

\noindent
\textbf{Example 1:}
In our first example we let $\rho = 0.7$ for the reconstruction in Figure \ref{num1}. Presented is a contour plot of the imaging functional $W_{\text{nor}}(z)$, where we let $\delta= 0$ which corresponds to $0\%$ relative random noise added to the data. We fix the boundary parameters at $\mu_s = 0.1$ and $\ell_s^2 = 0.001$, as well as the ratio of the shear modulus $\mu = 2$. We vary the regularization parameter for the Spectral cut-off method.

\begin{figure}[h!]
\centering 
\includegraphics[scale=0.15]{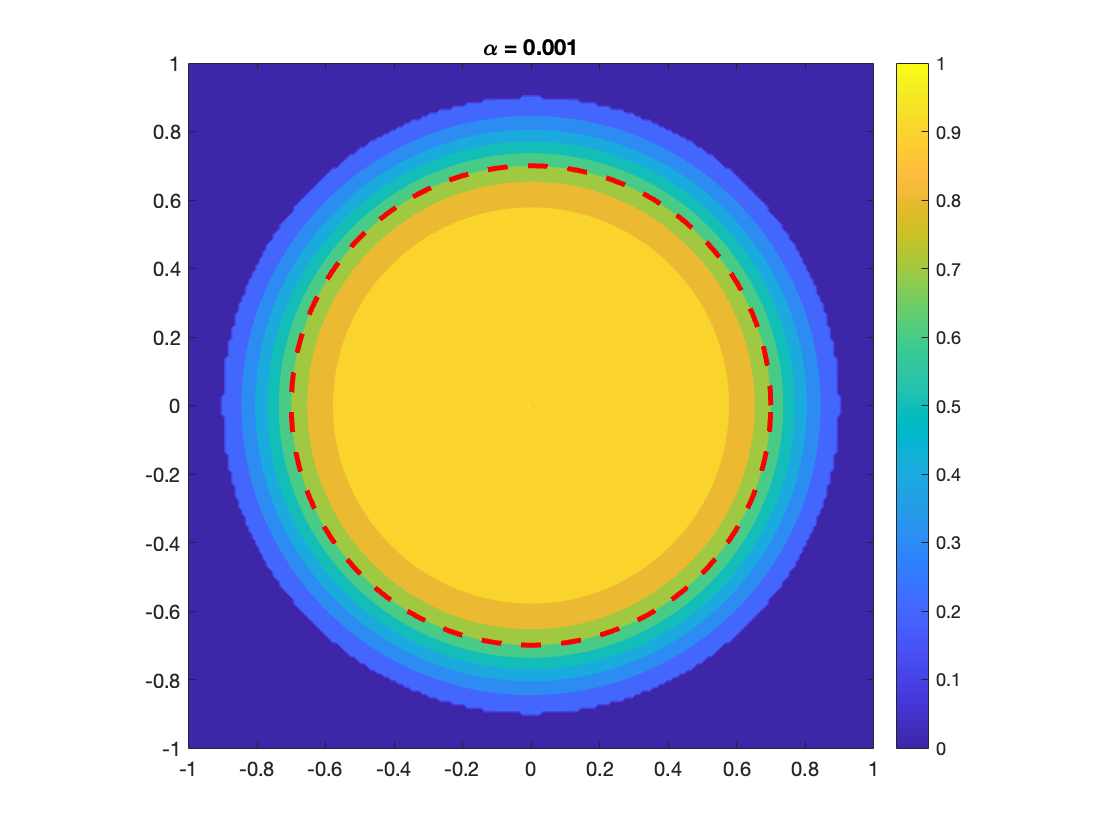}\hspace{-.25in} 
\includegraphics[scale=0.15]{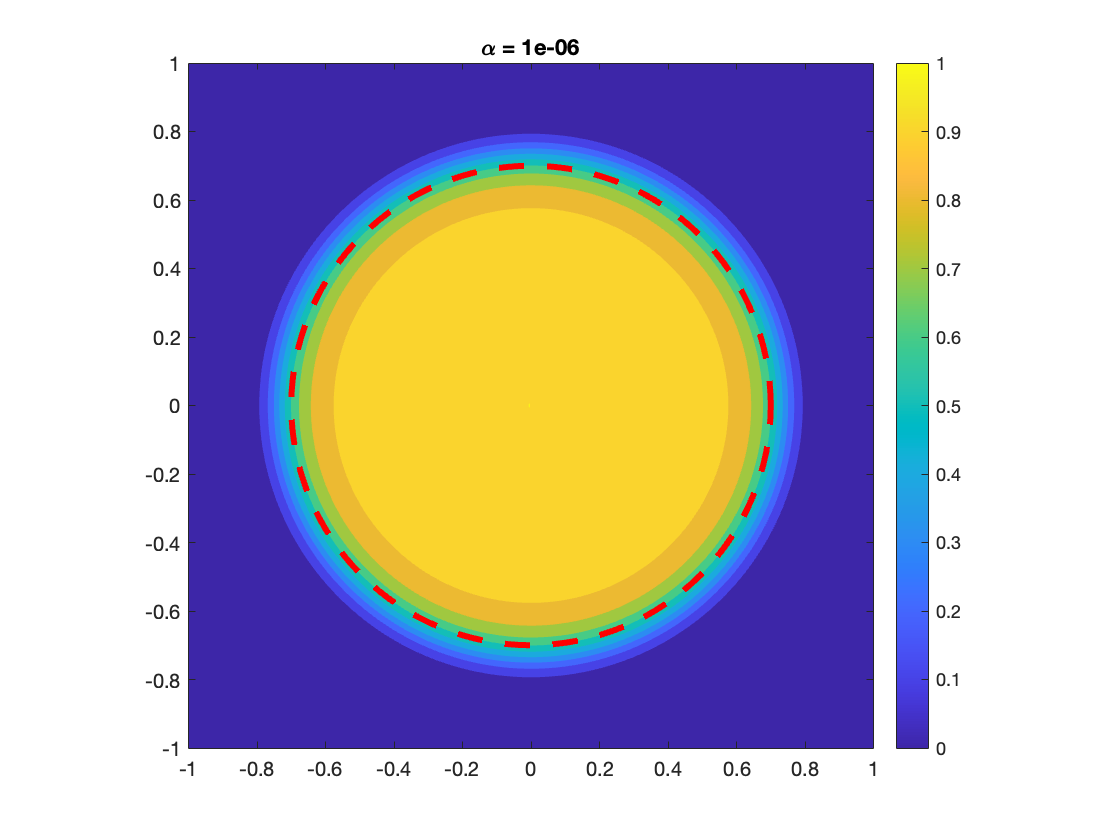} 
\includegraphics[scale=0.15]{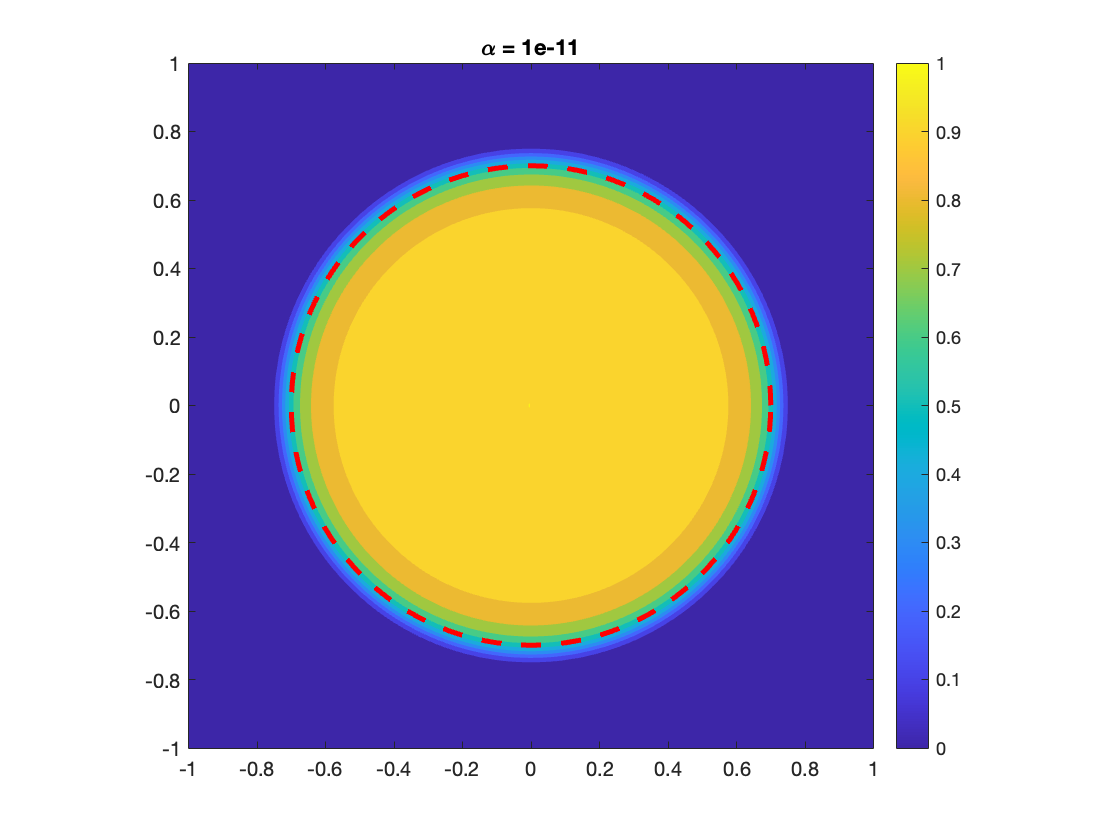}\hspace{-.25in}  
\includegraphics[scale=0.15]{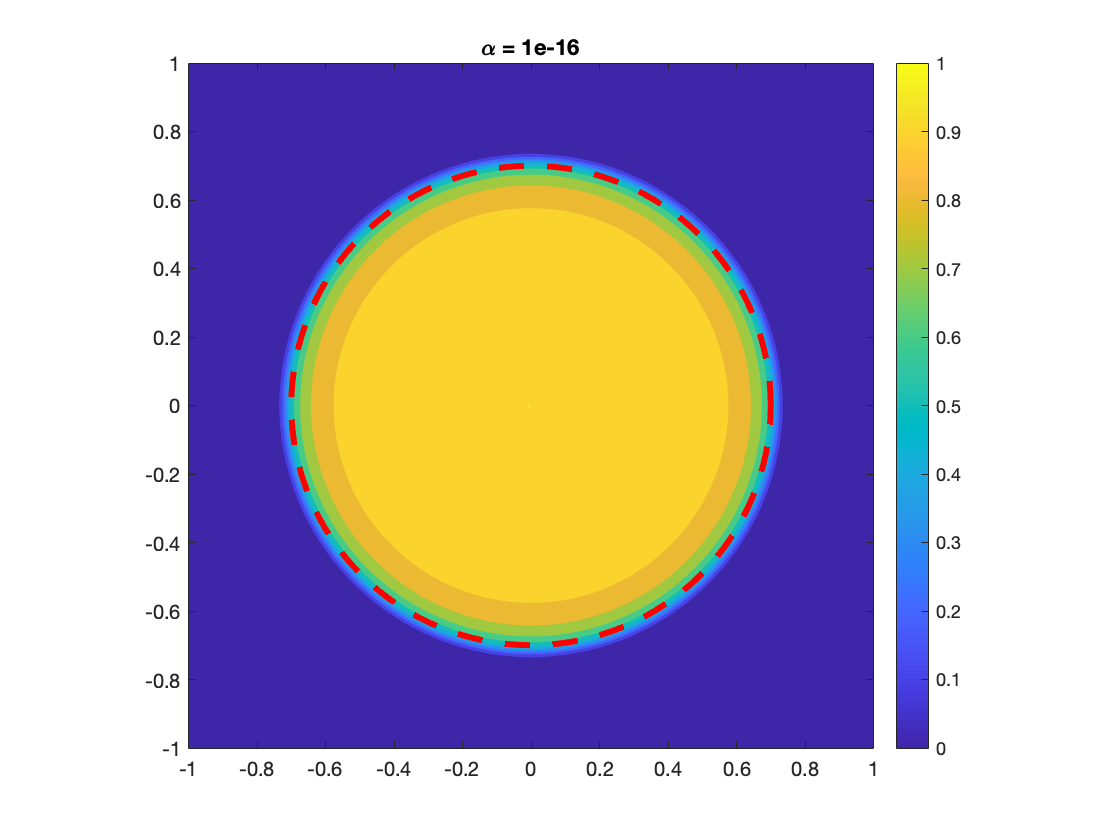}
\caption{Reconstruction of a circular region with $\rho=0.7$ via the regularized factorization method. Contour plot of $W_{\text{nor}}(z)$ plotted with $\alpha = 10^{-3}$ in the top-left, $\alpha = 10^{-6}$ on the top-right, $\alpha = 10^{-11}$ on the bottom-left, and $\alpha = 10^{-16}$ on the bottom-right.}
\label{num1}
\end{figure}
\noindent 
Figure \ref{num1} illustrates the importance of choosing a small value for regularization parameter $\alpha$. For the plot on the top-left, we chose $\alpha = 10^{-3}$, whereas for the plot on the bottom-right, we chose $\alpha = 10^{-16}$. For all plots, the higher values of the imaging functional are concentrated in the center of the region $D$. Across the boundary $\partial D$ (dotted in red), the values begin to decay to zero. Indeed, all plots exhibit the binary behavior of $W_{\text{nor}}(z) \approx 1$ for $z \in D$ and $W_{\text{nor}}(z) \approx 0$ for $z \notin D$. However, the plots on the bottom row illustrate how a much smaller value of $\alpha$ causes $W_{\text{nor}}(z)$ to decay to zero much faster for $z \notin D$. This creates a sharper reconstruction for the boundary $\partial D$. For most of our remaining examples, we heuristically choose very small values of $\alpha$.\\   

\noindent
\textbf{Example 2:}
In our second example we test the effect of including error in our data. We continue to let $\rho = 0.7$ and the shear modulus $\mu =2$ for the reconstruction in Figure \ref{num2}. The boundary parameters $\mu_s = 0.1$ and $\ell_s^2 = 0.001$ are also the same from Example 1. The regularization parameter for the Spectral cut-off method is taken to be $\alpha = 10^{-16}$. Here we vary the relative added noise, $\delta$, to the data as described in \eqref{delta-error}. \\

\begin{figure}[h!]
\centering 
\includegraphics[scale=0.15]{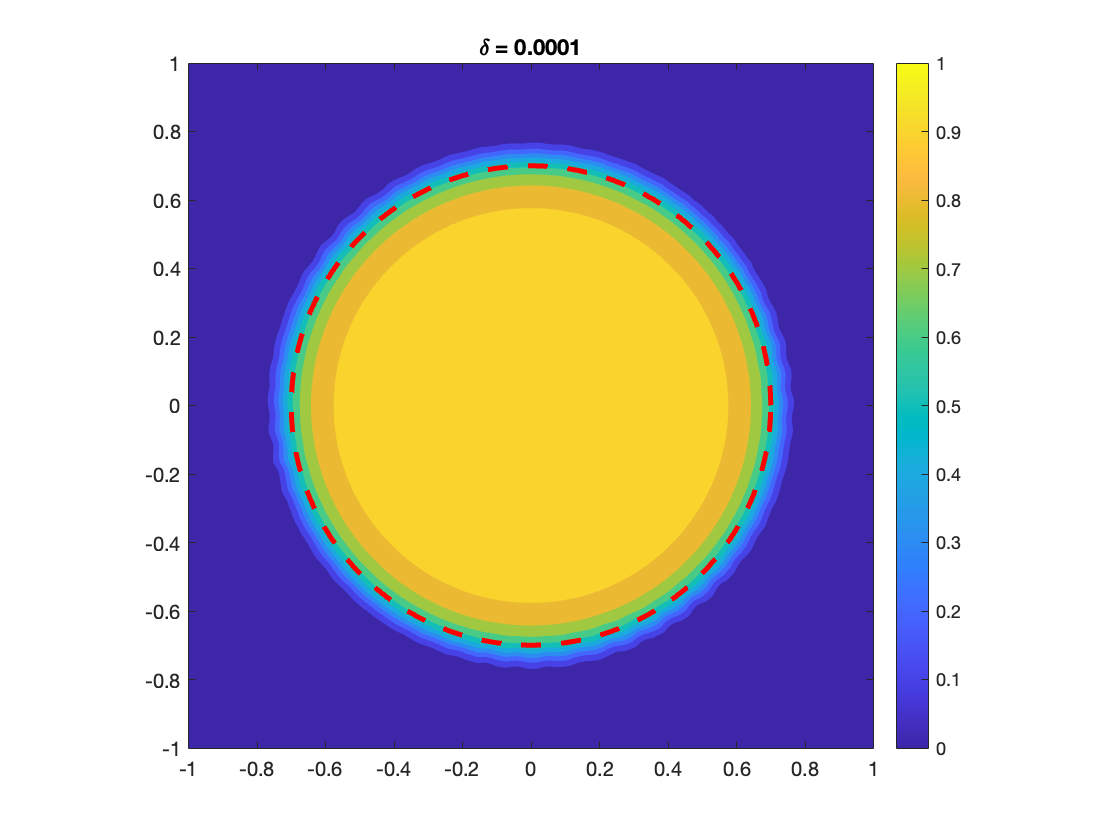} \hspace{-.25in}
\includegraphics[scale=0.15]{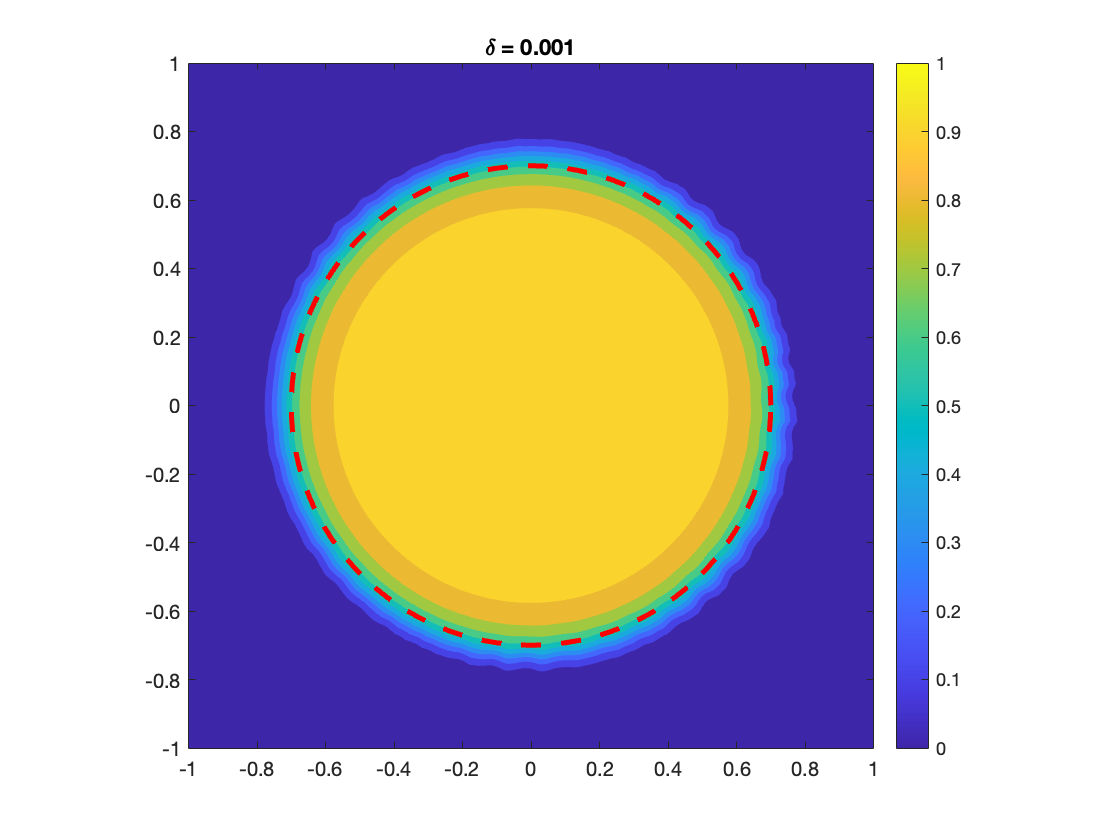}
\includegraphics[scale=0.15]{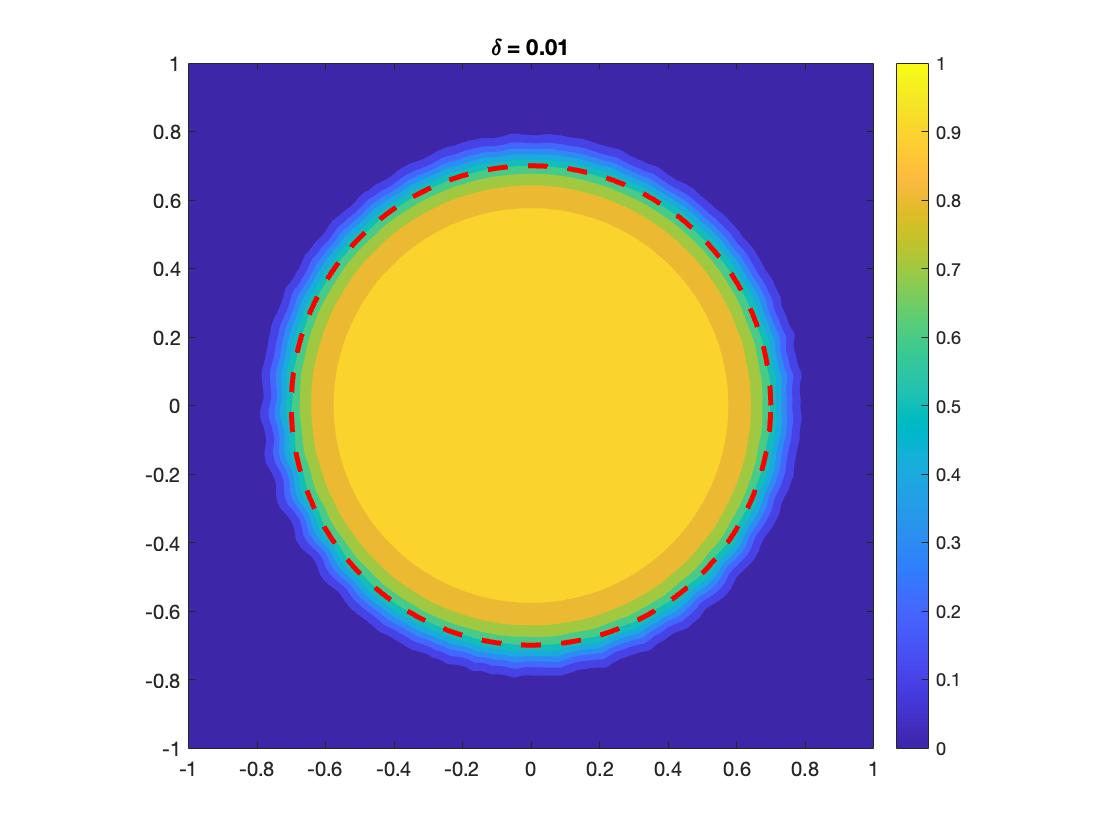}\hspace{-.25in}
\includegraphics[scale=0.15]{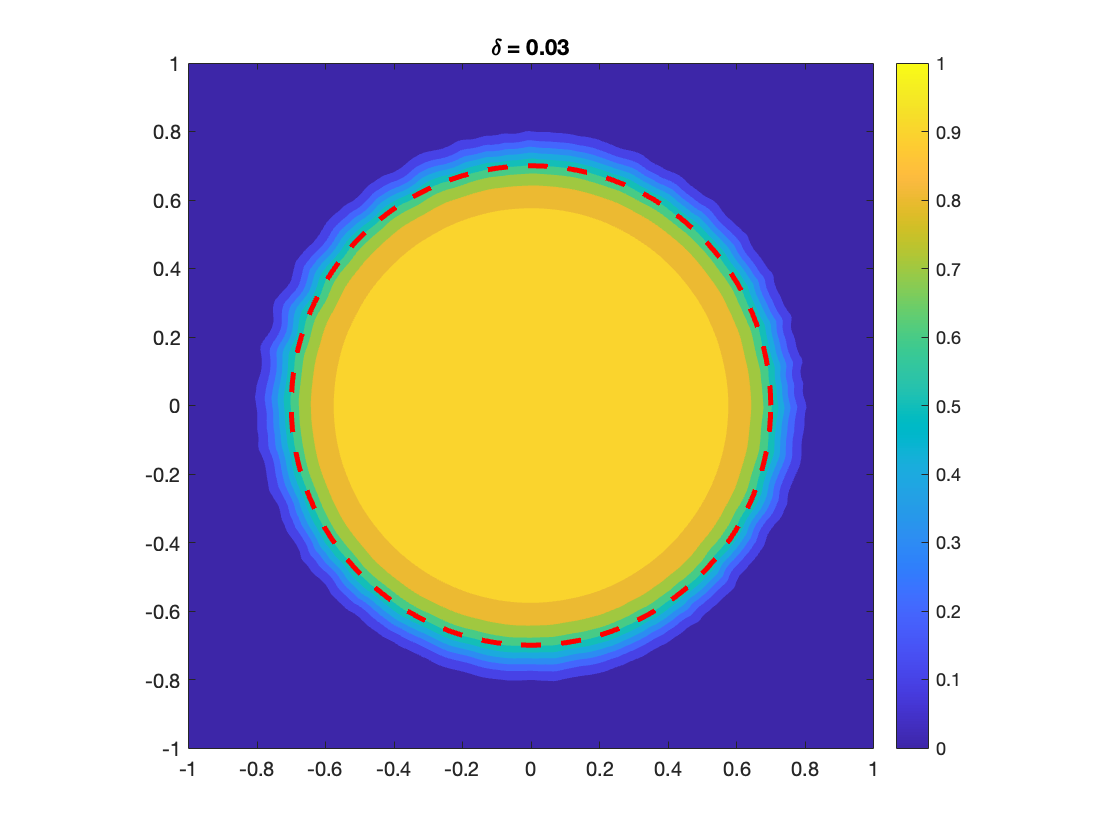}
\caption{Reconstruction of a circular region with $\rho=0.7$ via the regularized factorization method. Contour plot of $W_{\text{nor}}(z)$ plotted with $0.01 \%$ error on the top-left, $0.1 \%$ error on the top-right, $1\%$ error on the bottom-left, and $3 \%$ on the bottom-right.}
\label{num2}
\end{figure}

\noindent
\textbf{Example 3:}
In this example we study the effect of adding relative error $\delta$ to the data in conjunction with the regularization parameter
$\alpha$. Here we let $\rho = 0.55$, and presented in Figure \ref{num3} are contour plots of the imaging functional $W_{\text{nor}}(z)$. We provide a $2 \times 2$ array of plots where the rows correspond to the relative error $\delta = 0.001$ and $\delta = 0.03$, which represent $0.1 \%$ error and $3\%$ error, respectively. The columns in the plot array correspond to the Spectral cut-off regularization parameter $\alpha = 10^{-4}$ and $\alpha = 10^{-8}$. The boundary parameters remain at $\mu_s = 0.1$ and $\ell_s^2 = 0.0001$, where the ratio of the shear modulus $\mu = 2$.\\

\begin{figure}[h!]
\centering 
\includegraphics[scale=0.15]{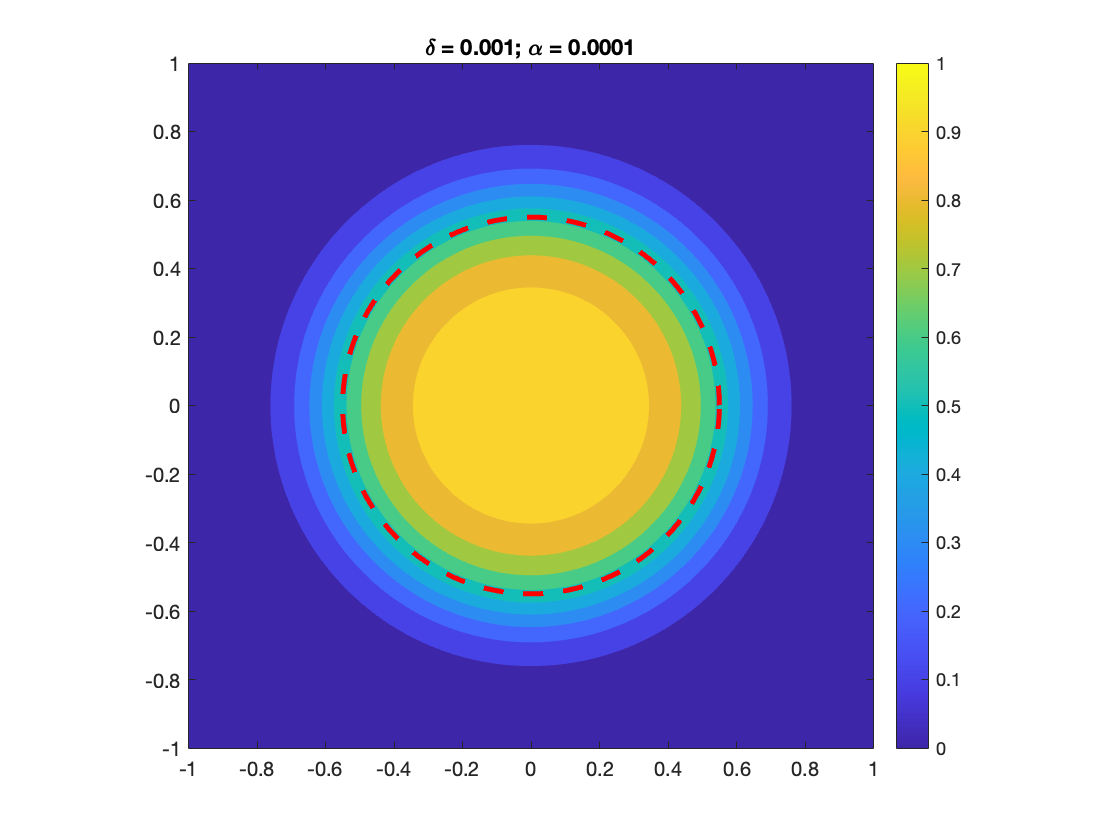}\hspace{-.25in}
\includegraphics[scale=0.15]{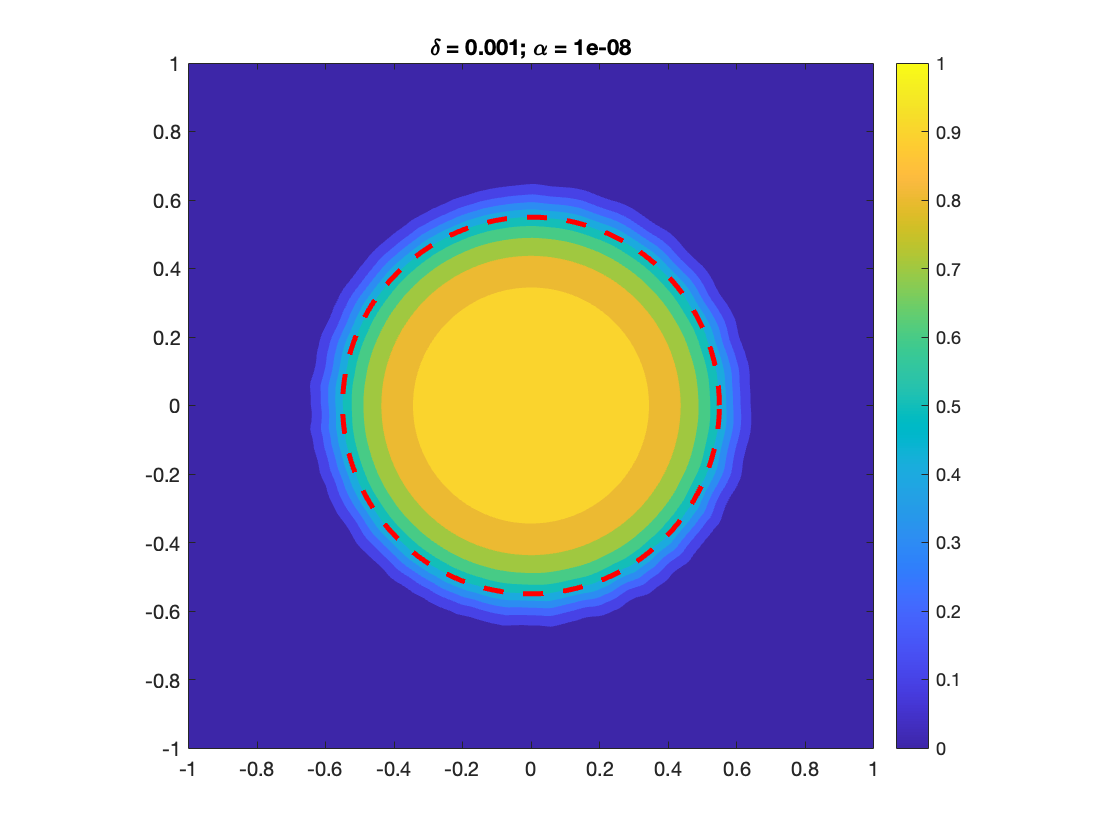}
\includegraphics[scale=0.15]{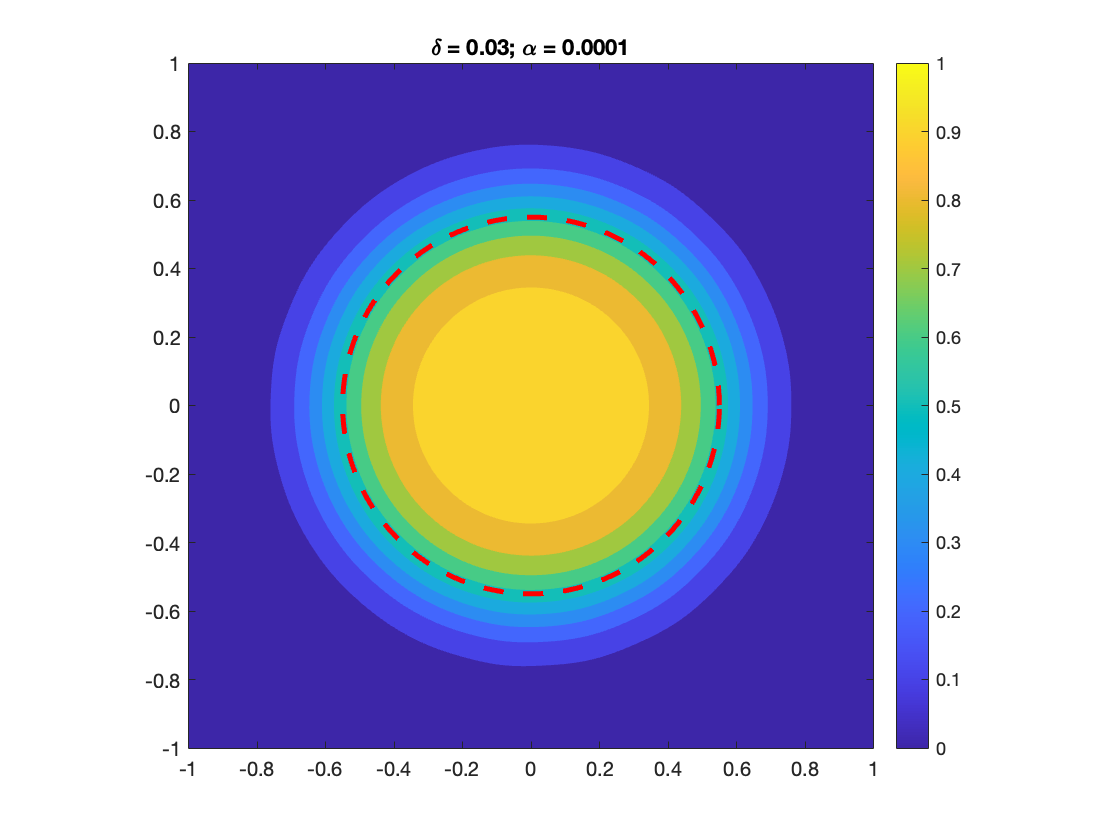} \hspace{-.25in}
\includegraphics[scale=0.15]{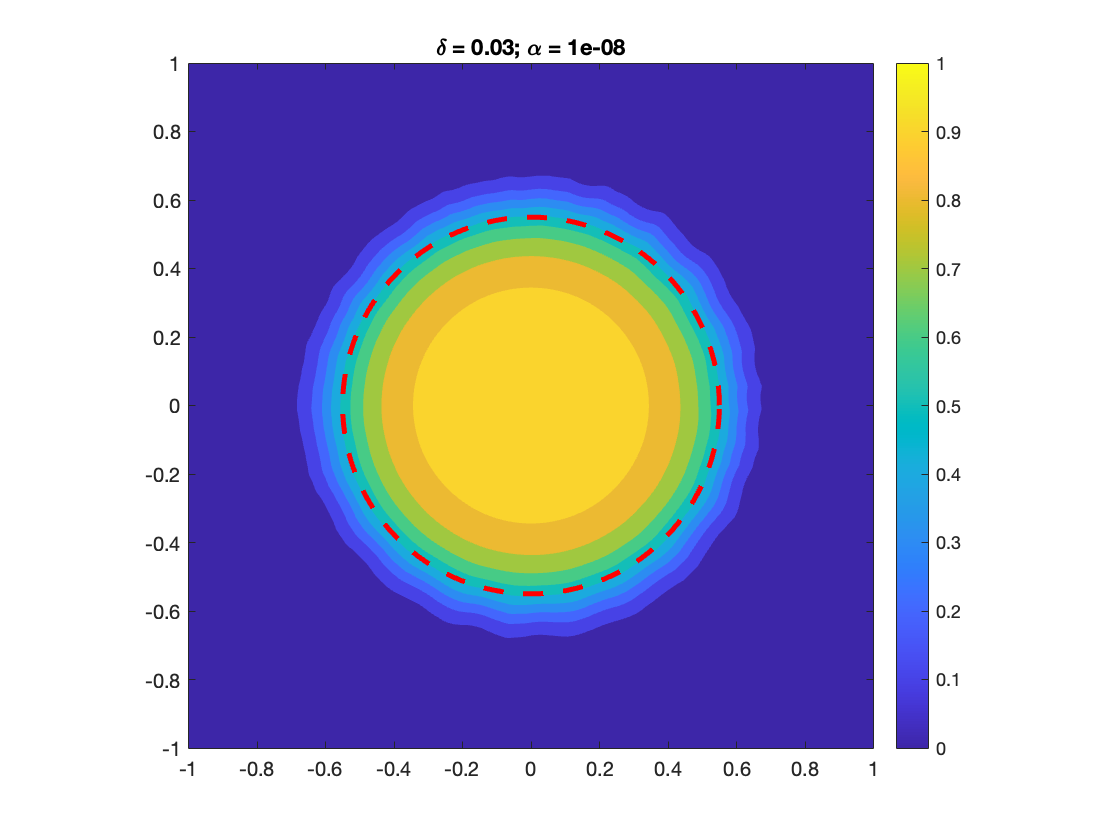}
\caption{Reconstruction of a circular region with $\rho=0.55$. Contour plot of $W_{\text{nor}}(z)$ plotted with $\mu = 0.55$ on the top-left, $\mu = 1$ on the top-right, $\mu = 10$ on the bottom-left, and $\mu = 100$ on the bottom-right.}
\label{num3}
\end{figure}

\noindent
\textbf{Example 4:}
Here we let $\rho = 0.8$, and in Figure \ref{num4} we present contour plots of the imaging functional $W_{\text{nor}}(z)$. We let $\delta= 0.005$ which corresponds to $0.5\%$ relative random noise added to the data. The boundary parameters are smaller from the previous examples where now $\mu_s = 0.01$ and $\ell_s^2 = 0.00001$. Here we vary the ratio of the shear modulus across four values. We begin with the special case where ratio $\mu = 0.8$. This case is meaningful as it is consistent with the hypothesis of Theorem \ref{t-coercive}, which highlights that our reconstruction algorithm is valid for certain values of $\mu$ less than 1. We then proceed to consider $\mu = 1$, which is exactly the case where the shear modulus in the regions $\Omega \setminus D$ and $D$ are equal. We finish this example by varying $\mu$ across two orders of magnitude, specifically $\mu = 10$ and $\mu = 100$. Consistent with our previous examples, the regularization parameter for the Spectral cut-off method is taken to be $\alpha = 10^{-16}$. \\
\begin{figure}[h!]
\centering 
\includegraphics[scale=0.15]{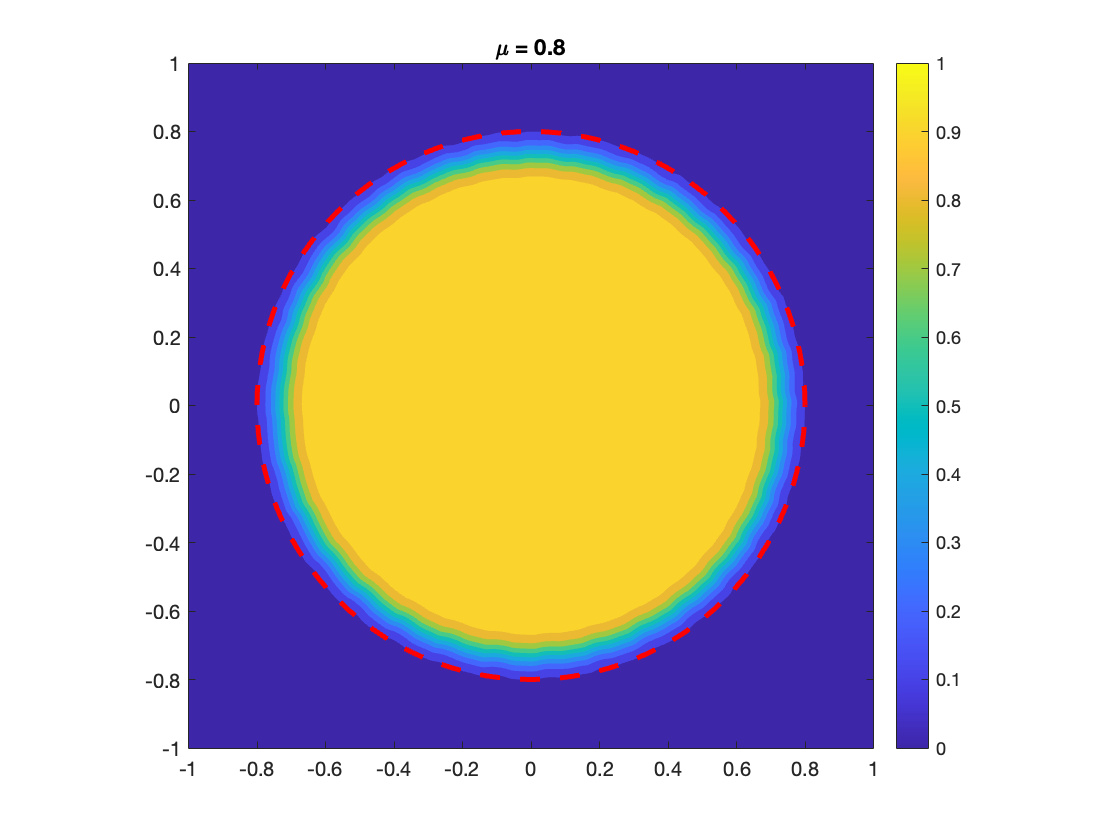}\hspace{-.25in}
\includegraphics[scale=0.15]{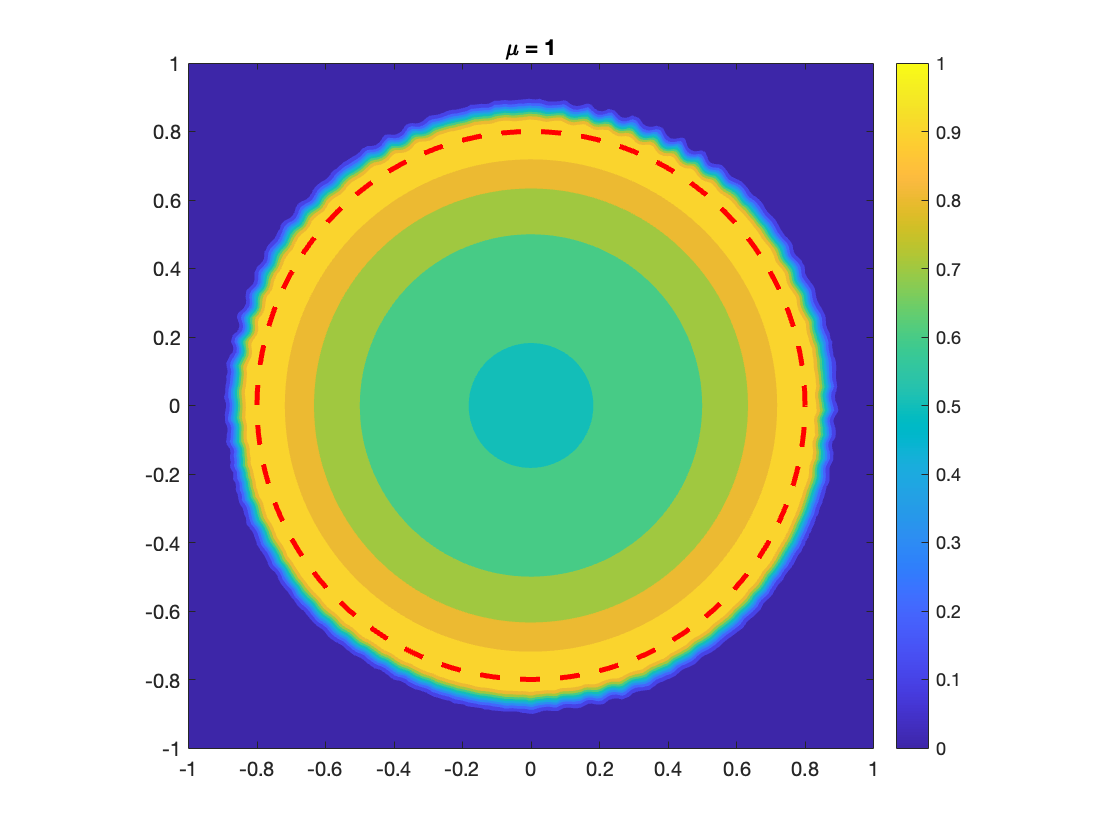}
\includegraphics[scale=0.15]{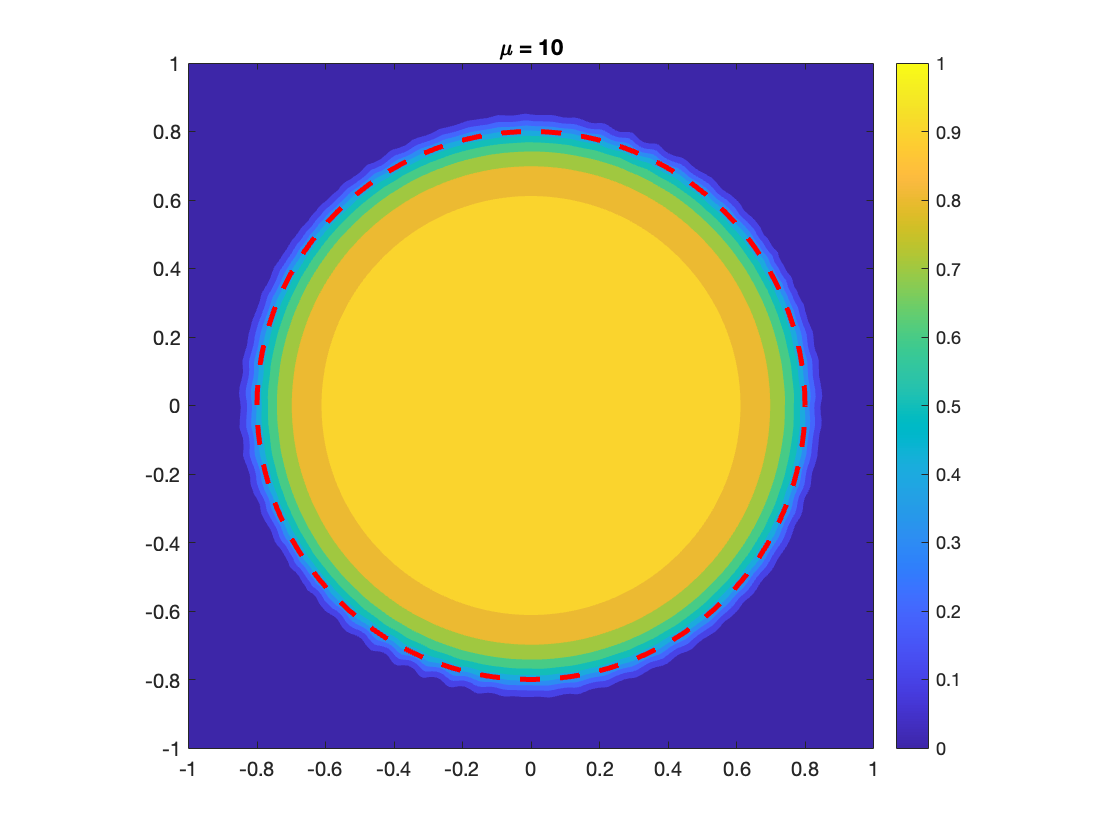} \hspace{-.25in}
\includegraphics[scale=0.15]{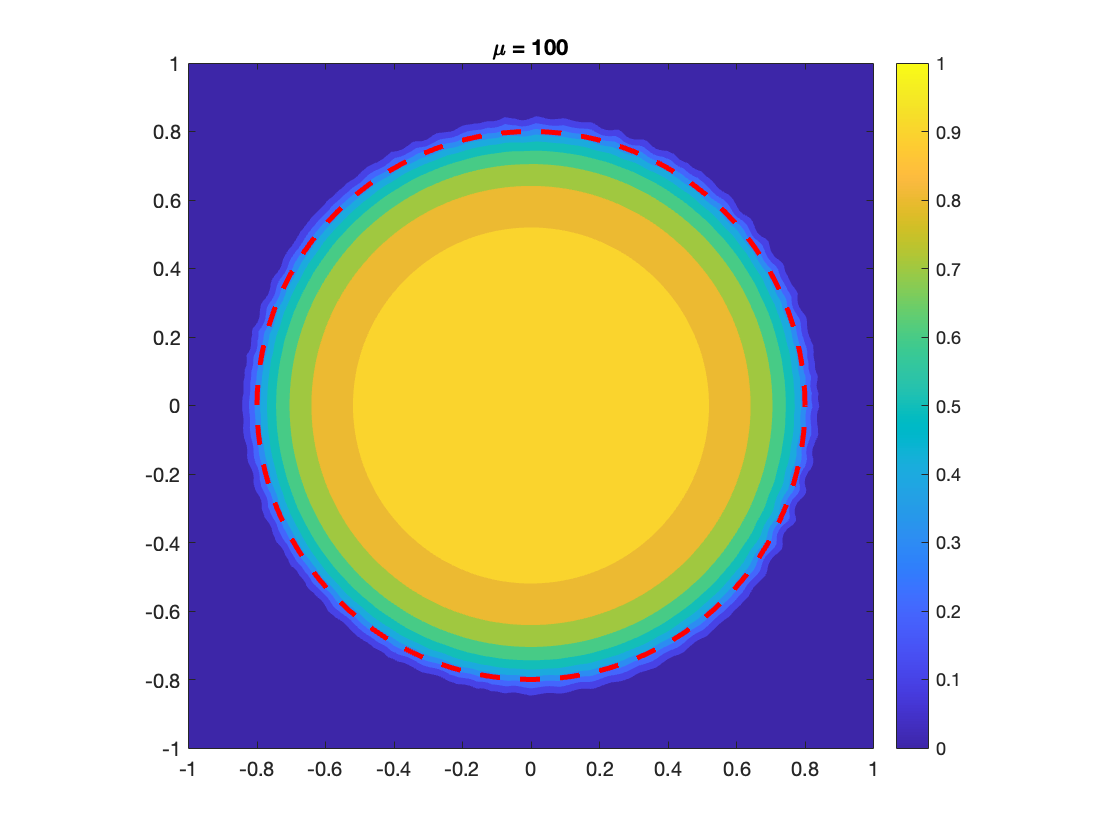}
\caption{Reconstruction of a circular region with $\rho=0.8$. Contour plot of $W_{\text{nor}}(z)$ plotted with $\mu = 0.8$ on the top-left, $\mu = 1$ on the top-right, $\mu = 10$ on the bottom-left, and $\mu = 100$ on the bottom-right.}
\label{num4}
\end{figure}

\noindent
\textbf{Example 5:}
In this example, we analyze the case where the interior region is smaller than in our previous cases. To introduce this case, we do not add error to the data, i.e. $\delta = 0$ across all plots. Presented in Figure \ref{num5} is a contour plot of the imaging functional $W_{\text{nor}}(z)$, where the boundary parameters are fixed at $\mu_s = 0.001$ and $\ell_s^2 = 0.00000001$. Like in our previous example, we vary the interior $\mu$ across key values. The regularization parameter for the Spectral cut-off method is taken to be $\alpha = 10^{-16}$.\\

\begin{figure}[h!]
\centering 
\includegraphics[scale=0.15]{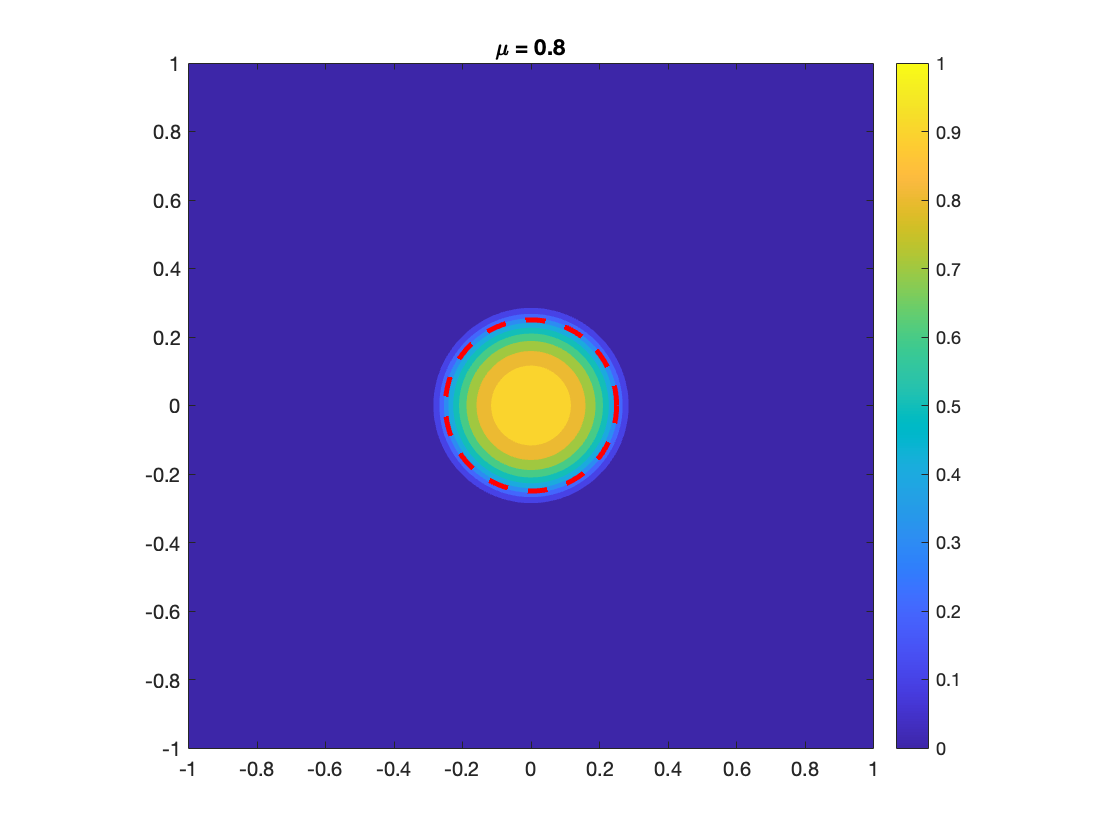}\hspace{-.25in}
\includegraphics[scale=0.15]{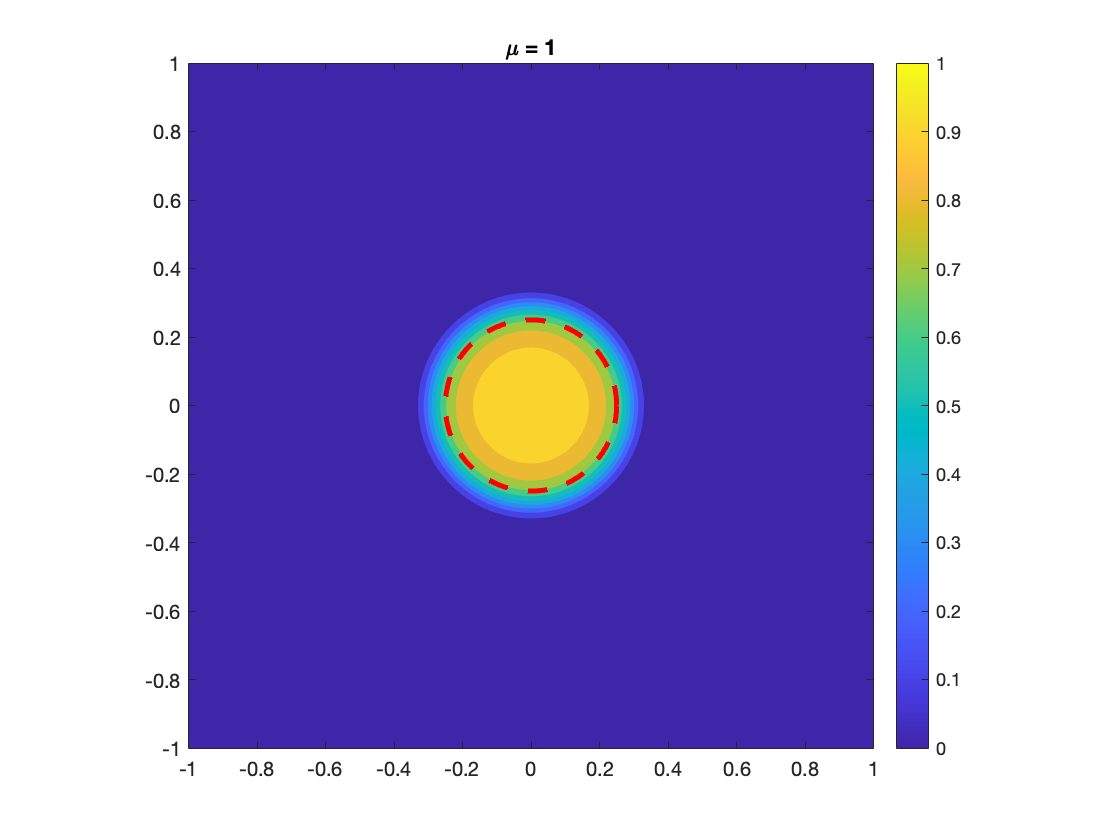}
\includegraphics[scale=0.15]{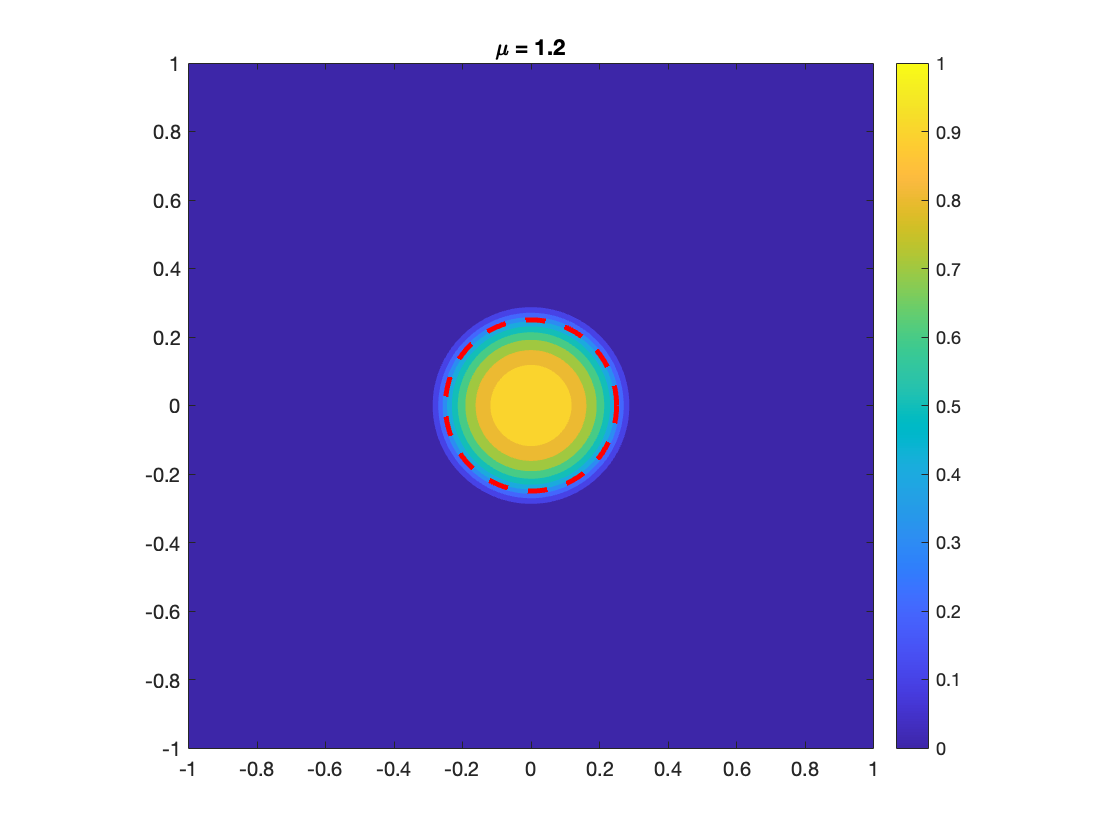} \hspace{-.25in}
\includegraphics[scale=0.15]{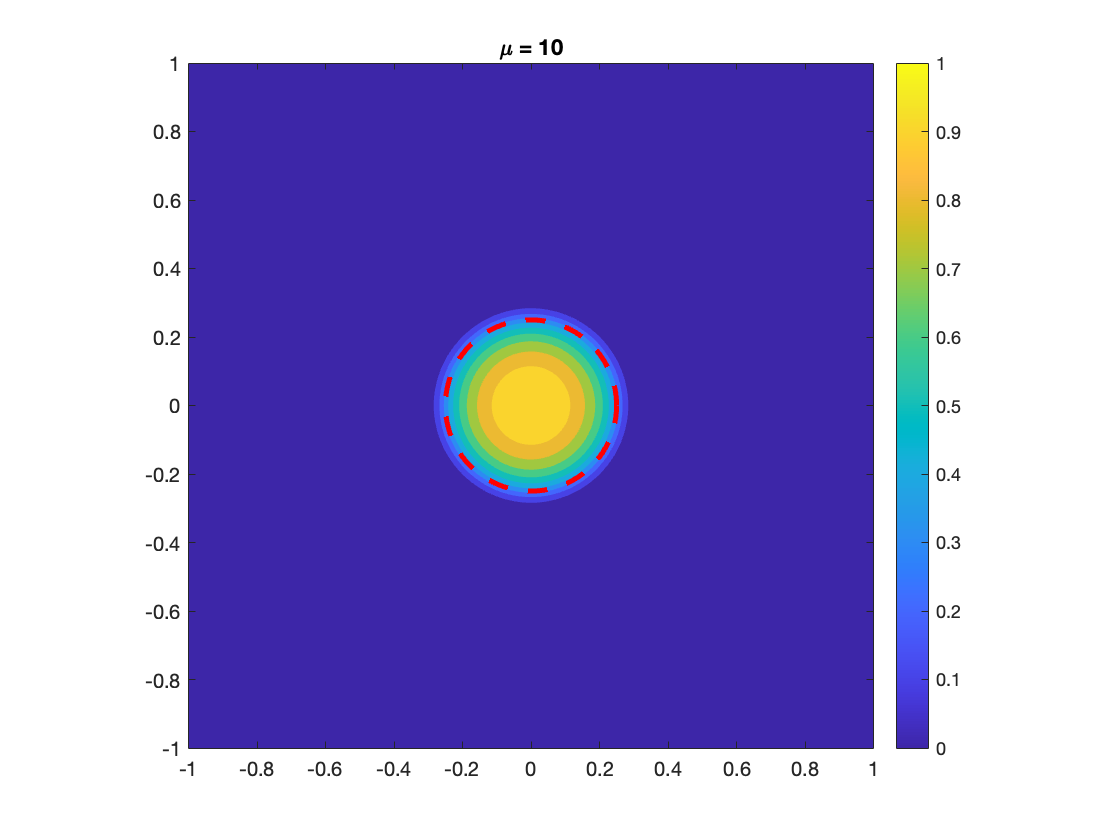}
\caption{Reconstruction of a circular region with $\rho=0.25$. Contour plot of $W_{\text{nor}}(z)$ plotted with $\mu = 0.8$ on the top-left, $\mu = 1$ on the top-right, $\mu = 1.2$ on the bottom-left, and $\mu = 10$ on the bottom-right.}
\label{num5}
\end{figure}

\section{Conclusion}
\label{sec:conclusion}
We have introduced a new form of inverse problem relating to discovering interior inhomogeneities in materials by measuring antiplane shear stress responses only on the outer boundary.  The model we present here is fully linear, though it would be natural to extend the findings here to nonlinear models of elasticity and stress.  The formulation we have derived bears resemblance to inverse problems in impedance and optical tomography, but the material forces at play at the interface between the two materials make the problem distinct and introduce key challenges.  However, we have successfully demonstrated in this linear model that we have uniqueness of measurements based upon material parameters, as well as a mechanism for reconstruction of the interior domain.  While we have focused on developing the machinery here in $2-$dimensional for the sake of clarity, we expect that many of the methods we have presented here can be extended to $3-$dimensional models. Throughout, we have focused on key ideas for implementation, without attempting to optimize certain assumptions, such as the regularity of $\partial \Omega$ and $\partial D$, which can almost certainly be weakened to assuming only Lipschitz.  Numerically, we demonstrated the results we have here on a simple setting of a disc material embedded in an outer disc, which is a convenient framework in which the Dirichlet-to-Neumann maps can be constructed explicitly. {\color{black}We have primarily focused on the single-sampling stage of the reconstruction algorithm, leaving the implementation of the initial two-point search and more general geometries to future work.} It would be interesting to work with various boundary integral solvers as in \cite{smigaj2015solving} to study the problem numerically in a larger set of domains without fixed symmetries.

\appendix
\section{Proofs of main decomposition theorems}\label{app:a}

Here we show the derivation of a spectral decomposition for the positive, compact operator $(\Lambda - \Lambda_0): H^{1/2}(\partial \Omega) \rightarrow H^{-1/2}(\partial \Omega)$ as similarly done in \cite{harris1,kirsch2005factorization,kirsch2007factorization}. The nuance here is that the data operator has a non-trivial kernel. Namely, ${\rm Null} (\Lambda - \Lambda_0) = {\rm span}\{1\}$. By the Riesz Representation Theorem, there exists a bijective isometry $J: H^{-1/2}(\partial \Omega) \rightarrow H^{1/2}(\partial \Omega)$ such that
\[J \ell = f_{\ell} \quad \text{where} \quad (f,f_{\ell})_{H^{1/2}(\partial \Omega)} = \langle f , \ell \rangle_{\partial \Omega} \quad \text{for all} \enspace f \in H^{1/2}(\partial \Omega),\]
where $(\cdot , \cdot)_{H^{1/2}(\partial \Omega)}$ denotes the inner product in $H^{1/2}(\partial \Omega)$. Since $\langle \cdot , \cdot \rangle_{\partial \Omega}$ is a sesquilinear dual-product, we have that $J$ is linear. We now consider the compact operator $J(\Lambda - \Lambda_0):H^{1/2}(\partial \Omega) \rightarrow H^{1/2}(\partial \Omega)$ where
\[(f , J(\Lambda - \Lambda_0)f)_{H^{1/2}(\partial \Omega)} = \langle f , (\Lambda - \Lambda_0)f \rangle_{\partial \Omega} > 0 \quad \text{for all} \enspace f \in H^{1/2}(\partial \Omega) \setminus \big({\rm Null}(\Lambda - \Lambda_0) \cup \{0\}\big)\]
since $(\Lambda - \Lambda_0)$ is non-negative. Thus, the composition $J(\Lambda - \Lambda_0)$ is a self-adjoint, compact operator. Therefore, by the Hilbert-Schmidt Theorem, it holds that there exists an eigenvalue decomposition
\begin{equation}\label{eqn:spectral-JA}
    \{\lambda_n ; f_n\}_{n\in\mathbb{N} \cup \{0\}} \in \mathbb{R}_{\geq 0} \times H^{1/2}(\partial \Omega) \quad \text{where} \quad (J(\Lambda - \Lambda_0))f = \sum_{n \in \mathbb{N} \cup \{0\}} \lambda_n (f,f_n)_{H^{1/2}(\partial \Omega)} f_n,
\end{equation}
where $\{\lambda_n\}_{n\geq 1}$ is a monotonically non-increasing sequence converging to zero and $f_n$ is an orthonormal basis of $H^{1/2}(\partial \Omega)$. Since ${\rm Null} (\Lambda - \Lambda_0) = {\rm span}\{1\}$, we have that $\lambda_0 = 0$ with corresponding eigenvector $f_0 = {\rm span}\{1\}$. Thus, for all $f\in H^{1/2}(\partial \Omega)$
\[ (J(\Lambda - \Lambda_0))f = \sum_{n=1} \lambda_n (f,f_n)_{H^{1/2}(\partial \Omega)} f_n.\]
We now define $\ell_n \in H^{-1/2}(\partial \Omega)$ to be the unique solution to $J \ell_n = f_n$ for any $n\in\mathbb{N} \cup \{0\}$. Furthermore, it holds that
\begin{equation}\label{eqn:basis-duality}
   \langle f_m , \ell_n \rangle_{\partial \Omega} = (f_m , f_n)_{H^{1/2}(\partial \Omega)} = \delta_{mn} \quad \text{for any} \enspace m,n \in \mathbb{N} \cup \{0\}. 
\end{equation}

Thus, $\ell_n$ are the corresponding dual-basis of $H^{-1/2}(\partial \Omega)$. We will show that $\{\ell_n\}_{n \in \mathbb{N} \cup \{0\}}$ is a complete orthonormal set for $H^{-1/2}(\partial \Omega)$ and construct a representation for any $\ell \in H^{-1/2}(\partial \Omega)$ and the operator $(\Lambda - \Lambda_0)$. Note that $H^{-1/2}(\partial \Omega)$ is a Hilbert space whose inner-product is defined by
\[(\ell , \varphi)_{H^{-1/2}(\partial \Omega)} = (f_{\ell} , f_{\varphi})_{H^{1/2}(\partial \Omega)} \quad \text{for all} \enspace \ell, \varphi \in H^{-1/2}(\partial \Omega),\] 
where $J \ell = f_{\ell}$ and $J \varphi = f_{\varphi}$. From this definition, we have that
\[(\ell_m , \ell_n )_{H^{-1/2}(\partial \Omega)} = (f_m , f_n)_{H^{1/2}(\partial \Omega)} = \delta_{mn} \quad \text{for all} \enspace m,n \in \mathbb{N} \cup \{ 0\}.\]
Thus, $\{\ell_n \}_{n\in\mathbb{N} \cup \{0\}}$ is an orthonormal set in $H^{-1/2}(\partial \Omega)$. To show it is complete, assume that $\ell \in H^{-1/2}(\partial \Omega)$ is orthogonal to the set $\{\ell_n\}_{n \in \mathbb{N} \cup \{0\}}$. So, for any $n \in \mathbb{N} \cup \{0\}$, it holds that
\[0=(\ell , \ell_n)_{H^{-1/2}(\partial \Omega)} = (f_{\ell} , f_n)_{H^{1/2}(\partial \Omega)},\]
which implies that $f_{\ell} = 0$. Furthermore, since $J$ is an isometry, $\ell = 0$, proving that $\{\ell_n\}_{n \in \mathbb{N} \cup \{0\}}$ is complete. Moreover, we now conclude that the sequence $\{\ell_n\}_{n \in \mathbb{N} \cup \{0\}}$ indeed forms an orthonormal basis of $H^{-1/2}(\partial \Omega)$. We now have the following representation for any $\ell \in H^{-1/2}(\partial \Omega)$
\[\ell = \sum_{n\in\mathbb{N} \cup \{0\}} (\ell , \ell_n)_{H^{-1/2}(\partial \Omega)} \quad \text{where} \enspace \langle f_n , \ell \rangle_{\partial \Omega} = (\ell , \ell_n)_{H^{-1/2}(\partial \Omega)}.\]
Thus, we have that
\begin{equation} \label{eqn:spectral-ell}
    \ell = \sum_{n\in\mathbb{N} \cup \{0\}} \langle f_n , \ell \rangle_{\partial \Omega} \,\ell_n \quad \text{for all} \enspace \ell \in H^{-1/2}(\partial \Omega).
\end{equation}
From the injectivity of the isometry $J$, the operator $(\Lambda - \Lambda_0)$ has the spectral decomposition
\begin{equation}\label{eqn:spectral-A}
    (\Lambda - \Lambda_0)f = \sum_{n\in\mathbb{N} \cup \{0\}} \lambda_n \langle f , \ell_n \rangle_{\partial \Omega} \, \ell_n \quad \text{for all} \enspace f \in H^{1/2}(\partial \Omega).
\end{equation}
In the following theorem, we characterize the range of an operator by the spectral decomposition of the compact operator $(\Lambda - \Lambda_0)$ given by \eqref{eqn:spectral-A}.
\begin{theorem}\label{thm:ps-range1}
    Let $P$ be the orthogonal projection onto ${\rm span}\{1\}^{\perp}$ and $(\Lambda - \Lambda_0) : H^{1/2}(\partial \Omega) \rightarrow H^{-1/2}(\partial \Omega)$ be a non-negative operator with factorization $(\Lambda - \Lambda_0) = (PS)^{*}TPS$ such that $S:H^{1/2}(\partial \Omega) \rightarrow H^{7/2}(\partial \Omega)$ and $T:H^{7/2}(\partial D) \rightarrow H^{-7/2}(\partial D)$ are bounded linear operators. Assume that $S$ is compact and injective, as well as $T$ coercive on ${\rm Range}(PS)$. Then we have that 
    \[ \ell \in {\rm Range}((PS)^{*}) \quad \text{if and only if} \quad \sum_{n=1} \frac{1}{\lambda_n} \big | \langle f_n, \ell \rangle_{\partial \Omega} \big |^2 < \infty,\]
    where $\{\lambda_n ; f_n \}_{n\in \mathbb{N} \cup \{0\}} \in \mathbb{R}_{\geq 0} \times H^{1/2}(\partial \Omega)$ are given by the spectral decomposition \eqref{eqn:spectral-A} of $(\Lambda - \Lambda_0)$.
\end{theorem}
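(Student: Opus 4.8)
The plan is to reduce the claim to the classical Picard criterion for the square-root factorization and then read off convergence of the series from the singular system of $Q$. First I would invoke Theorem \ref{range-adjs-q-s}: since $S$ is compact and injective and $T$ is coercive on ${\rm Range}(PS)$, the two factorizations $(\Lambda-\Lambda_0)=Q^{*}Q$ and $(\Lambda-\Lambda_0)=(PS)^{*}TPS$ yield ${\rm Range}\big((PS)^{*}\big)={\rm Range}(Q^{*})$. Hence it suffices to characterize ${\rm Range}(Q^{*})$ through the spectral data $\{\lambda_n; f_n\}$ of $(\Lambda-\Lambda_0)$ given in \eqref{eqn:spectral-A}.

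Next I would build the singular system of $Q$ adapted to this basis. Using $Q^{*}Q=\Lambda-\Lambda_0$ together with \eqref{eqn:spectral-A} and the duality relations \eqref{eqn:basis-duality}, one computes $\|Qf_n\|_{L^{2}(\partial\Omega)}^{2}=\langle f_n,(\Lambda-\Lambda_0)f_n\rangle_{\partial\Omega}=\lambda_n$ and $(Qf_m,Qf_n)_{L^{2}(\partial\Omega)}=\lambda_n\delta_{mn}$. Consequently $Qf_0=0$ (the kernel direction), while for $n\geq 1$ the vectors $u_n:=\lambda_n^{-1/2}Qf_n$ form an orthonormal system in $L^{2}(\partial\Omega)$ with $Qf_n=\sqrt{\lambda_n}\,u_n$.

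With this system in hand the two directions are routine. For the forward implication, if $\ell=Q^{*}\psi$ then $\langle f_n,\ell\rangle_{\partial\Omega}=(Qf_n,\psi)_{L^{2}(\partial\Omega)}=\sqrt{\lambda_n}\,(u_n,\psi)_{L^{2}(\partial\Omega)}$ for $n\geq 1$, so that $\sum_{n\geq1}\lambda_n^{-1}|\langle f_n,\ell\rangle_{\partial\Omega}|^{2}=\sum_{n\geq1}|(u_n,\psi)_{L^{2}(\partial\Omega)}|^{2}\leq\|\psi\|_{L^{2}(\partial\Omega)}^{2}<\infty$ by Bessel's inequality. For the converse, assuming the series converges I would set $\psi:=\sum_{n\geq1}\lambda_n^{-1/2}\langle f_n,\ell\rangle_{\partial\Omega}\,u_n$, which defines an element of $L^{2}(\partial\Omega)$ precisely because the series is finite, and then verify the adjoint identity by testing against the basis $\{f_m\}$: for $m\geq1$ one gets $\langle f_m,Q^{*}\psi\rangle_{\partial\Omega}=(Qf_m,\psi)_{L^{2}(\partial\Omega)}=\langle f_m,\ell\rangle_{\partial\Omega}$, and the representation \eqref{eqn:spectral-ell} then identifies $Q^{*}\psi$ with $\ell$.

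The main obstacle is the one-dimensional kernel, i.e. the index $n=0$ with $\lambda_0=0$ and $f_0\in{\rm span}\{1\}$, which is exactly what distinguishes this setting from the standard factorization method. Since $Qf_0=0$, every element of ${\rm Range}(Q^{*})$ annihilates $f_0$, so in the converse step the computation only recovers $\ell$ up to its $f_0$-component, namely $Q^{*}\psi=\ell-\langle f_0,\ell\rangle_{\partial\Omega}\,\ell_0$. The argument is then closed by recording that ${\rm Range}(Q^{*})={\rm Range}\big((PS)^{*}\big)\subseteq{\rm span}\{1\}^{\perp}$, established in Theorems \ref{mean-zero-range} and \ref{adj_s_1} via $\int_{\partial\Omega}S^{*}(Pg)\,ds=0$, so that the functionals $\ell$ under consideration satisfy $\langle f_0,\ell\rangle_{\partial\Omega}=0$; with this the kernel term drops and $Q^{*}\psi=\ell$, completing the characterization. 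I expect verifying that the relevant data $\ell$ lie in ${\rm span}\{1\}^{\perp}$, and thereby reconciling the projection $P$ with the truncated Picard sum over $n\geq1$, to be the delicate bookkeeping point rather than the inequalities themselves.
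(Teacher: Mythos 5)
Your proposal is correct and follows essentially the same route as the paper: reduce to characterizing ${\rm Range}(Q^{*})$ via Theorem \ref{range-adjs-q-s}, then apply a Picard-type criterion in the spectral basis of $(\Lambda-\Lambda_0)$. The only structural difference is the direction of construction: the paper defines $Q^{*}$ explicitly by $Q^{*}\phi=\sum_{n\geq 1}\sqrt{\lambda_n}\,(\phi,\phi_n)_{L^{2}(\partial\Omega)}\,\ell_n$ and verifies $Q^{*}Q=\Lambda-\Lambda_0$ by checking agreement on the basis $\{f_n\}$, whereas you start from an arbitrary square-root factorization and extract its singular system $u_n=\lambda_n^{-1/2}Qf_n$; these are interchangeable, and your identity $(Qf_m,Qf_n)_{L^{2}(\partial\Omega)}=\lambda_n\delta_{mn}$ is exactly what makes the paper's explicit $Q$ work. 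Where you genuinely improve on the paper is the $n=0$ bookkeeping: as you observe, the Picard sum over $n\geq 1$ says nothing about the $f_0$-component of $\ell$, so the ``if'' direction can only produce $\ell-\langle f_0,\ell\rangle_{\partial\Omega}\,\ell_0\in{\rm Range}(Q^{*})$. Indeed, taking $\ell=\ell_0$ shows the equivalence as literally stated fails: the sum is finite (all terms vanish) yet $\ell_0\notin{\rm Range}\big((PS)^{*}\big)\subseteq{\rm span}\{1\}^{\perp}$. The paper's proof glosses over precisely this point (the assertion that ``$Q^{*}\phi=\ell$ is equivalent to'' finiteness of the sum), so your restriction to mean-zero $\ell$ is the correct repair, and it is consistent with the only use of the theorem, namely Theorem \ref{main-thm}, where $\ell$ is the mean-zero-corrected Green's function data. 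One caution: state the mean-zero condition as a hypothesis on $\ell$ rather than deducing it from $\ell\in{\rm Range}\big((PS)^{*}\big)$, since in the ``if'' direction that membership is what you are trying to prove and the deduction would be circular.
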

\begin{proof}
    By the assumptions on $T$ and $S$, $(\Lambda - \Lambda_0)$ is a positive, compact operator. Thus, $(\Lambda - \Lambda_0)$ has the spectral decomposition as described in \eqref{eqn:spectral-A} where $\{f_n\}_{n\in\mathbb{N}\cup \{0\}}$ is an orthonormal basis of $H^{1/2}(\partial \Omega)$. We proceed by defining the bounded, linear operator $Q^{*}:L^{2}(\partial\Omega) \rightarrow H^{1/2}(\partial \Omega)$ such that 
    \[Q^{*}\phi = \sum_{n=1} \sqrt{\lambda_n} (\phi , \phi_n)_{L^{2}(\partial \Omega)} \ell_n \quad \text{for all}\enspace \phi \in L^{2}(\partial \Omega).\]
    Note that ${\rm Null}(Q^*) = {\rm span} \{1\}$. We define the adjoint $Q:H^{1/2}(\partial \Omega) \rightarrow L^{2}(\partial \Omega)$ by the following equality
    \[(Qf , \phi)_{L^{2}(\partial \Omega)} = \langle f , Q^{*}\phi \rangle_{\partial \Omega} \quad \text{for all} \enspace \phi \in L^{2}(\partial \Omega) \enspace \text{and} \enspace f \in H^{1/2}(\partial \Omega).\]
    Thus, for any $n \in \mathbb{N} \cup \{0\}$, we have that
    \[(Qf , \phi_n)_{L^{2}(\partial \Omega)} = \langle f , Q^{*}\phi_n \rangle_{\partial \Omega} = \sqrt{\lambda_n} \langle f , \ell_n \rangle_{\partial \Omega}, \]
    where we used the definition of $Q^{*}$ and the fact that $\{\phi_n\}_{n\in\mathbb{N}\cup\{0\}}$ is an orthonormal set in $L^{2}(\partial \Omega)$. It follows that 
    \[Qf = \sum_{n=1} \sqrt{\lambda_n} \langle f , \ell_n \rangle_{\partial \Omega} \, \phi_n \quad \text{for all} \enspace f \in H^{1/2}(\partial \Omega).\]
    Therefore, by \eqref{eqn:basis-duality}, it holds that $Q^{*}\phi_n = \sqrt{\lambda_n} \ell_n$ and $Qf_n = \sqrt{\lambda_n} \phi_n$ for any $n \in \mathbb{N} \cup \{0\}$. Thus, $Q^{*}Q f_n = \lambda_n \ell_n$ for all $n \in \mathbb{N} \cup \{0\}$. Notice that by \eqref{eqn:basis-duality} and \eqref{eqn:spectral-A}, we have that 
    \[(\Lambda - \Lambda_0)f_n = \lambda_n \ell_n = Q^{*}Qf_n \quad \text{for all} \enspace n \in \mathbb{N} \cup \{0\},\]
    which implies that $(\Lambda - \Lambda_0) = Q^{*}Q$ since they agree on a basis. Therefore, by Theorem \ref{range-adjs-q-s}, we have that ${\rm Range}((PS)^{*}) = {\rm Range}(Q^{*})$.
    
    Now that we have established this equality, suppose that $\ell \in {\rm Range} ((PS)^{*}) = {\rm Range} (Q^{*})$. This is equivalent to the existence of $\phi \in L^{2}(\partial \Omega)$ such that $Q^{*}\phi = \ell$. By the definition of $Q^{*}$ and \eqref{eqn:spectral-ell}, we have that 
    \[\langle f_n , \ell \rangle_{\partial \Omega} = \sqrt{\lambda_n} (\phi , \phi_n)_{L^{2}(\partial \Omega)} \quad \text{for all} \enspace n \in \mathbb{N} \cup \{0\}. \]
    Since $\{ \phi_n\}_{n\in\mathbb{N} \cup \{0\}}$ is an orthonormal basis of $L^{2}(\partial \Omega)$, it holds that $Q^{*}\phi = \ell$ is equivalent to
    \[\sum_{n\geq1} \frac{1}{\lambda_n} \rvert \langle f_n , \ell \rangle_{\partial \Omega} \rvert^2 \leq \| \phi\|^{2}_{L^{2}(\partial \Omega)} < \infty, \]
    proving the claim.
\end{proof}
We remark that if $\ell \in {\rm Range}(\Lambda - \Lambda_0)$, then we have that $(\Lambda - \Lambda_0)f=\ell$ for some $f \in H^{1/2}(\partial \Omega)$. Thus, by \eqref{eqn:spectral-ell} and \eqref{eqn:spectral-A}, we obtain that
\[f  = \langle f,\ell_0 \rangle_{\partial \Omega} \, f_0+ \sum_{n\geq1} \frac{1}{\lambda_n} \langle f_n, \ell \rangle_{\partial \Omega} \, f_n.\]
Since $(\Lambda - \Lambda_0)$ is positive and compact, we have that $\lambda_n >0$ for all $n \geq 1$ such that $\{\lambda_n\}_{n \in \mathbb{N}} \rightarrow 0$ as $n\rightarrow \infty$. Therefore, we define $f_{\alpha}$ to be the regularized solution of $(\Lambda - \Lambda_0)f = \ell$, which is given to be
\begin{equation}\label{eqn:regularized-xn}
f_{\alpha} = \langle f,\ell_0 \rangle_{\partial \Omega} \, f_0 + \sum_{n\geq1} \frac{h_{\alpha}(\lambda_n)}{\lambda_n} \langle f_n, \ell \rangle_{\partial \Omega} \, f_n.
\end{equation}
Here, $h_{\alpha}$ denotes the filter function associated with a specific regularization scheme. Also, we used the fact that $\{\lambda_n ; f_n; \ell_n\}_{n \in \mathbb{N}\cup\{0\}}\in\mathbb{R}_{\geq 0} \times H^{1/2}(\partial \Omega) \times H^{-1/2}(\partial \Omega)$ is the singular system for $(\Lambda - \Lambda_0)$. For all $\alpha >0$, the filter functions $h_{\alpha}:(0,\lambda_1] \rightarrow \mathbb{R}_{\geq 0}$ form a family of functions such that for all $0 < t \leq \lambda_1$
\[\lim_{\alpha \rightarrow 0} h_{\alpha}(t) = 1 \quad \text{and} \quad h_{\alpha} (t) \leq C_{\text{Reg}} \enspace \text{for all} \enspace \alpha >0\]
where $C_{\text{Reg}} > 0$ is a constant. Furthermore, note that $\lambda_1$ corresponds to the largest spectral valued defined in \eqref{eqn:spectral-JA}. The following result connects Range$((PS)^{*})$ to the sequence $\langle f_{\alpha} , (\Lambda - \Lambda_0)f_{\alpha} \rangle_{\partial \Omega}$.
\begin{theorem}\label{thm:ps-range2}
    Let $P$ be the orthogonal projection onto ${\rm span}\{1\}^{\perp}$ and $(\Lambda - \Lambda_0) : H^{1/2}(\partial \Omega) \rightarrow H^{-1/2}(\partial \Omega)$ be a positive operator with the factorization $(\Lambda - \Lambda_0)=(PS)^{*}TPS$ such that $S:H^{1/2}(\partial \Omega) \rightarrow H^{7/2}(\partial \Omega)$ and $T:H^{7/2}(\partial D) \rightarrow H^{-7/2}(\partial D)$ are bounded linear operators. Assume that $S$ is compact and injective as well as $T$ being coercive on ${\rm Range} (PS)$. Then we have that
    \[\ell \in {\rm Range}((PS)^{*}) \quad \text{if and only if} \quad \liminf_{\alpha \rightarrow 0} \langle f_{\alpha} , (\Lambda - \Lambda_0)f_{\alpha}\rangle_{\partial \Omega} < \infty \]
where $ f_{\alpha}$ is the regularized solution to $(\Lambda - \Lambda_0)f=\ell$.
\end{theorem}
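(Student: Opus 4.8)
The plan is to reduce this statement to the Picard-type series criterion already established in Theorem \ref{thm:ps-range1}, namely that $\ell \in {\rm Range}((PS)^*)$ if and only if $\sum_{n\geq 1}\lambda_n^{-1}|\langle f_n,\ell\rangle_{\partial\Omega}|^2 < \infty$. The bridge between the two formulations is an explicit series expression for the quadratic form $\langle f_\alpha, (\Lambda-\Lambda_0)f_\alpha\rangle_{\partial\Omega}$ in terms of the singular system $\{\lambda_n; f_n; \ell_n\}$ and the filter function $h_\alpha$.

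First I would compute this quadratic form. Writing $c_n := \langle f_n,\ell\rangle_{\partial\Omega}$ and using the regularized solution \eqref{eqn:regularized-xn}, the spectral decomposition \eqref{eqn:spectral-A}, and the biorthogonality \eqref{eqn:basis-duality} $\langle f_m,\ell_n\rangle_{\partial\Omega}=\delta_{mn}$, I note that the component of $f_\alpha$ corresponding to $\lambda_0 = 0$ lies in ${\rm Null}(\Lambda-\Lambda_0)={\rm span}\{1\}$ and is therefore annihilated by $(\Lambda-\Lambda_0)$, making no contribution to the form. Applying $(\Lambda-\Lambda_0)$ to the remaining part of $f_\alpha$ produces $(\Lambda-\Lambda_0)f_\alpha = \sum_{n\geq1} h_\alpha(\lambda_n)\,c_n\,\ell_n$, and pairing against $f_\alpha$ yields the real, nonnegative quantity
\[
\langle f_\alpha, (\Lambda-\Lambda_0)f_\alpha\rangle_{\partial\Omega} = \sum_{n\geq1} \frac{h_\alpha(\lambda_n)^2}{\lambda_n}\,|c_n|^2.
\]

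With this identity in hand, both implications follow from the limiting properties of the filter functions, $h_\alpha(t)\to 1$ as $\alpha\to0$ and $0\leq h_\alpha(t)\leq C_{\text{Reg}}$. For the forward direction, if $\ell \in {\rm Range}((PS)^*)$ then Theorem \ref{thm:ps-range1} gives $\sum_{n\geq1}\lambda_n^{-1}|c_n|^2<\infty$; since $h_\alpha(\lambda_n)^2\lambda_n^{-1}|c_n|^2 \leq C_{\text{Reg}}^2\,\lambda_n^{-1}|c_n|^2$ uniformly in $\alpha$, dominated convergence yields $\langle f_\alpha,(\Lambda-\Lambda_0)f_\alpha\rangle_{\partial\Omega}\to \sum_{n\geq1}\lambda_n^{-1}|c_n|^2<\infty$, so the $\liminf$ is certainly finite. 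For the converse, suppose the $\liminf$ equals some finite $M$ and pick $\alpha_k\to0$ along which the forms are bounded by $M+1$. For each fixed truncation level $N$, the partial sum $\sum_{n=1}^N h_{\alpha_k}(\lambda_n)^2\lambda_n^{-1}|c_n|^2$ is dominated by the full sum and hence bounded by $M+1$; letting $k\to\infty$ and using $h_{\alpha_k}(\lambda_n)\to1$ on the finite index set gives $\sum_{n=1}^N \lambda_n^{-1}|c_n|^2 \leq M+1$. Since $N$ is arbitrary, the full series converges, and Theorem \ref{thm:ps-range1} places $\ell$ in ${\rm Range}((PS)^*)$.

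I expect the only genuinely delicate points to be bookkeeping rather than conceptual: verifying that the $\lambda_0=0$ component of $f_\alpha$ drops out of the quadratic form, and justifying the two passages to the limit (dominated convergence in the forward direction, a finite-truncation Fatou-type argument in the backward direction) given that the series are indexed over the strictly positive part of the spectrum. The coercivity of $T$ on ${\rm Range}(PS)$ together with the injectivity and compactness of $S$ enter only indirectly, through Theorem \ref{thm:ps-range1} and the identity ${\rm Range}(Q^*)={\rm Range}((PS)^*)$, so no new operator-theoretic input is required at this stage.
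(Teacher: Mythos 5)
Your proof is correct and takes essentially the same route as the paper's: both derive the identity $\langle f_\alpha , (\Lambda-\Lambda_0)f_\alpha\rangle_{\partial\Omega} = \sum_{n\geq 1} h_\alpha(\lambda_n)^2\,\lambda_n^{-1}\,|\langle f_n,\ell\rangle_{\partial\Omega}|^2$ from the singular system and the biorthogonality \eqref{eqn:basis-duality}, then exploit the uniform bound $h_\alpha \leq C_{\text{Reg}}$ in one direction and the pointwise limit $h_\alpha(\lambda_n)\to 1$ on finite truncations in the other, concluding via Theorem \ref{thm:ps-range1}. Your packaging as two separate implications (with dominated convergence for the forward one) is only a cosmetic variant of the paper's two-sided inequality sandwich between the Picard series and $\liminf_{\alpha\to 0}\langle f_\alpha,(\Lambda-\Lambda_0)f_\alpha\rangle_{\partial\Omega}$.
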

\begin{proof}
    $(\Lambda - \Lambda_0)$ is positive and compact such that ${\rm Null}(\Lambda -\Lambda_0) = span \{1\}$. Then, $(\Lambda - \Lambda_0)$ has the spectral decomposition \eqref{eqn:spectral-A}, where $(\Lambda - \Lambda_0)f_n = \lambda_n \ell_n$ for all $n \in \mathbb{N} \cup \{0\}$. Suppose $f_{\alpha}$ is the regularized solution to $(\Lambda - \Lambda_0) f = \ell$. From \eqref{eqn:spectral-A} and \ref{eqn:regularized-xn}, we have that 
    \[(\Lambda - \Lambda_0) f_{\alpha} = \sum_{n \geq 1} h_{\alpha}(\lambda_n) \langle f_n , \ell \rangle_{\partial \Omega} \, \ell_n\,\]
    where we also used the fact that ${\rm Null}(\Lambda -\Lambda_0) = \text{span} \{1\}$. Thus, we have that
    \begin{align*}
        \langle f_{\alpha} , (\Lambda - \Lambda_0) f_{\alpha} \rangle_{\partial \Omega} &= \big\langle \langle f,\ell_0 \rangle_{\partial \Omega} \, f_0 + \sum_{n\geq 1} \frac{h_{\alpha}(\lambda_n)}{\lambda_n} \langle f_n, \ell \rangle_{\partial \Omega} \, f_n, \sum_{m\geq 1} h_{\alpha}(\lambda_m) \langle f_m , \ell \rangle_{\partial \Omega} \, \ell_m \big\rangle_{\partial \Omega} \\
        &= \Big\langle \sum_{n\geq 1} \frac{h_{\alpha}(\lambda_n)}{\lambda_n} \langle f_n, \ell \rangle_{\partial \Omega} \, f_n, \sum_{m\geq 1} h_{\alpha}(\lambda_m)\langle f_m , \ell \rangle_{\partial \Omega} \, \ell_m \Big\rangle_{\partial \Omega} \\
        &= \sum_{n\geq 1} \bigg[ \frac{h_{\alpha}(\lambda_n)}{\lambda_n} \langle f_n , \ell \rangle_{\partial \Omega} \sum_{m \geq 1} h_{\alpha} (\lambda_m) \langle f_n , \ell \rangle_{\partial \Omega}  \langle f_n , \ell_m \rangle_{\partial \Omega} \bigg]\\
        &= \sum_{n\geq 1} \frac{h^2_{\alpha}(\lambda_n)}{\lambda_n} \rvert \langle f_n , \ell \rangle_{\partial \Omega}\rvert^2 
    \end{align*}
    by the sesquilinear definition of the dual-product $\langle \cdot , \cdot \rangle_{\partial \Omega}$, as well as the duality of bases specified in \eqref{eqn:basis-duality}. From the above equality, we now provide limiting bounds to $\langle f_{\alpha},(\Lambda - \Lambda_0)f_{\alpha}) \rangle_{\partial \Omega}$, which will allow us to invoke Theorem \ref{thm:ps-range1}. To this end, recall that $\{h_{\alpha}\}_{\alpha>0}$ is a family of filter functions which is uniformly bounded by 1. Thus we obtain the upper bound
    \[\langle f_{\alpha},(\Lambda - \Lambda_0)f_{\alpha}) \rangle_{\partial \Omega} \leq C^2_{\text{Reg}} \sum_{n\geq 1} \frac{1}{\lambda_n} \rvert \langle f_n , \ell \rangle_{\partial \Omega}\rvert^2.  \]
    For a lower bound, note that for any $N \in \mathbb{N}$, it holds that
    \[\langle f_{\alpha},(\Lambda - \Lambda_0)f_{\alpha}) \rangle_{\partial \Omega} \geq \sum^{N}_{n=1} \frac{h^2_{\alpha}(\lambda_n)}{\lambda_n} \rvert \langle f_n , \ell \rangle_{\partial \Omega}\rvert^2.\]
    Note that for any $1 \leq n \leq N$, the filter functions satisfy $h_{\alpha}(\lambda_n) \rightarrow 1$ as $\alpha \rightarrow 0$. Therefore, we obtain the lower bound
    \[\liminf_{\alpha \rightarrow 0} \langle f_{\alpha},(\Lambda - \Lambda_0)f_{\alpha}) \rangle_{\partial \Omega} \geq \sum^{N}_{n=1} \frac{1}{\lambda_n} \rvert \langle f_n , \ell \rangle_{\partial \Omega}\rvert^2.\]
    By taking $N \rightarrow \infty$, we obtain the inequalities
    \begin{align*}
        \sum_{n\geq 1} \frac{1}{\lambda_n} \rvert \langle f_n , \ell \rangle_{\partial \Omega}\rvert^2 \leq \liminf_{\alpha \rightarrow 0} \langle f_{\alpha},(\Lambda - \Lambda_0)f_{\alpha}) \rangle_{\partial \Omega} \leq C^2_{\text{Reg}} \sum_{n\geq 1} \frac{1}{\lambda_n} \rvert \langle f_n , \ell \rangle_{\partial \Omega}\rvert^2.
    \end{align*}
    The above inequalities and Theorem \ref{thm:ps-range1} proves the claim.
\end{proof}

\section{Kernel function on a circular region}\label{appendix-kernel}

Here we provide the details on the calculation for the kernel function defined in \eqref{kernel-function}. In polar coordinates, $\partial D$ is given by $\rho (\text{cos} ( \theta ), \text{sin} (\theta))$ for some constant $\rho \in (0,1)$. As similarly demonstrated in \cite{granados1}, since $\Omega$ is taken to be the unit disk in $\mathbb{R}^2$, we make the ansatz that $u(r,\theta)$ has the following series representation
\begin{equation}\label{u-annulus}
u(r , \theta) = a_0 + b_0 \, \text{ln} \, r + \sum_{|n|=1}^{\infty} \left[ a_n r^{|n|} + b_n r^{-|n|} \right] \text{e}^{\text{i}n \theta} \quad \text{in} \quad \Omega \backslash D
\end{equation}
whereas
$$u(r , \theta) = c_0 + \sum_{|n|=1}^{\infty} c_n r^{|n|} \text{e}^{\text{i}n \theta} \quad \text{in} \quad D.$$
Note that $u(r, \theta )$ is harmonic in both the annular and circular regions which are separated by the interior boundary $\partial D$. Given that the coefficients $\mu , \mu_s ,$ and $\ell_s$ are constant, we are able to determine the Fourier coefficients $a_n$ and $b_n$ by using the boundary conditions at $r=1$ and $r = \rho$ given by 
\[u(1,\theta ) = f (\theta), \quad u^{+}(\rho , \theta) = u^{-}(\rho , \theta)\]
\[\text{and} \quad \partial_{r}u^{+}(\rho , \theta) - \mu \partial_{r}u^{-}(\rho , \theta) = \frac{\mu_s}{\rho^2} \bigg(\frac{\ell_s^2}{\rho^2} \frac{\partial^{4}}{\partial \theta^{4}} - \frac{\partial^2}{\partial\theta^2} \bigg) u(\rho , \theta).\]
We let $f_n$ for $n \in \mathbb{Z}$ denote the Fourier coefficients for the prescribed body displacement $f$. Note, that the Dirichlet boundary
condition at $r=1$ above gives that
\[a_0 = f_0 \quad \text{and} \quad a_n + b_n = f_n \enspace \text{for all} \enspace n \neq 0.\]
From the first boundary condition at $r = \rho$, we have that
\[ b_0 = \frac{c_0 - f_0}{\text{ln}\,r} \quad \text{and} \quad b_n = \rho^{2|n|} (c_n - a_n).\]
From this, we conclude that $b_0 = 0$, which implies that $c_0 = f_0$. For $n \neq 0$, by defining \[ a_n= {\tilde a}_n f_n, \quad b_n=  {\tilde b}_n  f_n, \quad \text{and} \quad c_n =  {\tilde c}_n f_n \] we can set up a system of equations for $a_n, b_n$, and $c_n$ as follows:
\begin{align}
    \label{eq:DtN1}
  &   {\tilde a}_n + {\tilde b}_n = 1, \\    \label{eq:DtN2}
  & \rho^{2 |n|} {\tilde a}_n + {\tilde b}_n - \rho^{2 |n|} {\tilde c}_n = 0, \\
      \label{eq:DtN3}
  & |n| \rho^{|n|-1} {\tilde a}_n - |n| \rho^{-|n| - 1} {\tilde b}_n - \left[  \mu |n| \rho^{|n|-1} + \rho^{|n|-2} \left( \mu_s |n|^2 + \frac{1}{\rho^2} \mu_s \ell_s^2 |n|^4 \right) \right] {\tilde c}_n = 0.
\end{align}
We define
\[
\sigma_n = {\tilde a}_n- {\tilde b}_n \enspace \text{for} \enspace n \neq 0.
\]
Interchanging summation with integration gives us that 
\[ (\Lambda - \Lambda_0) f = \frac{1}{2 \pi} \int_0^{2\pi} K(\theta , \phi) f(\phi) \, \text{d}\phi \quad \text{where} \quad K(\theta , \phi) = \sum_{|n|=1}^{\infty}|n|(\sigma_n - 1)e^{\text{i}n(\theta - \phi)}.\]
Thus, in \eqref{kernel-function}, $\kappa_0 = 0$ and $\kappa_n = |n| (\sigma_n -1)$ for all $n \in \mathbb{Z}\setminus\{0\}$.
From solving the system \eqref{eq:DtN1}--\eqref{eq:DtN3}, we determine the coefficients of the DtN kernel operator to be given by
\begin{align}
\kappa_n & = |n| \frac{2 \rho^{2|n|} (\rho^3 (\mu-1) |n| + \rho^2 \mu_s |n|^2 + \mu_s \ell_s^2 |n|^4)}{\rho^3 |n| (\mu +1) + \rho^2 \mu_s |n|^2 + \mu_s \ell_s^2 |n|^4 - \rho^{2 |n|} \left( \rho^3 (\mu-1) |n| + \rho^2 \mu_s |n|^2 + \mu_s \ell_s^2 |n|^4  \right)}. \label{eq:Kdef}
\end{align}

\vspace{5mm}
\noindent {\bf \large Data availability statement:} Data sharing not applicable to this article as no datasets were generated or analyzed for this current study.

\vspace{5mm}
\noindent {\bf \large Conflict of interest:} The authors have no competing interests to declare that are relevant to the content of this article. 

\bibliographystyle{abbrv}
\bibliography{refs, researchbibmech}

\end{document}